\newtheorem{proposition}{Proposition}
\newtheorem{theorem}[proposition]{Theorem}
\newtheorem{corollary}[proposition]{Corollary}
\newtheorem{lemma}[proposition]{Lemma}
\newtheorem{remark}[proposition]{Remark}
\def\ifm#1#2{\relax \ifmmode#1\else#2\fi}
\newcommand{\N}{\mathbb N}
\newcommand{\R}{\mathbb R}
\newcommand{\Rpos}{\mathbb{R}_{\!>0}}
\newcommand{\klk}    {\ifm {,\ldots,} {$,\ldots,$}}
\newcommand{\spar} {\vskip 0.25cm}
\begin{document}

\title[]{Efficient approximation of the solution of certain nonlinear reaction--diffusion equation II:\\ the case of large absorption}

\author[E. Dratman]{Ezequiel Dratman${}^{1}$}%

\address{${}^{1}$Instituto de Ciencias,
Universidad Nacional de Gene\-ral Sarmiento, Juan M. Gu\-ti\'e\-rrez
1150 (B1613GSX) Los Polvorines, Buenos Aires, Argentina.}
\email{edratman@ungs.edu.ar}
\urladdr{http://sites.google.com/site/ezequieldratman}

\thanks{Research was partially supported by the
following grants: UNGS 30/3084, UNGS 30/1066, CIC (2007--2009) and PIP 11220090100421.}

\subjclass{65H10, 65L10, 65L12, 65H20, 65Y20}%
\keywords{Two-point boundary-value problem, finite differences,
Neumann boundary condition, stationary solution, homotopy
continuation, polynomial system solving, condition number, complexity.}%

\maketitle
\date{\today}
%
%
\begin{abstract}
We study the positive stationary solutions of a standard finite-difference discretization of the semilinear heat equation with nonlinear Neumann boundary conditions. We prove that, if the absorption is large enough, compared with the flux in the boundary, there exists a unique solution of such a discretization, which approximates the unique positive stationary solution of the ``continuous'' equation. Furthermore, we exhibit an algorithm computing an $\varepsilon$-approximation of such a solution by means of a homotopy continuation method. The cost of our
algorithm is {\em linear} in the number of nodes involved in the discretization and the logarithm of the number of digits of approximation required.
\end{abstract}
\maketitle

%
\section{Introduction}\label{seccion: intro}
This article deals with the following semilinear heat equation with
Neumann boundary conditions:
\begin{equation}\label{ec: heat equation}
\left\{\begin{array}{rclcl}
  u_t & = & u_{xx}- g_1(u) & \quad & \mbox{in }(0,1)\times[0,T), \\
  u_x(1,t) & = & \alpha g_2\big(u(1,t)\big) & \quad & \mbox{in }[0,T), \\
  u_x(0,t) & = & 0 & \quad & \mbox{in }[0,T), \\
  u(x,0) & = & u_0(x)\ge 0 & \quad & \mbox{in }[0,1],
\end{array}\right.
\end{equation}
where $g_1,g_2 \in \mathcal{C}^3(\R)$ are analytic functions in $x=0$ and $\alpha$ is a positive constant. The nonlinear heat equation models many physical, biological and engineering phenomena, such as heat conduction (see, e.g., \cite[\S 20.3]{Cannon84}, \cite[\S 1.1]{Pao92}), chemical reactions and combustion (see, e.g., \cite[\S 5.5]{BeEb89}, \cite[\S 1.7]{Grindrod96}), growth and migration of populations (see, e.g., \cite[Chapter 13]{Murray02}, \cite[\S
1.1]{Pao92}), etc. In particular, ``power-law'' nonlinearities have long been of interest as a tractable prototype of general polynomial nonlinearities (see, e.g., \cite[\S 5.5]{BeEb89}, \cite[Chapter 7]{GiKe04}, \cite{Levine90}, \cite{SaGaKuMi95}, \cite[\S 1.1]{Pao92}).

The long-time behavior of the solutions of (\ref{ec: heat equation}) has been intensively studied (see, e.g., \cite{ChFiQu91}, \cite{LoMaWo93}, \cite{Quittner93}, \cite{Rossi98}, \cite{FeRo01}, \cite{RoTa01}, \cite{AnMaToRo02}, \cite{ChQu04} and the references therein). In order to describe the dynamic behavior of the solutions of (\ref{ec: heat equation}) it is usually necessary to analyze the behavior of the corresponding {\em stationary solutions} (see, e.g.,
\cite{FeRo01}, \cite{ChFiQu91}), i.e., the positive solutions of the following two-point boundary-value problem:
\begin{equation}\label{ec: heat equation - stationary}
\left\{\begin{array}{rclcl}
  u_{xx} & = & g_1(u) & \quad & \mbox{in }(0,1), \\
  u_x(1) & = & \alpha g_2\big(u(1)\big), & \quad & \\
  u_x(0) & = & 0. & \\
\end{array}\right.
\end{equation}

The usual numerical approach to the solution of (\ref{ec: heat equation}) consists of considering a second-order finite-difference discretization in the variable $x$, with a uniform mesh, keeping the variable $t$ continuous (see, e.g., \cite{BaBr98}). This semi-discretization in space leads to the following initial-value problem:
\begin{equation}\label{ec: heat equation - space discretiz}
\!\!\! \left
 \{\begin{array}{rcll}
 u_1'& = & \frac{2}{h^{2}}(u_2-u_1) - g_1(u_1), &  \\[1ex]
 u_k'& = & \frac{1}{h^2}(u_{k+1}-2u_k+u_{k-1})-g_1(u_k), \quad
 (2\le k\le n-1) \\[1ex]
 u_n' &\! = &\! \frac{2}{h^2}(u_{n-1}-u_n) - g_1(u_n) + \frac{2 \alpha}{h}g_2(u_n),
 \\[1ex]
 u_k(0) &\! = &\! u_0(x_k), \qquad  (1\le k\le n)
\end{array}\right.
\end{equation}
where $h:=1/(n-1)$ and $x_1,\dots,x_n$ define a uniform partition of the interval $[0, 1]$.
A similar analysis to that in \cite{DrMa09} shows the convergence of the positive solutions of (\ref{ec: heat equation
- space discretiz}) to those of (\ref{ec: heat equation}) and proves that every bounded solution of (\ref{ec: heat equation - space discretiz}) tends to a stationary solution of (\ref{ec: heat equation - space discretiz}), namely to a solution of
\begin{equation}\label{ec: heat equation - stationary space discretiz}
\left\{\begin{array}{rclcl}
  0& = & \frac{2}{h^2}(u_2-u_1) - g_1(u_1), & \quad & \\[1ex]
  0& = & \frac{1}{h^2}(u_{k+1}-2u_k+u_{k-1}) - g_1(u_k),
  \quad (2\le k\le n-1)\ \ \\[1ex]
  0 & = & \frac{2}{h^2}(u_{n-1}-u_n) - g_1(u_n) + \frac{2\alpha}{h} g_2(u_n).
 \\
\end{array}\right.
\end{equation}
Hence, the dynamic behavior of the positive solutions of (\ref{ec: heat equation - space discretiz}) is rather determined by the set of solutions $(u_1\klk u_n)\in(\Rpos)^n$ of (\ref{ec: heat equation - stationary space discretiz}).

Very little is known concerning the study of the stationary solutions of (\ref{ec: heat equation - space discretiz}) and the comparison between the stationary solutions of (\ref{ec: heat equation - space discretiz}) and (\ref{ec: heat equation}). In \cite{FeRo01}, \cite{DrMa09} and \cite{Dratman10} there is a complete study of the positive solutions of (\ref{ec: heat equation - stationary space discretiz}) for the particular case $g_1(x):=x^p$ and $g_2(x):=x^q$, i.e., a complete study of the positive solutions of
\begin{equation}\label{ec: heat equation - stationary space discretiz - monomial}
\left\{\begin{array}{rclcl}
  0& = & \frac{2}{h^2}(u_2-u_1) - u_1^p, & \quad & \\[1ex]
  0& = & \frac{1}{h^2}(u_{k+1}-2u_k+u_{k-1}) - u_k^p,
  \quad (2\le k\le n-1)\ \ \\[1ex]
  0 & = & \frac{2}{h^2}(u_{n-1}-u_n) - u_n^p + \frac{2\alpha}{h} u_n^q.
 \\
\end{array}\right.
\end{equation}
In \cite{FeRo01} it is shown that there are spurious solutions of (\ref{ec: heat equation - stationary space discretiz}) for $q < p <2q-1$, that is, positive solutions of (\ref{ec: heat equation - stationary space discretiz}) not converging to any solution of (\ref{ec: heat equation - stationary}) as the mesh size $h$ tends to zero.

In \cite{DrMa09} and \cite{Dratman10} there is a complete study of (\ref{ec: heat equation - stationary space discretiz}) for $p > 2q-1$ and $p<q$. In these articles it is shown that in such cases there exists exactly one positive real solution. Furthermore, a numeric algorithm solving a given instance of the problem under consideration with $n^{O(1)}$ operations is proposed. In particular, the algorithm of \cite{Dratman10} has linear cost in $n$, that is, this algorithm gives a numerical approximation of the desired solution with $O(n)$ operations.

We observe that the family of systems (\ref{ec: heat equation - stationary space discretiz - monomial}) has typically an exponential number $O(p^n)$ of {\em complex} solutions (\cite{DeDrMa05}), and hence it is ill conditioned from the point of view of its solution by the so-called robust universal algorithms (cf. \cite{Pardo00}, \cite{CaGiHeMaPa03}, \cite{DrMaWa09}). An example of such algorithms is that of general continuation methods (see, e.g., \cite{AlGe90}). This shows the need of algorithms specifically designed to compute positive solutions of ``structured'' systems like (\ref{ec: heat equation - stationary space discretiz}).

Continuation methods aimed at approximating the real solutions of nonlinear systems arising from a discretization of two-point boundary-value problems for second-order ordinary differential equations have been considered in the literature (see, e.g., \cite{AlBaSoWa06}, \cite{Duvallet90}, \cite{Watson80}). These works are usually concerned with Dirichlet problems involving an equation of the form $u_{xx}=f(x,u,u_x)$ for which the existence and
uniqueness of solutions is known. Further, they focus on the existence of a suitable homotopy path rather on the cost of the underlying continuation algorithm. As a consequence, they do not seem to be suitable for the solution of (\ref{ec: heat equation - stationary space discretiz}). On the other hand, it is worth mentioning the analysis of \cite{Kacewicz02} on the complexity of shooting methods for two-point boundary value problems.

Let $g_1,g_2 \in \mathcal{C}^3(\R)$ be analytic functions in $x=0$ such that $g_i(0)=0$, $g_i'(x) > 0$, $g_i''(x) > 0$ and $g_i'''(x) \ge 0$ for all $x>0$ with $i=1,2$. We observe that $g_1$ and $g_2$ are a wide generalization of the monomial functions of system (\ref{ec: heat equation - stationary space discretiz - monomial}). Moreover, we shall assume throughout the paper that the functions $g := g_1/g_2$ and $G := G_1/g_2^2$ are strictly increasing, where $G_1$ is the primitive function of $g_1$ such that $G_1(0)=0$, generalizing thus the relation $2q-1 < p$ in (\ref{ec: heat equation - stationary space discretiz - monomial}). In this article we study the existence and uniqueness of the positive solutions of (\ref{ec: heat equation - stationary space discretiz}), and we obtain numerical approximations of these solutions using homotopy methods. In \cite{Dratman11a} there is a complete study of (\ref{ec: heat equation - stationary space discretiz}) for $g:=g_1/g_2$ strictly decreasing. Furthermore, a similar analysis to that in \cite{FeRo01} shows that there are spurious solutions of (\ref{ec: heat equation - stationary space discretiz}) for $g$ strictly increasing and $G$ strictly decreasing; i.e., the generalization of the relations $q < p <2q-1$ in (\ref{ec: heat equation - stationary space discretiz - monomial}). According to these remarks, we have a complete outlook about the existence and uniqueness of the positive solutions of (\ref{ec: heat equation - stationary space discretiz}).

%
%
\subsection{Our contributions}
In the first part of the article we prove that (\ref{ec: heat equation - stationary space discretiz}) has a unique positive solution, and we obtain upper and lower bounds for this solution independents of $h$, generalizing the results of \cite{DrMa09}.

In the second part of the article we exhibit an algorithm which computes an $\varepsilon$-approximation of the positive solution of (\ref{ec: heat equation - stationary space discretiz}). Such an algorithm is a continuation method that tracks the positive real path determined by the smooth homotopy obtained by considering (\ref{ec: heat equation - stationary space discretiz}) as a family of systems parametrized by $\alpha$. Its cost is roughly of $n\log\log\varepsilon$ arithmetic operations, improving thus the exponential cost of general continuation methods.

The cost estimate of our algorithm is based on an analysis of the condition number of the corresponding homotopy path, which might be of independent interest. We prove that such a condition number can be bounded by a quantity independent of $h:=1/n$. This in particular implies that each member of the family of systems under consideration is significantly better conditioned than both an ``average'' dense system (see, e.g., \cite[Chapter 13, Theorem 1]{BlCuShSm98}) and an ``average'' sparse system (\cite[Theorem 1]{MaRo04}).

%
%
\subsection{Outline of the paper}

In Section \ref{subseccion: cotas} we obtain upper and lower bounds for the coordinates of the positive solution of (\ref{ec: heat equation - stationary space discretiz}).

Section \ref{subseccion: existencia_y_unicidad} is devoted to determine the number of positive solutions of (\ref{ec: heat equation - stationary space discretiz}). For this purpose, we prove that the homotopy of systems mentioned above is smooth (Theorem \ref{Teo: derivada alpha positiva}). From this result we deduce the existence and uniqueness of the positive solutions of (\ref{ec: heat equation - stationary space discretiz}).

In Section \ref{seccion: condicionamiento} we obtain estimates on the condition number of the homotopy path considered in the previous section (Theorem \ref{Teo: Cota Nro Condicion}). Such estimates are applied in Section \ref{seccion: algoritmo} in order to estimate the cost of the homotopy continuation method for computing the positive solution of
(\ref{ec: heat equation - stationary space discretiz}).

\spar
\spar

%

\section{Existence and uniqueness of stationary solutions}
\label{seccion: existencia y unicidad}
Let $U_1 \klk U_n$ be indeterminates over $\R$. Let $g_1$ and $g_2$ be two functions of class $\mathcal{C}^3(\R)$ such that $g_i(0) = 0$, $g_i'(x) > 0$, $g_i''(x) > 0$ y $g_i'''(x) \ge 0$ for all $x > 0$ with $i = 1,2$. As stated in the
introduction, we are interested in the positive solutions of (\ref{ec: heat equation - stationary space discretiz}) for a given positive value of $\alpha$, that is, in the positive solutions of the nonlinear system
\begin{equation}\label{ec: sistema_alpha}
\left\{\begin{array}{rclcl}
  0& = & -(U_2-U_1) + \frac{h^2}{2} g_1(U_1), \\[1ex]
  0& = & -(U_{k+1} - 2U_k + U_{k-1}) +  h^2 g_1(U_k),
  \quad (2\le k\le n-1)
  \\[1ex]
  0 & = & -(U_{n-1}-U_n)  \frac{h^2}{2} g_1(U_n) - h \alpha g_2(U_n),
  & \quad &  \\
\end{array}\right.
\end{equation}
for a given value $\alpha=\alpha^*>0$, where $h:={1}/(n-1)$. Observe that, as $\alpha$ runs through all
possible values in $\Rpos$, one may consider (\ref{ec: sistema_alpha})
as a family of nonlinear systems parametrized by $\alpha$, namely,
\begin{equation}\label{ec: sistema_A}
\left\{\begin{array}{rclcl}
  0& = & -(U_2-U_1) + \frac{h^2}{2} g_1(U_1), \\[1ex]
  0& = & -(U_{k+1} - 2U_k + U_{k-1}) +  h^2 g_1(U_k),
  \quad (2\le k\le n-1)
  \\[1ex]
  0 & = & -(U_{n-1}-U_n)  \frac{h^2}{2} g_1(U_n) - h A g_2(U_n),
  & \quad &  \\
\end{array}\right.
\end{equation}
where $A$ is a new indeterminate.

%
\subsection{Preliminary analysis}
Let $A,U_1\klk U_n$ be indeterminates over $\R$, set $U:=(U_1\klk
U_n)$ and denote by $F:\R^{n+1} \to \R^n$ the nonlinear map defined
by the right-hand side of (\ref{ec: sistema_A}). From the first
$n-1$ equations of (\ref{ec: sistema_A}) we easily see that, for a
given positive value $U_1=u_1$, the (positive) values of $U_2\klk
U_n,A$ are uniquely determined. Therefore, letting $U_1$ vary, we
may consider $U_2\klk U_n$, $A$ as functions of $U_1$, which are
indeed recursively defined as follows:
\begin{equation}\label{ec: relaciones recursivas}
\begin{array}{rcl}
 U_1(u_1)&:=& u_1,\\[1ex]
 U_2(u_1)&:=& u_1+\frac{h^2}{2}g_1(u_1),\\[1ex]
 U_{k+1}(u_1)&:=& 2U_k(u_1)-U_{k-1}(u_1)+h^2g_1\big(U_k(u_1)\big),\quad
(2\le k \le n-1),\\[1ex]
 A(u_1)&:=&
 \Big(\frac{1}{h}(U_n-U_{n-1})(u_1)+\frac{h}{2}g_1\big(U_n(u_1)\big)\Big)/ g_2\big(U_n(u_1)\big).
\end{array}
\end{equation}
Arguing recursively, one deduces the following lemma (cf.
\cite[Remark 20]{DeDrMa05}).
\begin{lemma}\label{Lema: positividad}
For any $u_1 > 0$, the following assertions hold:
\begin{enumerate}
\item \label{Lema: positividad - item 1}
$(U_k-U_{k-1})(u_1)=h^2\Big(\frac{1}{2} g_1(u_1)+\sum_{j=2}^{k-1}g_1\big(U_{j}(u_1)\big)\Big)
> 0$,
\item  \label{Lema: positividad - item 2} $U_k(u_1)=u_1 +
h^2\Big(\frac{k-1}{2}g_1(u_1) + \sum_{j=2}^{k-1}(k-j)g_1\big(U_{j}(u_1)\big)\Big)
> 0$,
\item  \label{Lema: positividad - item 3} $(U_k'-U_{k-1}')(u_1)
= h^2\big(\frac{1}{2}g_1'(u_1) + \sum_{j=2}^{k-1} g_1'\big(U_{j}(u_1)\big)
U_{j}'(u_1)\big) > 0$,
\item \label{Lema: positividad - item 4} $U_k'(u_1) = 1 +
h^2\big(\frac{k-1}{2}g_1'(u_1) + \sum_{j=2}^{k-1}(k-j)g_1'\big(U_{j}(u_1)\big)
U_{j}'(u_1)\big) > 1$,
\end{enumerate}
for $2\le k\le n$.
\end{lemma}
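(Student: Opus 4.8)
The plan is to prove all four assertions simultaneously by induction on $k$, exploiting the recursive definitions in (\ref{ec: relaciones recursivas}). Items (\ref{Lema: positividad - item 1}) and (\ref{Lema: positividad - item 2}) concern the values $U_k(u_1)$, and items (\ref{Lema: positividad - item 3}) and (\ref{Lema: positividad - item 4}) are their formal derivatives with respect to $u_1$; these two pairs have exactly parallel structure, so I would handle (\ref{Lema: positividad - item 1})--(\ref{Lema: positividad - item 2}) in detail and then obtain (\ref{Lema: positividad - item 3})--(\ref{Lema: positividad - item 4}) by differentiating the closed-form expressions (or by running the same induction on the derivative recursion). Note that the identities in (\ref{Lema: positividad - item 1}) and (\ref{Lema: positividad - item 2}) are equivalent: summing the telescoping differences $(U_j-U_{j-1})(u_1)$ for $j=2,\dots,k$ and using $U_1(u_1)=u_1$ yields the formula for $U_k(u_1)$, so it suffices to establish one of them.

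For the base cases, at $k=2$ the recursion gives $U_2(u_1)-U_1(u_1)=\tfrac{h^2}{2}g_1(u_1)$, which matches item (\ref{Lema: positividad - item 1}) with the empty sum, and is strictly positive since $u_1>0$ and $g_1'>0$, $g_1(0)=0$ force $g_1(u_1)>0$. For the inductive step, I would use the three-term recursion $U_{k+1}(u_1)=2U_k(u_1)-U_{k-1}(u_1)+h^2g_1(U_k(u_1))$, which rearranges to
\begin{equation*}
(U_{k+1}-U_k)(u_1)=(U_k-U_{k-1})(u_1)+h^2g_1\big(U_k(u_1)\big).
\end{equation*}
Plugging the inductive hypothesis for $(U_k-U_{k-1})(u_1)$ into the right-hand side immediately produces the claimed formula for $(U_{k+1}-U_k)(u_1)$ with one extra term $h^2g_1(U_k(u_1))$ in the sum; positivity follows because $U_k(u_1)>0$ (inductive hypothesis, item (\ref{Lema: positividad - item 2})) gives $g_1(U_k(u_1))>0$, and the previous difference is positive as well. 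Then item (\ref{Lema: positividad - item 2}) for $k+1$ is recovered by the telescoping sum $U_{k+1}(u_1)=u_1+\sum_{j=2}^{k+1}(U_j-U_{j-1})(u_1)$ and a routine rearrangement of the double sum to collect the coefficient $(k+1-j)$ in front of each $g_1(U_j(u_1))$; positivity is then obvious.

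For items (\ref{Lema: positividad - item 3}) and (\ref{Lema: positividad - item 4}), differentiating the recursion $U_{k+1}=2U_k-U_{k-1}+h^2g_1(U_k)$ with respect to $u_1$ gives $U_{k+1}'=2U_k'-U_{k-1}'+h^2g_1'(U_k)U_k'$, equivalently $(U_{k+1}'-U_k')(u_1)=(U_k'-U_{k-1}')(u_1)+h^2g_1'(U_k(u_1))U_k'(u_1)$, which is formally identical to the recursion analyzed above with $g_1(U_j)$ replaced by $g_1'(U_j)U_j'$. The base case is $U_2'(u_1)=1+\tfrac{h^2}{2}g_1'(u_1)>1$. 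The same induction then yields the stated formulas; the sign is preserved because $g_1'>0$ on positives (using $U_k(u_1)>0$) and $U_k'(u_1)>1>0$ by the inductive hypothesis for item (\ref{Lema: positividad - item 4}). The argument is essentially bookkeeping; the only mild subtlety — the ``main obstacle,'' such as it is — is getting the reindexing of the nested sums right so that the coefficient of $g_1(U_j(u_1))$ comes out as exactly $(k-j)$ (resp. $(k+1-j)$) and the boundary term $\tfrac{k-1}{2}g_1(u_1)$ accumulates correctly; this is purely combinatorial and carries no analytic content.
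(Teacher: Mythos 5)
Your proof is correct and is exactly the argument the paper intends: the paper gives no detailed proof, simply remarking that the lemma follows ``arguing recursively'' (citing \cite[Remark 20]{DeDrMa05}), and your simultaneous induction on $k$ using the recursion (\ref{ec: relaciones recursivas}), telescoping for items (\ref{Lema: positividad - item 2}) and (\ref{Lema: positividad - item 4}), and differentiation of the recursion for items (\ref{Lema: positividad - item 3}) and (\ref{Lema: positividad - item 4}) is precisely that recursive argument.
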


As in \cite{Dratman11a} we have the following lemma. This result is important for the existence and uniqueness of the positive solutions of (\ref{ec: sistema_A}).

\begin{lemma}\label{Lema: Crecimientos}
For any $u_1 > 0$, the following assertions hold:
\begin{enumerate}
\item \label{Lema: Crecimientos - item 1}
$\Big(\frac{U_k-U_{k-1}}{g_1(U_{k})}\Big)'(u_1)< 0$,
\item  \label{Lema: Crecimientos - item 2}
$\Big(\frac{U_k-U_{1}}{g_1(U_{k})}\Big)'(u_1)< 0$,
\item  \label{Lema: Crecimientos - item 3}
$\Big(\frac{U_{k}-U_{k-1}}{U_{k}-U_{1}}\Big)'(u_1) \ge 0$,
\item  \label{Lema: Crecimientos - item 4}
$\Big(\frac{g_1(U_k)}{g_1(U_1)}\Big)'(u_1) > 0$,
\end{enumerate}
for $2\le k\le n$.
\end{lemma}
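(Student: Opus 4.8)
The plan is to prove all four assertions by induction on $k$, using the recursive definitions (\ref{ec: relaciones recursivas}) together with Lemma \ref{Lema: positividad} and the hypotheses $g_1(0)=0$, $g_1'>0$, $g_1''>0$, $g_1'''\ge 0$ on $(0,\infty)$. For the base case $k=2$, from (\ref{ec: relaciones recursivas}) we have $U_2-U_1 = \frac{h^2}{2}g_1(u_1)$, so $(U_2-U_1)/g_1(U_2) = \frac{h^2}{2}\,g_1(u_1)/g_1(U_2)$; since $U_2 > U_1 = u_1$ and $U_2' > 1 > U_1' = 1$... more carefully, $g_1(u_1)/g_1(U_2(u_1))$ has derivative of the sign of $g_1'(u_1)g_1(U_2) - g_1(u_1)g_1'(U_2)U_2'$, which is negative because $g_1'$ is increasing (so $g_1'(u_1)\le g_1'(U_2)$), $g_1(u_1) < g_1(U_2)$, and $U_2' > 1$; this gives item (\ref{Lema: Crecimientos - item 1}). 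Item (\ref{Lema: Crecimientos - item 2}) coincides with (\ref{Lema: Crecimientos - item 1}) when $k=2$, item (\ref{Lema: Crecimientos - item 3}) is trivial since the quotient is identically $1$, and item (\ref{Lema: Crecimientos - item 4}) follows from $U_2' > U_1'$ and $g_1''>0$ (log-derivative $g_1'(U_2)U_2'/g_1(U_2) - g_1'(U_1)/g_1(U_1)$, and $g_1'/g_1$ is... no, need $g_1'(U_2)U_2' \cdot g_1(U_1) > g_1'(U_1) g_1(U_2)$; use $g_1'(U_2)\ge g_1'(U_1)$, $U_2'>1$, and $g_1(U_1) < g_1(U_2)$ — wait, that goes the wrong way, so instead use convexity of $g_1$: $g_1(U_2)/g_1(U_1) \le U_2'\cdot$ something, this needs the bound $g_1(U_2)\le g_1(U_1) + g_1'(U_2)(U_2-U_1)$ combined with the recursion; I will handle this carefully using $g_1(U_2)g_1'(U_1) < g_1(U_1)g_1'(U_2)U_2'$).

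For the inductive step, I would assume items (\ref{Lema: Crecimientos - item 1})--(\ref{Lema: Crecimientos - item 4}) hold for all indices up to $k$ and prove them for $k+1$. The key algebraic identity is the telescoping/recursive relation
$U_{k+1}-U_k = (U_k-U_{k-1}) + h^2 g_1(U_k)$
from (\ref{ec: relaciones recursivas}), whose derivative is $(U_{k+1}'-U_k') = (U_k'-U_{k-1}') + h^2 g_1'(U_k)U_k'$. For item (\ref{Lema: Crecimientos - item 1}) at level $k+1$, I would write
$\frac{U_{k+1}-U_k}{g_1(U_{k+1})} = \frac{U_k-U_{k-1}}{g_1(U_{k+1})} + h^2\frac{g_1(U_k)}{g_1(U_{k+1})}$,
differentiate, and control each term: the first summand is a product of $(U_k-U_{k-1})/g_1(U_k)$ (decreasing by the inductive hypothesis (\ref{Lema: Crecimientos - item 1})) with $g_1(U_k)/g_1(U_{k+1})$ (which I expect to be decreasing, analogously to item (\ref{Lema: Crecimientos - item 4}) but with indices $k+1,k$), and the second is $h^2$ times the reciprocal of $g_1(U_{k+1})/g_1(U_k)$, again decreasing. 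Items (\ref{Lema: Crecimientos - item 2}) and (\ref{Lema: Crecimientos - item 3}) would be derived similarly using the decomposition $U_{k+1}-U_1 = (U_k-U_1)+(U_{k+1}-U_k)$ and the quotient rule, where item (\ref{Lema: Crecimientos - item 3}) at level $k$ is exactly what feeds item (\ref{Lema: Crecimientos - item 2}). Item (\ref{Lema: Crecimientos - item 4}) at level $k+1$ requires showing $g_1(U_{k+1})/g_1(U_1)$ is increasing, equivalently $g_1'(U_{k+1})U_{k+1}'g_1(U_1) > g_1'(U_1)g_1(U_{k+1})$; here I would use $g_1'''\ge 0$ (convexity of $g_1'$) together with Lemma \ref{Lema: positividad}(\ref{Lema: positividad - item 4}) giving $U_{k+1}' > 1$, and the already-established monotonicity of $g_1(U_k)/g_1(U_1)$.

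The main obstacle is the circular-looking dependency among the four items: item (\ref{Lema: Crecimientos - item 1}) seems to need a version of item (\ref{Lema: Crecimientos - item 4}) with shifted indices, item (\ref{Lema: Crecimientos - item 2}) needs item (\ref{Lema: Crecimientos - item 3}), and item (\ref{Lema: Crecimientos - item 3}) needs the sign of $(U_{k+1}-U_k)'/(U_{k+1}-U_k)$ compared with $(U_{k+1}-U_1)'/(U_{k+1}-U_1)$, which again loops back. I expect to resolve this by proving at each level $k$ an auxiliary monotonicity statement — namely that $g_1(U_{k})/g_1(U_{k-1})$ and $(U_k-U_{k-1})/(U_{k-1}-U_{k-2})$ are nondecreasing in $u_1$, or equivalently a uniform comparison of logarithmic derivatives $U_j'/( \cdot )$ — and carrying this strengthened hypothesis through the induction alongside the stated four items. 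The convexity hypotheses $g_1''>0$ and $g_1'''\ge 0$ are precisely what make the logarithmic derivative $g_1'(U_j)U_j'/g_1(U_j)$ behave monotonically along the chain $U_1 < U_2 < \cdots < U_n$ (recall $U_1' < U_2' < \cdots < U_n'$ by Lemma \ref{Lema: positividad}), and I would isolate this as the technical heart of the argument; everything else is bookkeeping with the quotient rule and the positivity facts from Lemma \ref{Lema: positividad}.
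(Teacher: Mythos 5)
You should first note that the paper itself does not prove this lemma: it is imported verbatim from the companion paper \cite{Dratman11a}, so your proposal has to stand on its own. As it stands it is a plan rather than a proof, and the decisive step is missing. Your reduction is reasonable in outline: writing, via Lemma \ref{Lema: positividad}, $U_k-U_{k-1}=h^2\big(\tfrac12 g_1(U_1)+\sum_{j=2}^{k-1}g_1(U_j)\big)$, items (i)--(iii) do follow (by quotient and weighted--average arguments) once one knows the pairwise monotonicity of the ratios $g_1(U_k)/g_1(U_j)$ for $j<k$, equivalently the ordering of the logarithmic derivatives $\rho_k:=g_1'(U_k)U_k'/g_1(U_k)$, i.e.\ $\rho_1\le\rho_2\le\cdots\le\rho_n$, of which item (iv) is the case $j=1$. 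But this ordering is precisely the ``technical heart'' that you only announce (``I expect to resolve this\dots'', ``I would isolate this\dots'') and never establish, and it is genuinely delicate: the crude monotonicity facts you invoke pull the wrong way, since $g_1(U_{k+1})>g_1(U_k)$ enters with the unfavourable sign and $g_1'/g_1$ is typically \emph{decreasing} (e.g.\ $g_1(x)=x^p$). Concretely, the natural convexity estimate $g_1(U_{k+1})\le g_1(U_k)+g_1'(U_{k+1})(U_{k+1}-U_k)$ reduces $\rho_{k+1}\ge\rho_k$ to $(U_{k+1}-U_k)'/(U_{k+1}-U_k)\ge\rho_k$; however, by Lemma \ref{Lema: positividad} this logarithmic derivative is a convex combination of $\rho_1,\dots,\rho_k$ and is therefore $\le\rho_k$ under the very ordering being propagated, so this route does not close and a substantially finer inductive argument (the content of \cite{Dratman11a}) is required. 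That argument is exactly what your proposal lacks.

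In addition, even your base case $k=2$ is not justified as written: from $g_1'(u_1)\le g_1'(U_2)$, $g_1(u_1)<g_1(U_2)$ and $U_2'>1$ one cannot conclude $g_1'(u_1)g_1(U_2)<g_1(u_1)g_1'(U_2)U_2'$, because the middle inequality enters with the wrong sign (the same phenomenon you notice yourself when discussing item (iv)). The inequality is true, but one must use the recursion (\ref{ec: relaciones recursivas}) quantitatively: since $U_2-U_1=\tfrac{h^2}{2}g_1(u_1)$ and $U_2'-1=\tfrac{h^2}{2}g_1'(u_1)$, convexity gives $g_1(U_2)-g_1(u_1)\le g_1'(U_2)\tfrac{h^2}{2}g_1(u_1)$, whence
\begin{equation*}
g_1(u_1)\,g_1'(U_2)\,U_2'-g_1'(u_1)\,g_1(U_2)\;\ge\; g_1(u_1)\big(g_1'(U_2)-g_1'(u_1)\big)\;>\;0 .
\end{equation*}
In short, you have correctly identified the strengthened hypothesis that must be carried through the induction, but the proposal proves neither the base case correctly nor the propagation of that hypothesis, and the latter is the entire difficulty of the lemma.
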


The next result studies the monotony of certain relations between $g_1$ and $g_2$.

\begin{lemma}\label{Lema: Crecimientos 2}
Let $G_1$ be the primitive function of $g_1$ such that $G_1(0)=0$. If $x > 0$, then:
\begin{enumerate}
\item  \label{Lema: Crecimientos 2 - item 1}
$\Big(\frac{g_1^2}{G_1}\Big)'(x)> 0$.
\item  \label{Lema: Crecimientos 2 - item 2}
If $\Big(\frac{G_1}{g_2^2}\Big)'(x)>0$, then $\Big(\frac{g_1}{g_2}\Big)'(x)>0$.
\item  \label{Lema: Crecimientos 2 - item 3}
If exists $d \in [0,1)$ such that $\big(\ln (G_1^d(x)/g_2^2(x))\big)' \ge 0$, then $\Big(\frac{G_1}{g_2^2}\Big)'(x)>0$.
\end{enumerate}
\end{lemma}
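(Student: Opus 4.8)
The plan is to dispatch the three items in order, each reducing to an elementary one-variable inequality, and to use item (1) as the bridge needed for item (2). Throughout I will use that $G_1'=g_1$, that $G_1(0)=0$, and that $g_1,G_1,g_2>0$ on $(0,\infty)$ (for $G_1$ this follows from $G_1(0)=0$ and $G_1'=g_1>0$ on $(0,\infty)$).

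For item (1), I would write $\left(\frac{g_1^2}{G_1}\right)'(x)=\frac{g_1(x)\,\psi(x)}{G_1(x)^2}$ with $\psi:=2g_1'G_1-g_1^2$. Since $g_1(x)>0$ and $G_1(x)>0$ for $x>0$, it suffices to show $\psi(x)>0$ there. Now $\psi(0)=0$ because $g_1(0)=G_1(0)=0$, and upon differentiating the two middle terms cancel: $\psi'(x)=2g_1''(x)G_1(x)+2g_1'(x)g_1(x)-2g_1(x)g_1'(x)=2g_1''(x)G_1(x)$, which is positive for $x>0$ since $g_1''>0$ and $G_1>0$ there. Hence $\psi$ is strictly increasing on $(0,\infty)$, so $\psi(x)>\psi(0)=0$.

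For item (2), I would first clear denominators: $\left(\frac{G_1}{g_2^2}\right)'(x)>0$ is equivalent to $g_1g_2>2G_1g_2'$, and $\left(\frac{g_1}{g_2}\right)'(x)>0$ is equivalent to $g_1'g_2>g_1g_2'$. From item (1) we have $g_1'>\frac{g_1^2}{2G_1}$; multiplying by $g_2>0$ gives $g_1'g_2>\frac{g_1}{2G_1}(g_1g_2)$, and then multiplying the hypothesis $g_1g_2>2G_1g_2'$ by $\frac{g_1}{2G_1}>0$ gives $\frac{g_1}{2G_1}(g_1g_2)>g_1g_2'$. Chaining the two inequalities yields $g_1'g_2>g_1g_2'$, the desired conclusion.

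For item (3), passing to logarithmic derivatives is cleanest: both $G_1^d/g_2^2$ and $G_1/g_2^2$ are positive on $(0,\infty)$, so each derivative has the same sign as the corresponding logarithmic derivative. The hypothesis reads $d\frac{g_1}{G_1}-2\frac{g_2'}{g_2}\ge 0$, and we must show $\frac{g_1}{G_1}-2\frac{g_2'}{g_2}>0$. Since $d\in[0,1)$ and $\frac{g_1}{G_1}>0$ on $(0,\infty)$, we get $\frac{g_1}{G_1}-2\frac{g_2'}{g_2}>d\frac{g_1}{G_1}-2\frac{g_2'}{g_2}\ge 0$, which finishes the argument. I expect no serious obstacle here; the only points requiring a little care are the direction of the inequalities in the chaining step of item (2) and recording the positivity of $G_1$ on $(0,\infty)$.
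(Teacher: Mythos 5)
Your proof is correct and follows essentially the same route as the paper: all three items reduce to the inequality $2g_1'G_1>g_1^2$ and to comparing the logarithmic derivatives of $G_1$ and $g_2^2$ with the factor $d<1$. The only (cosmetic) difference is in item (1), where you obtain $2g_1'G_1-g_1^2>0$ by noting it vanishes at $0$ and has derivative $2g_1''G_1>0$, while the paper derives the same inequality from the integral comparison $\int_0^x g_1g_1'\,dt<g_1'(x)\int_0^x g_1\,dt$; both hinge on the convexity of $g_1$.
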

\begin{proof} Since $g_1$ is a positive and strictly convex function in $\Rpos$ and $g_1(0)=0$, we have that
\begin{equation}\label{ec: desigualdad integral crecimientos}
\dfrac{g^2_1(x)}{2} = \int^x_0 g_1(t)g'_1(t) dt < g'_1(x) \int^x_0
g_1(t) dt = g'_1(x) G_1(x).
\end{equation}
Multiplying both sides by $g_1(x)$, we obtain
$$
\dfrac{g_1(x)}{G_1(x)} < \dfrac{2g_1(x)g'_1(x)}{g^2_1(x)},
$$
which proves (\ref{Lema: Crecimientos 2 - item 1}).

Now suppose that $(G_1/g_2^2)'(x)>0$, then
$$
\dfrac{2g_2(x)g'_2(x)}{g^2_2(x)} <  \dfrac{g_1(x)}{G_1(x)}.
$$
Combining this inequality with (\ref{ec: desigualdad integral crecimientos}), we obtain
$$
\dfrac{g'_2(x)}{g_2(x)} < \dfrac{g^2_1(x)}{2 G_1(x)g'_1(x)}
\dfrac{g'_1(x)}{g_1(x)} \le \dfrac{g'_1(x)}{g_1(x)},
$$
and (\ref{Lema: Crecimientos 2 - item 2}) is proved.

Finally, suppose that exists $d \in [0,1)$ such that $\big(\ln (G_1^d(x)/g_2^2(x))\big)' \ge 0$, we deduce
$$\begin{array}{rcl}
0 \le \big(\ln(G_1^d(x)/g_2^2(x))\big)' &=& d\big(\ln(G_1(x))\big)'-\big(\ln(g_2^2(x))\big)'\\
&=&\big(\ln(G_1(x))\big)'\Big(d-\dfrac{\big(\ln(g_2^2(x))\big)'}{\big(\ln(G_1(x))\big)'}\Big).
\end{array}$$
Since $\big(\ln(G_1(x))\big)'=g_1(x)/G_1(x) > 0$, we have that
$$
\dfrac{\big(\ln(g^2_2(x))\big)'}{\big(\ln(G_1(x))\big)'} \le d <
1.
$$
From this inequality, we obtain
\begin{equation}\label{ec: cond equiv G creciente}
\dfrac{2g'_2(x)g_2(x)}{g^2_2(x)}=\big(\ln(g^2_2(x))\big)' <
\big(\ln(G_1(x))\big)'=\dfrac{g_1(x)}{G_1(x)},
\end{equation}
which completes the proof.
\end{proof}

%
%

\subsubsection{Analogy between discrete and continuous solutions}
Set $u_k:= U_k(u_1)$ for $2\le k \le n$. The first step in our analysis of the positive solutions of (\ref{ec: sistema_A}) is to estimate the discrete derivative of the solution $u$ of (\ref{ec: heat equation - stationary}). Multiplying the identity $u''=g_1(u)$ by $u'$ and integrating over the interval $[0,x]$ it follows that
\begin{equation}\label{ec: continuous conservation law}
\frac{1}{2} u'(x)^2 =  \int_0^x u'(s)u''(s)ds = \int_0^x u'(s)g_1(u(s))ds = G_1(u(x))-G_1(u(0))
\end{equation}
holds for any $x \in (0,1)$, where $G_1$ is the primitive function of $g_1$ such that $G_1(0)=0$. The following result shows that $\frac{1}{2} (\frac{u_m-u_{m-1}}{h})^2$, a discretization of $\frac{1}{2}u'(x)^2$, equals the trapezoidal rule applied to $\int_0^x g_1(u(s))u'(s)ds$ up to a certain error term.
\begin{lemma}\label{Lema: derivada discreta vs trapecios}
For every $u_1>0$ and every $2\le m \le n$, we have:
\begin{equation}\label{ec: derivada discreta vs trapecios}
\begin{array}{rcl}
\dfrac{1}{2}\Big(\dfrac{u_m - u_{m-1}}{h}\Big)^2 &=&
 \displaystyle\sum_{k=1}^{m-1}\dfrac{g_1(u_{k+1}) + g_1(u_{k})}{2}(u_{k+1} - u_k)  \\[1ex]
  &&- \dfrac{g_1(u_1)}{4}(u_2 - u_1) - \dfrac{g_1(u_m)}{2}(u_m - u_{m-1}).
\end{array}
\end{equation}
\end{lemma}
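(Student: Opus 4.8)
The plan is to exploit the recursive relations \eqref{ec: relaciones recursivas}, which give the second-order difference identity
\[
u_{k+1}-2u_k+u_{k-1} = h^2 g_1(u_k) \qquad (2\le k\le n-1),
\]
together with the ``initial'' relation $u_2-u_1=\tfrac{h^2}{2}g_1(u_1)$. The natural approach is a discrete analogue of the energy argument \eqref{ec: continuous conservation law}: just as one multiplies $u''=g_1(u)$ by $u'$ and integrates, here I would multiply the difference equation by a suitable discrete derivative and sum (Abel summation / summation by parts). Concretely, set $v_k:=u_k-u_{k-1}$ for $2\le k\le n$; then the difference equation reads $v_{k+1}-v_k=h^2g_1(u_k)$. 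Multiplying by $v_{k+1}+v_k$ gives the telescoping identity
\[
v_{k+1}^2-v_k^2 = h^2 g_1(u_k)\,(v_{k+1}+v_k) = h^2 g_1(u_k)\big((u_{k+1}-u_k)+(u_k-u_{k-1})\big).
\]

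Next I would sum this identity over $k=2,\dots,m-1$, so that the left-hand side telescopes to $v_m^2-v_2^2$. Dividing by $2h^2$ yields
\[
\frac{1}{2}\Big(\frac{u_m-u_{m-1}}{h}\Big)^2 - \frac{1}{2}\Big(\frac{u_2-u_1}{h}\Big)^2
= \frac12\sum_{k=2}^{m-1} g_1(u_k)\big((u_{k+1}-u_k)+(u_k-u_{k-1})\big).
\]
The right-hand sum is then re-indexed so that each increment $u_{k+1}-u_k$ is multiplied by $\tfrac12\big(g_1(u_{k+1})+g_1(u_k)\big)$ — this is exactly the trapezoidal weight — with boundary corrections of the form $-\tfrac{g_1(u_2)}{2}(u_2-u_1)$ and $-\tfrac{g_1(u_m)}{2}(u_m-u_{m-1})$ picked up at the ends of the summation range. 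To finish, I would use the explicit value $u_2-u_1=\tfrac{h^2}{2}g_1(u_1)$ to rewrite $\tfrac12(\tfrac{u_2-u_1}{h})^2 = \tfrac14 g_1(u_1)(u_2-u_1)$, and similarly replace the stray boundary term $-\tfrac{g_1(u_2)}{2}(u_2-u_1)$ by combining it with the $k=1$ trapezoidal contribution so that the sum can be started from $k=1$ with a leftover $-\tfrac{g_1(u_1)}{4}(u_2-u_1)$; reorganizing these boundary pieces produces exactly \eqref{ec: derivada discreta vs trapecios}.

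The only genuinely delicate part is the bookkeeping of the boundary terms: the discrete ``integration by parts'' does not see the first node as an interior node (the relation at $k=1$ carries the anomalous factor $\tfrac{h^2}{2}$ rather than $h^2$), so one must carefully track how the $k=1$ term enters the trapezoidal sum and verify that the mismatch is absorbed into the $-\tfrac{g_1(u_1)}{4}(u_2-u_1)$ correction. I expect this reconciliation of the left-endpoint contribution to be the main obstacle; everything else is the telescoping sum and a routine re-indexing of the double-counted increments into trapezoidal form. A clean way to organize it is to prove the identity by induction on $m$: the base case $m=2$ is a direct check using $u_2-u_1=\tfrac{h^2}{2}g_1(u_1)$, and the inductive step adds the single telescoped term $\tfrac{1}{2h^2}(v_{m+1}^2-v_m^2)=\tfrac12 g_1(u_m)\big((u_{m+1}-u_m)+(u_m-u_{m-1})\big)$ and checks it matches the change in the right-hand side of \eqref{ec: derivada discreta vs trapecios}, which then reduces to the algebraic identity $\tfrac12 g_1(u_m)(u_{m+1}-u_m)+\tfrac12 g_1(u_m)(u_m-u_{m-1}) = \big(\tfrac{g_1(u_{m+1})+g_1(u_m)}{2}(u_{m+1}-u_m)\big) - \tfrac{g_1(u_m)}{2}(u_{m+1}-u_m) + \tfrac{g_1(u_m)}{2}(u_m-u_{m-1})$, verified termwise.
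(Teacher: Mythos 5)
Your argument is essentially the paper's own proof: the paper likewise multiplies the interior relations $u_{k+1}-2u_k+u_{k-1}=h^2g_1(u_k)$ by the symmetric increment $u_{k+1}-u_{k-1}$ (and the first relation $u_2-u_1=\tfrac{h^2}{2}g_1(u_1)$ by $(u_2-u_1)$, i.e.\ uses $\tfrac12\big(\tfrac{u_2-u_1}{h}\big)^2=\tfrac{g_1(u_1)}{4}(u_2-u_1)$), telescopes the resulting squares, and re-indexes the sum into trapezoidal form, exactly as you propose. Your scheme is correct in substance, with only two index slips in the bookkeeping you flagged as delicate: the left-end correction produced by the re-indexing is $-\tfrac{g_1(u_1)}{2}(u_2-u_1)$ (not $-\tfrac{g_1(u_2)}{2}(u_2-u_1)$), and in the inductive step the newly subtracted boundary term is $\tfrac{g_1(u_{m+1})}{2}(u_{m+1}-u_m)$ (not $\tfrac{g_1(u_m)}{2}(u_{m+1}-u_m)$); with these corrected, both your direct re-indexing and your induction close exactly as in the paper.
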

\begin{proof}
Fix $u_1>0$ and $2\le m \le n$. For $m=2$ the statement holds by (\ref{ec: relaciones recursivas}). Next suppose that $m > 2$ holds. From (\ref{ec: relaciones recursivas}), we deduce the following identities
$$
\begin{array}{rcl}
 \dfrac{2(u_2- u_1)}{h^2}&=& g_1(u_1),\\[1ex]
 \dfrac{u_{k+1} - 2u_k + u_{k-1}}{h^2} &=&  g_1(u_k),\quad (2\le k \le m-1).
 \end{array}
$$
Multiplying the first identity by $(u_{2}-u_{1})/4h$ and the $k$th identity by $(u_{k+1}-u_{k-1})/2h$, we obtain
$$\begin{array}{rcl}
\dfrac{1}{2h} \Big(\dfrac{u_2 \!-\! u_1}{h}\Big)^2 \!\!\!\!&=&\!\!\!\! \dfrac{1}{4}\Big(\dfrac{u_{2} \!-\! u_{1}}{h}\Big) g_1(u_1),\\[2ex]
\dfrac{1}{2h}\Big(\Big(\dfrac{u_{k+1} \!-\! u_k}{h}\Big)^2 - \Big(\dfrac{u_k \!-\! u_{k-1}}{h}\Big)^2\Big) \!\!\!\!&=&\!\!\!\!
\dfrac{1}{2} \Big(\dfrac{u_{k+1} \!-\! u_k}{h} + \dfrac{u_k \!-\! u_{k-1}}{h}\Big) g_1(u_k),
\end{array}$$
for $2\le k\le m-1$. Note that $(u_{k+1}-u_{k-1})/2h$ is a numerical approximation of $u'((k-1)h)$ for $2\le k\le m-1$. Adding these identities multiplied by $h$ we obtain:
$$
\begin{array}{rrr}
\dfrac{1}{2}\Big(\dfrac{u_m \!-\! u_{m-1}}{h}\Big)^2 \!\!\!\!&=&\!\!\!\!
\dfrac{g_1(u_1)}{4}(u_2 \!-\! u_1)+
\displaystyle\sum_{k=2}^{m-1}\dfrac{g_1(u_k)}{2}(u_{k+1} \!-\! u_k+u_k-u_{k-1}) \\[2ex]
\!\!\!\!&=&\!\!\!\! \displaystyle\sum_{k=1}^{m-1}\frac{g_1(u_k)+g_1(u_{k+1})}{2}(u_{k+1} \!-\! u_k) - \dfrac{g_1(u_1)}{4}(u_2 \!-\! u_1)\\
&&  - \dfrac{g_1(u_m)}{2}(u_m \!-\! u_{m-1}).
\end{array}$$
This finishes the proof of the lemma.
\end{proof}

Substituting $x$ for 1 in (\ref{ec: continuous conservation law}) we
obtain the following identity (cf. \cite[\S 3]{ChFiQu91}):
\begin{equation}\label{ec: continuous energy conservation}
\frac{1}{2}\alpha^2 g^2_2(u(1))=G_1(u(1))-G_1(u(0)).
\end{equation}
From this identity one easily deduces that $u(1)$ is determined in terms of $\nu_0:=u(0)$, say, $u(1)=f(\nu_0)$.
Therefore, by (\ref{ec: continuous energy conservation}) it is possible to restate (\ref{ec: heat equation - stationary}) as an
initial-value problem, namely
$$\left\{\begin{array}{rclcl}
  u_{xx} & = & g_1(u) &   \mbox{in }(0,1), \\
  u(0) & = & \nu_0, &   \\
  u_x(0) & = & 0, & \\
\end{array}\right.$$
where $\nu_0>0$ is the solution of the equation $u_x(1)=\alpha g_2(f(\nu_0))$. Our purpose is to obtain a
discrete analogue of this identity, which will be crucial to determine the values $u_1$ of the positive solutions of (\ref{ec: sistema_A}).

Let $(\alpha,u):=(\alpha,u_1 \klk u_n) \in (\R_{>0})^{n+1}$ be a solution of (\ref{ec: sistema_A}). From the last equation of (\ref{ec: sistema_A}) we obtain
$$\begin{array}{rcl}
\dfrac{1}{2}\Big(\dfrac{u_n \!-\! u_{n-1}}{h}\Big)^2 \!\!\!\!&=&\!\!\!\! \dfrac{1}{2} \Big(\alpha g_2(u_n) \!-\! \dfrac{h}{2}g_1(u_n)\Big)^2 \\[1.5ex]
   \!\!\!\!&=&\!\!\!\! \dfrac{1}{2} \alpha^2 g^2_2(u_n)+\dfrac{h}{2} g_1(u_n)\Big( \dfrac{h}{2}g_1(u_n) \!-\! \alpha g_2(u_n) \Big) \!-\! \dfrac{h^2}{8}g^2_1(u_n)\\[1.5ex]
   \!\!\!\!&=&\!\!\!\! \dfrac{1}{2}\alpha^2 g^2_2(u_n)
       \!-\! \dfrac{h}{2} g_1(u_n)\Big(\dfrac{u_n \!-\! u_{n-1}}{h}\Big) \!-\! \dfrac{h^2}{8}g^2_1(u_n).
\end{array}$$
Combining this identity with Lemma \ref{Lema: derivada discreta vs trapecios} we obtain
$$
\dfrac{1}{2}\alpha^2 g^2_2(u_n) =
\displaystyle\sum_{k=1}^{n-1}\frac{g_1(u_k)+g_1(u_{k+1})}{2}(u_{k+1}-u_k) - \dfrac{g_1(u_1)}{4}(u_2-u_1) + \dfrac{h^2}{8}g^2_1(u_n).
$$
Using the identity $g_1(u_1)(u_2-u_1)=\frac{h^2}{2}g^2_1(u_1)$, we deduce that
$$
\dfrac{1}{2}\alpha^2 g^2_2(u_n)- \big(G_1(u_n)-G_1(u_1)\big)=E
+ \frac{h^2}{8}\big(g^2_1(u_n)-g^2_1(u_1)\big),
$$
holds, where $G_1$ is the primitive function of $g_1$ such that $G_1(0)=0$ and $E$ is defined as follow:
\begin{equation}\label{ec: error de trapecios}
E:=\sum_{k=1}^{n-1}\dfrac{g_1(u_k) + g_1(u_{k+1})}{2}(u_{k+1}-u_k)
- \big(G_1(u_n) - G_1(u_1)\big).
\end{equation}
It is easy to check that $E$ is the error of the approximation by the trapezoidal rule of the integral of the function $g_1$ in the interval $[u_1,u_n]$, considering the subdivision of $[u_1,u_n]$ defined by the nodes $u_1 ,\dots, u_n$. Moreover, taking into account that $g_1$ is a convex function in $\R_{\ge 0}$, we easily conclude that $E \ge 0$ holds. Therefore, from the previous considerations we deduce the following proposition, which is the discrete version of (\ref{ec: continuous energy conservation}).
\begin{proposition}\label{Prop: discrete energy conservation}
Let $(\alpha,u) \in(\R_{>0})^{n+1}$ be a solution of {\rm
(\ref{ec: sistema_A})}. Then
\begin{equation}\label{ec: discrete energy conservation}
\dfrac{1}{2}\alpha^2 g^2_2(u_n)- \big(G_1(u_n)-G_1(u_1)\big)=E
+ \frac{h^2}{8}\big(g^2_1(u_n)-g^2_1(u_1)\big),
\end{equation}
where $G_1$ is the primitive function of $g_1$ such that $G_1(0)=0$, and $E$ is defined as in {\rm (\ref{ec: error de trapecios})}. Furthermore, if we consider $E$ as a function of $u_1$ according to {\rm (\ref{ec: error de trapecios})}, where $u_k:=U_k(u_1)$ is defined as in {\rm (\ref{ec: relaciones recursivas})} for $2\le k\le n$, then $E$ is a positive increasing function over $\R_\ge 0$.
\end{proposition}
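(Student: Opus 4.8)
The plan splits along the two assertions. The identity (\ref{ec: discrete energy conservation}) needs no new argument: it is precisely the outcome of the chain of equalities displayed just before the statement, obtained by rewriting the last equation of (\ref{ec: sistema_A}), inserting the value of $\frac12\big((u_n-u_{n-1})/h\big)^2$ furnished by Lemma \ref{Lema: derivada discreta vs trapecios}, and simplifying with $g_1(u_1)(u_2-u_1)=\frac{h^2}{2}g_1^2(u_1)$, which is a consequence of (\ref{ec: relaciones recursivas}). So the real content is the last claim: regarded as a function of $u_1$ through $u_k:=U_k(u_1)$, the quantity $E$ of (\ref{ec: error de trapecios}) is positive and increasing on $\R_{\ge 0}$.

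The first step I would take is to pass to the exact pointwise form of the trapezoidal error. Integrating by parts twice one gets, for $a<b$,
\[
\frac{b-a}{2}\big(g_1(a)+g_1(b)\big)-\int_a^b g_1(t)\,dt=\frac12\int_a^b (t-a)(b-t)\,g_1''(t)\,dt ,
\]
so that, summing over the consecutive subintervals $[u_k,u_{k+1}]$,
\[
E(u_1)=\frac12\sum_{k=1}^{n-1}\int_{u_k}^{u_{k+1}}(t-u_k)(u_{k+1}-t)\,g_1''(t)\,dt .
\]
Since $g_1''>0$ on $\Rpos$ and, by Lemma \ref{Lema: positividad}, $0<u_1\le u_k<u_{k+1}$ whenever $u_1>0$, each summand is positive, hence $E(u_1)>0$ for $u_1>0$; and $E(0)=0$ because $U_k(0)=0$ for every $k$ (using $g_1(0)=0$ in (\ref{ec: relaciones recursivas})).

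For monotonicity I would differentiate the last display with respect to $u_1$; this is legitimate since $g_1\in\mathcal{C}^3(\R)$ makes both $g_1''$ and each $U_k$ of class $\mathcal{C}^1$. The boundary terms in Leibniz's rule vanish because the integrand $(t-u_k)(u_{k+1}-t)g_1''(t)$ is zero at $t=u_k$ and at $t=u_{k+1}$, leaving
\[
E'(u_1)=\frac12\sum_{k=1}^{n-1}\int_{u_k}^{u_{k+1}}\Big(U_{k+1}'(u_1)(t-u_k)-U_k'(u_1)(u_{k+1}-t)\Big)g_1''(t)\,dt .
\]
I would then split the integrand as $U_k'(u_1)\big(2t-u_k-u_{k+1}\big)+\big(U_{k+1}'(u_1)-U_k'(u_1)\big)(t-u_k)$. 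The second summand is $\ge 0$ on $[u_k,u_{k+1}]$ by Lemma \ref{Lema: positividad}\,(\ref{Lema: positividad - item 3}), and its integral against $g_1''>0$ is strictly positive for $u_1>0$. For the first summand, the change of variable $t=m_k+s$ with $m_k:=\frac{u_k+u_{k+1}}{2}$ and $s\in[-\delta_k,\delta_k]$, $\delta_k:=\frac{u_{k+1}-u_k}{2}$, turns $\int_{u_k}^{u_{k+1}}(2t-u_k-u_{k+1})g_1''(t)\,dt$ into $\int_0^{\delta_k}2s\big(g_1''(m_k+s)-g_1''(m_k-s)\big)\,ds\ge 0$, because $g_1'''\ge 0$ makes $g_1''$ nondecreasing on $\Rpos$; multiplying by $U_k'(u_1)>1>0$ (Lemma \ref{Lema: positividad}\,(\ref{Lema: positividad - item 4})) preserves nonnegativity. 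Hence $E'(u_1)>0$ for every $u_1>0$, which together with $E(0)=0$ gives that $E$ is positive and increasing on $\R_{\ge 0}$.

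The step I expect to be the crux is the symmetrization of the sign-changing contribution $U_k'(u_1)(2t-u_k-u_{k+1})$: this is exactly where the standing hypothesis $g_1'''\ge 0$ enters, and without it the sign of $E'$ would not be controlled. The remaining ingredients — the two integrations by parts and the bounds $u_k\le u_{k+1}$, $U_k'>1$, $U_{k+1}'\ge U_k'$ — are routine and already available from Lemma \ref{Lema: positividad}.
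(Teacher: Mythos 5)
Your proof is correct and follows essentially the same route as the paper: the identity is read off from the computation preceding the statement, and monotonicity is obtained by showing that each per-subinterval trapezoidal error $E_k$ is increasing, splitting $E_k'$ into a term proportional to $U_k'(u_1)$ whose sign is controlled by $g_1'''\ge 0$ and a term proportional to $U_{k+1}'(u_1)-U_k'(u_1)$ controlled by the monotonicity of $g_1'$ and Lemma \ref{Lema: positividad} --- exactly the paper's decomposition. The only difference is presentational: you reach this split via the Peano-kernel representation of the trapezoidal error and differentiation under the integral sign (which also yields the positivity of $E$ directly), whereas the paper differentiates the closed-form error and regroups using the Mean Value Theorem; an integration by parts shows the two decompositions coincide term by term.
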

\begin{proof} For the above considerations, we only have to prove that $E$ is an increasing function over $\R_\ge 0$.

We consider $E$ as a function of $u_1$, where $u_k:=U_k(u_1)$ is defined as in {\rm (\ref{ec: relaciones recursivas})} for $2\le k\le n$. If rewrite $E$ as follows
\begin{equation}\label{ec: error de trapecios 2}
E = \sum_{k=1}^{n-1}\Big(\dfrac{g_1(u_k) + g_1(u_{k+1})}{2}(u_{k+1}-u_k)
- \big(G_1(u_{k+1}) - G_1(u_k)\big)\Big),
\end{equation}
then it suffices to show that each term of the previous sum is an increasing function over $\R_\ge 0$. In fact, fix $1 \le k \le n-1$; the derivative of the $k$th term of (\ref{ec: error de trapecios 2}) as function of $u_1$ is
$$\begin{array}{l}
 \dfrac{\partial}{\partial u_1} \Big(\dfrac{g_1(u_k) + g_1(u_{k+1})}{2}(u_{k+1}-u_k)
- \big(G_1(u_{k+1}) - G_1(u_k)\big)\Big)=\\[1.5ex]
\quad =\dfrac{g'_1(u_k) v'_k + g'_1(u_{k+1})v'_{k+1}}{2}(u_{k+1}-u_k)
+\dfrac{g_1(u_k)+ g_1(u_{k+1})}{2}(v'_{k+1}-v'_k)\\
\qquad -\big(g_1(u_{k+1})v'_{k+1} - g_1(u_k)v'_k\big),
\end{array}$$
where $v'_{k}:= U_k'(u_1) $ and $v'_{k+1}:=U_{k+1}'(u_1)$. Adding and subtracting $v'_k$ in each occurrence of $v'_{k+1}$ we obtain
$$\begin{array}{l}
 \dfrac{\partial}{\partial u_1} \Big(\dfrac{g_1(u_k) + g_1(u_{k+1})}{2}(u_{k+1}-u_k)
- \big(G_1(u_{k+1}) - G_1(u_k)\big)\Big)=\\[1.5ex]
\quad = \Big(\dfrac{g'_1(u_k) + g'_1(u_{k+1})}{2}(u_{k+1}-u_k) -\big(g_1(u_{k+1}) - g_1(u_k)\big)\Big) v'_k \\
\qquad + \Big(\dfrac{g'_1(u_{k+1})}{2}(u_{k+1}-u_k)
 - \dfrac{g_1(u_{k+1})- g_1(u_{k})}{2}\Big)(v'_{k+1}-v'_k)\\[1.5ex]
\quad = \Big(\dfrac{g'_1(u_k) + g'_1(u_{k+1})}{2}(u_{k+1}-u_k) -\big(g_1(u_{k+1}) - g_1(u_k)\big)\Big) v'_k \\
\qquad + \Big(\dfrac{g'_1(u_{k+1})}{2}
 - \dfrac{g'_1(\xi_k)}{2}\Big)(u_{k+1}-u_k)(v'_{k+1}-v'_k),
\end{array}$$
where $\xi_k \in (u_k,u_{k+1})$ is obtained after applying the Mean Value Theorem to $g_1(u_{k+1}) - g_1(u_k)$. It is easy to check that
$$
\dfrac{g'_1(u_k) + g'_1(u_{k+1})}{2}(u_{k+1}-u_k) -\big(g_1(u_{k+1}) - g_1(u_k)\big)
$$
is the error of the approximation by the trapezoidal rule of the integral of the function $g_1'$ in the interval $[u_{k}, u_{k+1}]$, and the convexity of $g_1'$ ensures their positivity. In the other hand, since $g_1'$ is increasing, we have that $g_1'(u_{k+1}) - g_1'(\xi_k) \ge 0$. Finally, from Lema \ref{Lema: positividad}, $(v_{k+1}'-v_k')$, $(u_{k+1}-u_k)$ and $v_k'$ are positive numbers for $u_1 \in \R_\ge 0$. Therefore, the $k$th term of (\ref{ec: error de trapecios 2}) is increasing over $\R_\ge 0$ for $1\le k \le n-1$, which completes the proof.
\end{proof}
%
%
\subsection{Bounds for the positive solutions}\label{subseccion: cotas}
In this section we show bounds for the positive solutions of (\ref{ec: sistema_A}). More precisely, we find an interval containing the positive solutions of (\ref{ec: sistema_A}) whose endpoints only depend on $\alpha$. These
bounds will allow us to establish an efficient procedure of approximation of this solution.

Let $g:\Rpos \rightarrow \Rpos$ and $G:\Rpos \rightarrow \Rpos$ be the functions defined by
\begin{equation}\label{ec: definicion g}
 g(x) := \frac{g_1}{g_2}(x),
\end{equation}
and
\begin{equation}\label{ec: definicion G}
 G(x) := \frac{G_1}{g_2^2}(x),
\end{equation}
where $G_1$ is the primitive function of $g_1$ such that $G_1(0)=0$.

As in \cite[Lemma 7]{Dratman11a} we have the following result
\begin{lemma}\label{Lema: relacion g1 g2 alpha}
Let $(\alpha,u) \in (\R_{>0})^{n+1}$ be a solution of {\rm
(\ref{ec: sistema_A})} for $A=\alpha$. Then
$$
\alpha g_2(u_n) < g_1(u_n).
$$
\end{lemma}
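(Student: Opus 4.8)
The plan is to extract the desired strict inequality directly from the discrete energy identity of Proposition~\ref{Prop: discrete energy conservation} together with the positivity of the error term $E$ and the monotonicity of $G=G_1/g_2^2$. First I would start from
\begin{equation*}
\dfrac{1}{2}\alpha^2 g^2_2(u_n)- \big(G_1(u_n)-G_1(u_1)\big)=E
+ \frac{h^2}{8}\big(g^2_1(u_n)-g^2_1(u_1)\big),
\end{equation*}
and observe that $E\ge 0$ (Proposition~\ref{Prop: discrete energy conservation}) and, since $0<u_1<u_n$ by Lemma~\ref{Lema: positividad}(\ref{Lema: positividad - item 2}) and $g_1$ is strictly increasing on $\Rpos$, also $g_1^2(u_n)-g_1^2(u_1)>0$. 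Hence the right-hand side is strictly positive, giving
\begin{equation*}
\dfrac{1}{2}\alpha^2 g^2_2(u_n) > G_1(u_n)-G_1(u_1).
\end{equation*}

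Next I would use $u_1>0$ and the fact that $G_1$ is strictly increasing (because $G_1'=g_1>0$ on $\Rpos$) together with $G_1(0)=0$ to get $G_1(u_1)>0$, and then I would want to bound $G_1(u_n)-G_1(u_1)$ from below by something comparable to $G_1(u_n)$. The cleanest way is to invoke the strict monotonicity of $G=G_1/g_2^2$: since $0<u_1<u_n$, we have $G(u_1)<G(u_n)$, i.e. $G_1(u_1)/g_2^2(u_1)<G_1(u_n)/g_2^2(u_n)$. I expect that combining $\tfrac12\alpha^2 g_2^2(u_n) > G_1(u_n)-G_1(u_1)$ with this inequality, and dividing through by $g_2^2(u_n)$, will produce a bound of the form $\tfrac12\alpha^2 > G(u_n)-G(u_1)\cdot\big(g_2^2(u_1)/g_2^2(u_n)\big)$ — but that is not quite what is needed, so I would instead work with the relation in the form that isolates $\alpha g_2(u_n)$ against $g_1(u_n)$.

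The main obstacle is bridging the gap between an energy-type inequality (quadratic in $\alpha g_2(u_n)$ and involving $G_1$) and the linear inequality $\alpha g_2(u_n)<g_1(u_n)$. The trick I anticipate, following \cite{Dratman11a}, is the convexity estimate $g_1^2(x)/2 < g_1'(x)G_1(x)$ from \eqref{ec: desigualdad integral crecimientos}, or rather its integrated consequence $G_1(x) < \tfrac12 x\, g_1(x)$ (which follows from convexity of $g_1$ with $g_1(0)=0$, since $G_1(x)=\int_0^x g_1(t)\,dt<\int_0^x g_1(x)\,dt$ is the wrong direction — so in fact one uses $G_1(x)>\tfrac12 x g_1(x)$ by convexity). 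More to the point, from the displayed chain one gets $\tfrac12\alpha^2 g_2^2(u_n) > G_1(u_n)-G_1(u_1)$, and one then needs an upper bound for the right-hand side in terms of $g_1(u_n)^2$. Writing $G_1(u_n)-G_1(u_1)=\int_{u_1}^{u_n} g_1(t)\,dt < (u_n-u_1)\,g_1(u_n)$ by monotonicity of $g_1$, and using the last equation of \eqref{ec: sistema_A} rewritten as $(u_n-u_{n-1})/h=\alpha g_2(u_n)-\tfrac{h}{2}g_1(u_n)$ together with $u_n-u_1 \le (n-1)(u_n-u_{n-1}) = \tfrac1h(u_n-u_{n-1})$ — wait, I would check the direction of this via Lemma~\ref{Lema: Crecimientos}(\ref{Lema: Crecimientos - item 3}) comparing $(u_k-u_{k-1})$ with $(u_k-u_1)$. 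The cleanest route is probably: suppose for contradiction $\alpha g_2(u_n)\ge g_1(u_n)$, then $(u_n-u_{n-1})/h = \alpha g_2(u_n)-\tfrac h2 g_1(u_n)\ge \tfrac h2 g_1(u_n)$... and push this through Lemma~\ref{Lema: positividad} to contradict a bound on $u_n-u_1$ coming from the energy identity.

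Concretely, I would argue by contradiction: assume $\alpha g_2(u_n)\ge g_1(u_n)$. Then from \eqref{ec: discrete energy conservation} and $E\ge 0$,
\begin{equation*}
G_1(u_n)-G_1(u_1) = \tfrac12\alpha^2 g_2^2(u_n) - E - \tfrac{h^2}{8}\big(g_1^2(u_n)-g_1^2(u_1)\big) \le \tfrac12\alpha^2 g_2^2(u_n),
\end{equation*}
while the monotonicity of $G$ (Lemma~\ref{Lema: Crecimientos 2}, giving $(G_1/g_2^2)'>0$, hence $G_1(u_1)/g_2^2(u_1) < G_1(u_n)/g_2^2(u_n)$, hence $G_1(u_1) < G_1(u_n)\, g_2^2(u_1)/g_2^2(u_n) < G_1(u_n)$) keeps $G_1(u_n)-G_1(u_1)$ from degenerating. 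Under the contradiction hypothesis $\alpha g_2(u_n)\ge g_1(u_n)$ one would get $G_1(u_n)-G_1(u_1)\le \tfrac12\alpha g_2(u_n)\cdot g_1(u_n)$-type bound which, combined with $G_1(u_n) > \tfrac12 u_n g_1(u_n)$ (convexity of $g_1$: $G_1(u_n)=\int_0^{u_n}g_1 > \tfrac12 u_n g_1(u_n)$ since a convex function through the origin lies above its chord, so its integral exceeds the triangle area), forces a relation among $u_1,u_n,\alpha$ that contradicts the recursive positivity bounds of Lemma~\ref{Lema: positividad}. I expect the bookkeeping here to be the genuinely delicate part; the structural inputs (sign of $E$, sign of $g_1^2(u_n)-g_1^2(u_1)$, monotonicity of $G$, convexity of $g_1$) are all available from the results already proved, so the proof should be short once the right combination is found.
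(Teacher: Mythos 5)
There is a genuine gap: your text never actually closes the argument. The decisive step of your contradiction scheme is left as the assertion that $\alpha g_2(u_n)\ge g_1(u_n)$ ``forces a relation among $u_1,u_n,\alpha$ that contradicts the recursive positivity bounds of Lemma~\ref{Lema: positividad}''; no such contradiction is exhibited, and producing it is precisely the content of the lemma, not bookkeeping. Worse, the convexity ingredient you plan to use points the wrong way: since $g_1$ is convex with $g_1(0)=0$, on $[0,x]$ it lies \emph{below} the chord joining $(0,0)$ to $(x,g_1(x))$, so $G_1(x)=\int_0^x g_1(t)\,dt\le\frac12\,x\,g_1(x)$; your claim $G_1(x)>\frac12 x\,g_1(x)$ is false (for $g_1(x)=x^p$ one has $G_1(x)=x^{p+1}/(p+1)<x^{p+1}/2$ for $p>1$), so the intended combination collapses. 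Finally, the energy identity (\ref{ec: discrete energy conservation}) is an unpromising starting point for this particular statement: to prove $\alpha g_2(u_n)<g_1(u_n)$ you need an \emph{upper} bound for $\frac12\alpha^2g_2^2(u_n)=G_1(u_n)-G_1(u_1)+E+\frac{h^2}{8}\big(g_1^2(u_n)-g_1^2(u_1)\big)$ by $\frac12 g_1^2(u_n)$, so the terms $E\ge 0$ and the $h^2$-term must be kept and controlled with quantified slack; dropping them, as in your first display, yields an inequality in the direction opposite to the one required.

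The lemma has a direct proof that bypasses the energy identity entirely (this is the argument behind the reference the paper invokes, and the identity it rests on is used again by the paper in the proof of Lemma~\ref{Lema: cota sup un alpha bis}). From the last equation of (\ref{ec: sistema_A}), as encoded in (\ref{ec: relaciones recursivas}), one has $\alpha g_2(u_n)=\frac1h(u_n-u_{n-1})+\frac h2 g_1(u_n)$, and Lemma~\ref{Lema: positividad}(\ref{Lema: positividad - item 1}) gives $\frac1h(u_n-u_{n-1})=h\big(\frac12 g_1(u_1)+\sum_{j=2}^{n-1}g_1(u_j)\big)$, whence $\alpha g_2(u_n)=h\big(\frac12 g_1(u_1)+\sum_{j=2}^{n-1}g_1(u_j)+\frac12 g_1(u_n)\big)$. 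Since $0<u_1<u_2<\cdots<u_n$ by Lemma~\ref{Lema: positividad} and $g_1$ is strictly increasing on $\Rpos$, each $g_1(u_j)$ with $j\le n-1$ is strictly smaller than $g_1(u_n)$, so $\alpha g_2(u_n)<h\big(\frac12+(n-2)+\frac12\big)g_1(u_n)=h(n-1)g_1(u_n)=g_1(u_n)$, because $h=1/(n-1)$. In short, $\alpha g_2(u_n)$ is a weighted average (total weight $1$) of the values $g_1(u_1),\dots,g_1(u_n)$, which is strictly below the largest of them; no convexity, no energy identity, and no smallness of $h$ is needed.
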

From Lemma \ref{Lema: relacion g1 g2 alpha} we obtain the following corollary.
\begin{corollary}\label{Coro: cota inf un alpha}
Let $(\alpha,u) \in (\R_{>0})^{n+1}$ be a solution of {\rm
(\ref{ec: sistema_A})} for $A=\alpha$. If the function $g$ defined in (\ref{ec: definicion g}) is surjective and strictly increasing, then
$$
u_n > g^{-1}(\alpha).
$$
\end{corollary}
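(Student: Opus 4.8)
The plan is to derive the inequality directly from Lemma \ref{Lema: relacion g1 g2 alpha}. By hypothesis the function $g = g_1/g_2$ is surjective and strictly increasing on $\Rpos$, so it admits a well-defined inverse $g^{-1}:\Rpos \to \Rpos$ which is also strictly increasing. The whole argument is then just a two-line manipulation: rewrite the conclusion of Lemma \ref{Lema: relacion g1 g2 alpha} in terms of $g$, and apply $g^{-1}$ to both sides.

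First I would note that, since $(\alpha,u)\in(\R_{>0})^{n+1}$, the value $u_n$ is positive, so $g_2(u_n)>0$ by the standing assumption $g_2'(x)>0$ for $x>0$ together with $g_2(0)=0$. Dividing the inequality $\alpha g_2(u_n) < g_1(u_n)$ of Lemma \ref{Lema: relacion g1 g2 alpha} by $g_2(u_n)$ therefore preserves the inequality and yields
$$
\alpha < \frac{g_1(u_n)}{g_2(u_n)} = g(u_n).
$$
Next, since $g$ is strictly increasing and surjective, $g^{-1}$ is defined on all of $\Rpos$ and is itself strictly increasing; applying it to both sides of $\alpha < g(u_n)$ gives $g^{-1}(\alpha) < g^{-1}(g(u_n)) = u_n$, which is exactly the claimed bound $u_n > g^{-1}(\alpha)$.

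There is essentially no obstacle here: the only points requiring a word of justification are that $g_2(u_n)\neq 0$ (so that the division is legitimate) and that $g^{-1}$ exists and is monotone on the relevant domain (so that applying it preserves the strict inequality), both of which are immediate from the hypotheses. The substantive content has already been absorbed into Lemma \ref{Lema: relacion g1 g2 alpha}, whose proof is cited from \cite[Lemma 7]{Dratman11a}; this corollary merely repackages that estimate in a form convenient for the bounds on the positive solution developed in Section \ref{subseccion: cotas}.
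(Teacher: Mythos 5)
Your proof is correct and is exactly the argument the paper intends: the corollary is stated as an immediate consequence of Lemma \ref{Lema: relacion g1 g2 alpha}, obtained by dividing $\alpha g_2(u_n) < g_1(u_n)$ by $g_2(u_n)>0$ and applying the strictly increasing inverse $g^{-1}$. Your added remarks on the legitimacy of the division and the monotonicity of $g^{-1}$ are the right (and only) points to check.
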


Let $(\alpha,u) \in (\R_{>0})^{n+1}$ be a solution of {\rm (\ref{ec: sistema_A})} for $A=\alpha$. As in \cite[Lemma 9]{Dratman11a} we obtain an upper bound of $u_n$ in terms of $u_1$ and $\alpha$.

\begin{lemma}\label{Lema: cota sup un alpha u1}
Let $(\alpha,u) \in (\R_{>0})^{n+1}$ be a solution of {\rm (\ref{ec: sistema_A})} for $A=\alpha$, and let $C(\alpha)$ be an upper bound of $u_n$. Then $u_n < e^{M}u_1$ holds, with $M:=g_1'\big(C(\alpha)\big)$.
\end{lemma}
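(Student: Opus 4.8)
The plan is to derive a differential-type inequality bounding the growth of $U_k(u_1)$ across consecutive indices and then telescope it from $k=1$ to $k=n$. Writing $u_k := U_k(u_1)$ as in (\ref{ec: relaciones recursivas}), the natural object to control is the ratio $u_k/u_1$, or equivalently the increments $u_{k+1}-u_k$ relative to $u_k$. From Lemma \ref{Lema: positividad}(\ref{Lema: positividad - item 1}) we have an explicit formula for $u_k-u_{k-1}$ as $h^2$ times a sum of values of $g_1$; the idea is to bound each term $g_1(u_j)$ by $g_1'$ evaluated at the upper bound $C(\alpha)$ times $u_j$, using that $g_1(0)=0$ together with the Mean Value Theorem (so $g_1(x) = g_1'(\xi)\,x \le g_1'(C(\alpha))\,x$ for $0 < x \le C(\alpha)$, since $g_1'$ is increasing). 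This converts the recursion for the $u_k$ into a \emph{linear} recursive inequality with constant coefficient $M := g_1'(C(\alpha))$.

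First I would make precise that every intermediate value $u_j$, $1 \le j \le n$, lies below $C(\alpha)$: this follows from Lemma \ref{Lema: positividad}(\ref{Lema: positividad - item 2}), which shows $U_k$ is increasing in $k$, so $u_j \le u_n \le C(\alpha)$. Next, using Lemma \ref{Lema: positividad}(\ref{Lema: positividad - item 1}) and the bound $g_1(u_j) \le M u_j$, I would obtain
\begin{equation*}
u_{k+1} - u_k \le h^2 M\Big(\tfrac{1}{2}u_1 + \textstyle\sum_{j=2}^{k} u_j\Big) \le h^2 M \sum_{j=1}^{k} u_j.
\end{equation*}
Summing this from $k=1$ to an arbitrary index, or equivalently comparing $u_k$ with the solution of the associated linear difference equation, one arrives at a bound of the form $u_k \le u_1 \prod_{j} (1 + c_j h^2)$ or, after passing to logarithms and using $\ln(1+t) \le t$, at $\ln(u_n/u_1) \le M \cdot (\text{something} \le 1)$. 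The point is that the total "time" available is $(n-1)h = 1$, so the accumulated exponent is at most $M$. This is the analogue of the continuous estimate: $u$ solves $u'' = g_1(u) \le M u$ on an interval of length $1$, whence $u(1) \le u(0)e^{\sqrt{M}}$ by comparison, but the discrete combinatorics here yields the cleaner bound $e^{M}$.

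The main obstacle is getting the telescoping constant exactly right so that the final exponent is $M = g_1'(C(\alpha))$ rather than some larger multiple. The double summation $\sum_k \sum_{j\le k} u_j$ must be reorganized carefully — likely by introducing the partial sums $S_k := \sum_{j=1}^k u_j$ and observing $S_k - S_{k-1} = u_k$, so the increment inequality becomes $u_{k+1} - u_k \le h^2 M S_k$, and then bounding $S_n$ or $u_n$ by an explicit product. One must also handle the weight $\tfrac{1}{2}$ on the $u_1$ term and the fact that the sum in Lemma \ref{Lema: positividad}(\ref{Lema: positividad - item 1}) runs only to $k-1$; both work in the favorable direction, so they should only help. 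I expect the argument to mirror closely the proof of \cite[Lemma 9]{Dratman11a}, with $g_1'(C(\alpha))$ playing the role that the exponent of the monomial played there, and the convexity/monotonicity hypotheses on $g_1$ supplying exactly the inequality $g_1(x) \le g_1'(C(\alpha))x$ needed to linearize.
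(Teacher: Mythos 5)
Your plan is correct and is essentially the argument the paper intends (the paper itself gives no proof, deferring to Lemma 9 of the companion paper on the decreasing case): linearizing via the Mean Value Theorem, $g_1(u_j)\le M u_j$ for $u_j\le C(\alpha)$, inside Lemma \ref{Lema: positividad}(\ref{Lema: positividad - item 1}) and telescoping does close, since $\tfrac12 u_1+\sum_{j=2}^{k}u_j\le k\,u_k$ gives $u_{k+1}\le (1+M h^2 k)\,u_k< e^{M h^2 k}u_k$, and $h^2\sum_{k=1}^{n-1}k=\tfrac{n}{2(n-1)}\le 1$ for $n\ge 2$, whence $u_n< e^{M}u_1$. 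The one point you flagged as delicate, the exact telescoping constant, thus resolves itself with room to spare (keeping the weight $\tfrac12$ on $u_1$ even yields exponent $M/2$).
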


The next lemma shows a lower bound of $u_1$ in terms of $\alpha$.

\begin{lemma}\label{Lema: Cota inf u1 alpha}
Let $(\alpha,u) \in (\R_{>0})^{n+1}$ be a solution of {\rm (\ref{ec: sistema_A})} for $A=\alpha$, and let $C(\alpha)$ be an upper bound of $u_n$. If the function $g$ defined in (\ref{ec: definicion g}) is surjective and strictly increasing, then
$$
u_1 > \frac{g^{-1}(\alpha)}{e^M}
$$
holds, where $M:= g_1'\big(C(\alpha)\big)$.
\end{lemma}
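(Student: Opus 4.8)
The plan is to combine the previous two lemmas directly. By Lemma~\ref{Lema: cota sup un alpha u1} we already know that $u_n < e^M u_1$, where $M := g_1'\big(C(\alpha)\big)$ and $C(\alpha)$ is an upper bound for $u_n$. Rearranging this inequality gives $u_1 > u_n / e^M$, so it suffices to produce a lower bound for $u_n$ that depends only on $\alpha$. But this is exactly the content of Corollary~\ref{Coro: cota inf un alpha}: under the hypothesis that $g$ is surjective and strictly increasing, any solution $(\alpha,u)$ of (\ref{ec: sistema_A}) satisfies $u_n > g^{-1}(\alpha)$.

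Concretely, I would first invoke Corollary~\ref{Coro: cota inf un alpha} to get $u_n > g^{-1}(\alpha)$; note the surjectivity/monotonicity hypothesis on $g$ is precisely what is assumed in the statement, so $g^{-1}$ is well defined on $\R_{>0}$ and $g^{-1}(\alpha)$ makes sense for the given positive $\alpha$. Then I would apply Lemma~\ref{Lema: cota sup un alpha u1}, which says $u_n < e^M u_1$ with $M = g_1'\big(C(\alpha)\big)$; since $e^M > 0$ this yields $u_1 > u_n e^{-M}$. Chaining the two inequalities gives
$$
u_1 > \frac{u_n}{e^M} > \frac{g^{-1}(\alpha)}{e^M},
$$
which is the desired bound.

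There is essentially no obstacle here: the lemma is a bookkeeping corollary that assembles the lower bound on $u_n$ (Corollary~\ref{Coro: cota inf un alpha}) and the $u_n$-versus-$u_1$ comparison (Lemma~\ref{Lema: cota sup un alpha u1}) into a single lower bound on $u_1$. The only point worth a word of care is that both ingredients require the same normalization: $C(\alpha)$ must be a genuine upper bound for $u_n$ valid for the solution under consideration, and $M$ is defined from that same $C(\alpha)$, so the constant appearing in the conclusion is consistent across the two applications. Since the hypotheses of the statement include exactly the surjectivity and strict monotonicity of $g$ needed for Corollary~\ref{Coro: cota inf un alpha}, and the upper bound $C(\alpha)$ is part of the data, the proof is immediate.
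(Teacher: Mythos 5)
Your proof is correct and is essentially identical to the paper's own argument: the paper likewise chains Corollary \ref{Coro: cota inf un alpha} and Lemma \ref{Lema: cota sup un alpha u1} to obtain $g^{-1}(\alpha) < u_n < e^{M}u_1$ and concludes immediately. Your extra remark about using the same $C(\alpha)$ (and hence the same $M$) in both ingredients is a sensible consistency check but adds nothing beyond the paper's reasoning.
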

\begin{proof}
From Lemma \ref{Lema: cota sup un alpha u1} and Corollary \ref{Coro: cota inf un alpha} we deduce
$$ g^{-1}(\alpha) < u_n < e^{M} u_1,$$
which immediately implies the statement of the lema.
\end{proof}

In the next lemma we obtain another upper bound of $u_n$ in terms of $u_1$ and $\alpha$. This upper bound will allow us to find an upper bound of $u_n$ in terms of $\alpha$.

\begin{lemma}\label{Lema: cota sup un alpha u1 bis}
Let $(\alpha,u) \in (\R_{>0})^{n+1}$ be a solution of {\rm (\ref{ec: sistema_A})} for $A=\alpha$. Then
$$
G(u_n) < G(u_1) + \frac{\alpha^2}{2},
$$
where $G$ is defined in (\ref{ec: definicion G}).

\noindent Moreover, if $G$ is surjective and strictly increasing, then
$$
u_n < G^{-1}\Big(G(u_1) + \frac{\alpha^2}{2}\Big).
$$

\end{lemma}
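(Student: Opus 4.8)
The plan is to derive the inequality $G(u_n) < G(u_1) + \alpha^2/2$ directly from the discrete energy conservation identity of Proposition~\ref{Prop: discrete energy conservation} together with the sign information already collected. Recall that identity reads
$$
\tfrac{1}{2}\alpha^2 g_2^2(u_n) - \big(G_1(u_n) - G_1(u_1)\big) = E + \tfrac{h^2}{8}\big(g_1^2(u_n) - g_1^2(u_1)\big),
$$
with $E \ge 0$ by Proposition~\ref{Prop: discrete energy conservation}. First I would observe that $g_1^2(u_n) - g_1^2(u_1) \ge 0$: indeed $u_n > u_1$ by Lemma~\ref{Lema: positividad}\eqref{Lema: positividad - item 2} (more precisely $U_k(u_1)$ is strictly increasing in $k$ by item~\eqref{Lema: positividad - item 1}), and $g_1$ is strictly increasing on $\Rpos$ by hypothesis, so $g_1(u_n) > g_1(u_1) > 0$ and hence the squared difference is nonnegative. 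Combining the two nonnegativity facts, the right-hand side is $\ge 0$, so
$$
G_1(u_n) - G_1(u_1) \le \tfrac{1}{2}\alpha^2 g_2^2(u_n).
$$

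Next I would divide through by $g_2^2(u_n) > 0$, which gives
$$
\frac{G_1(u_n)}{g_2^2(u_n)} - \frac{G_1(u_1)}{g_2^2(u_n)} \le \tfrac{1}{2}\alpha^2.
$$
The left-hand term is exactly $G(u_n)$ by definition~\eqref{ec: definicion G}. To reach the claimed bound I need to replace $G_1(u_1)/g_2^2(u_n)$ by $G_1(u_1)/g_2^2(u_1) = G(u_1)$; since $u_n > u_1 > 0$ and $g_2$ is strictly increasing, $g_2^2(u_n) > g_2^2(u_1) > 0$, hence $G_1(u_1)/g_2^2(u_n) < G_1(u_1)/g_2^2(u_1) = G(u_1)$, i.e. $-G_1(u_1)/g_2^2(u_n) > -G(u_1)$. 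Wait --- that direction is the wrong way for a clean substitution, so instead I would argue: from $G(u_n) - G_1(u_1)/g_2^2(u_n) \le \alpha^2/2$ and $G_1(u_1)/g_2^2(u_n) < G(u_1)$ we get $G(u_n) \le \alpha^2/2 + G_1(u_1)/g_2^2(u_n) < \alpha^2/2 + G(u_1)$, which is precisely the strict inequality $G(u_n) < G(u_1) + \alpha^2/2$. The strictness is ensured because the last step is strict.

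For the second assertion, assuming $G$ is surjective and strictly increasing on $\Rpos$, it has a strictly increasing inverse $G^{-1}$; applying it to both sides of $G(u_n) < G(u_1) + \alpha^2/2$ yields $u_n < G^{-1}\big(G(u_1) + \alpha^2/2\big)$ immediately. The only mild subtlety to check is that $G(u_1) + \alpha^2/2$ lies in the range of $G$, which is automatic from surjectivity onto $\Rpos$ (both $G(u_1) > 0$ and $\alpha^2/2 > 0$, so the sum is a positive real). I do not anticipate a genuine obstacle here; the main thing to get right is the ordering of inequalities in the substitution step described above, making sure one does not accidentally weaken a strict inequality into a non-strict one or reverse a sign when dividing by $g_2^2(u_n)$ versus $g_2^2(u_1)$. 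Everything else follows from Proposition~\ref{Prop: discrete energy conservation}, the monotonicity of $U_k$ in $k$ from Lemma~\ref{Lema: positividad}, and the standing monotonicity hypotheses on $g_1$ and $g_2$.
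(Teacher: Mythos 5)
Your proposal is correct and follows essentially the same route as the paper: it invokes the discrete energy conservation identity of Proposition~\ref{Prop: discrete energy conservation} together with $E\ge 0$, the strict monotonicity $u_n>u_1$ from Lemma~\ref{Lema: positividad}, and the monotonicity of $g_2^2$ to pass from $G_1(u_1)/g_2^2(u_n)$ to $G(u_1)$, then applies the strictly increasing inverse $G^{-1}$. The only cosmetic difference is where the strictness is harvested (you take it from the step $g_2^2(u_n)>g_2^2(u_1)$, the paper already from the positivity of the right-hand side of the identity), which is immaterial.
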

\begin{proof}
From Proposición \ref{Prop: discrete energy conservation} and Lema \ref{Lema: positividad}, we deduce the following inequality
$$G_1(u_n) - \frac{\alpha^2}{2}g_2^2(u_n) < G_1(u_1),$$
where $G_1$ is the primitive function of $g_1$ such that $G_1(0)=0$. Dividing for $g_2^2(u_n)$, we obtain
$$G(u_n) - \frac{\alpha^2}{2} < \frac{G_1(u_1)}{g_2^2(u_n)}.$$
Since $g_2^2$ is an increasing function, we conclude that
$$G(u_n)-\frac{\alpha^2}{2} < \frac{G_1(u_1)}{g^2_2(u_n)} \le G(u_1),$$
which prove the first part of the lemma.

Now, suppose that $G$ is surjective and strictly increasing, then $G$ is an invertible function and their inverse is strictly increasing. Combining this remark with the last inequality, we obtain
$$
u_n < G^{-1}\Big(G(u_1) + \frac{\alpha^2}{2}\Big),
$$
and the proof is complete.
\end{proof}

\noindent From this lemma we obtain upper bounds for $u_1$ and $u_n$ in terms of $\alpha$.
\begin{proposition}\label{Prop: cota sup u1 y un alpha}
Let $(\alpha,u) \in (\R_{>0})^{n+1}$ be a solution of {\rm (\ref{ec: sistema_A})} and let $g$ and $G$ be the functions defined in (\ref{ec: definicion g}) and (\ref{ec: definicion G}) respectively. Suppose that
\begin{itemize}
\item exists $d \in [0,1)$ such that $\big(\ln (G_1^d(x)/g^2_2(x))\big)' \ge 0$ for all $x>0$,
\item $G''(x) \ge 0$ for all $x>0$,
\end{itemize}
hold, where $G_1$ is the primitive function of $g_1$ such that $G_1(0)=0$. Then
$$
g^2(u_1) < \frac{\alpha^2}{1-d}.
$$
Moreover, if $g$ and $G$ are surjective functions, then
$$
u_1 < g^{-1}\Big(\frac{\alpha}{\sqrt{1-d}}\Big),
$$
and
$$
u_n < G^{-1}\Big(G\Big(g^{-1}\Big(\frac{\alpha}{\sqrt{1-d}}\Big)\Big) + \frac{\alpha^2}{2}\Big).
$$
\end{proposition}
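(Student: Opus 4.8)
The plan is to combine the discrete energy estimate already in hand (Lemma~\ref{Lema: cota sup un alpha u1 bis}, in the form $G(u_n)-G(u_1)<\alpha^2/2$) with two further ingredients: a pointwise lower bound for $G'$ extracted from the hypothesis on $\ln(G_1^d/g_2^2)$, and an elementary lower bound for $u_n-u_1$ coming from Lemma~\ref{Lema: positividad}. For the first ingredient I would rewrite $\big(\ln(G_1^d(x)/g_2^2(x))\big)'\ge 0$ as $2g_2'(x)/g_2(x)\le d\,g_1(x)/G_1(x)$ (using $G_1'=g_1$), whence
\[
\frac{G'(x)}{G(x)}=\frac{g_1(x)}{G_1(x)}-\frac{2g_2'(x)}{g_2(x)}\ \ge\ (1-d)\,\frac{g_1(x)}{G_1(x)},
\]
and, since $G=G_1/g_2^2$, this gives the pointwise bound $G'(x)\ge (1-d)\,g_1(x)/g_2^2(x)$ for every $x>0$, in particular at $x=u_1$.

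For the second ingredient I would use item~$(\ref{Lema: positividad - item 2})$ of Lemma~\ref{Lema: positividad} with $k=n$: since $g_1$ is increasing and $u_j\ge u_1$,
\[
u_n-u_1=h^2\Big(\tfrac{n-1}{2}g_1(u_1)+\sum_{j=2}^{n-1}(n-j)g_1(u_j)\Big)\ \ge\ h^2 g_1(u_1)\Big(\tfrac{n-1}{2}+\sum_{j=2}^{n-1}(n-j)\Big)=\tfrac12\,g_1(u_1),
\]
because the parenthesis equals $(n-1)^2/2$ and $h=1/(n-1)$. Now, since $G''\ge 0$, the tangent-line inequality $G(u_n)-G(u_1)\ge G'(u_1)(u_n-u_1)$ combined with the two bounds just obtained yields
\[
\frac{\alpha^2}{2}\ >\ G(u_n)-G(u_1)\ \ge\ (1-d)\,\frac{g_1(u_1)}{g_2^2(u_1)}\cdot\frac{g_1(u_1)}{2}\ =\ \frac{1-d}{2}\,g^2(u_1),
\]
which is the first assertion $g^2(u_1)<\alpha^2/(1-d)$.

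For the remaining assertions, items~$(\ref{Lema: Crecimientos 2 - item 3})$ and $(\ref{Lema: Crecimientos 2 - item 2})$ of Lemma~\ref{Lema: Crecimientos 2} show that $G$ and $g$ are strictly increasing; if they are moreover surjective, they are bijections of $\Rpos$ with strictly increasing inverses, so taking positive square roots in $g^2(u_1)<\alpha^2/(1-d)$ and applying $g^{-1}$ gives $u_1<g^{-1}\big(\alpha/\sqrt{1-d}\big)$, and substituting this into the ``moreover'' part of Lemma~\ref{Lema: cota sup un alpha u1 bis} (and using monotonicity of $G$ and $G^{-1}$) gives the stated bound for $u_n$. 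The one point needing care is the precise interlocking of the two quantitative estimates: the weight $g_1(u_1)/g_2^2(u_1)$ in the bound for $G'(u_1)$ has to be absorbed by a lower bound of order exactly $g_1(u_1)$ for $u_n-u_1$, and the reason this works uniformly in $n$ (equivalently in $h$) is that the telescoping sum in Lemma~\ref{Lema: positividad} contributes precisely the factor $(n-1)^2$ that cancels $h^2$, leaving the clean constant $1/2$; everything else (e.g.\ the minorization $g_1(u_j)\ge g_1(u_1)$) is routine given the positivity statements of Lemma~\ref{Lema: positividad}.
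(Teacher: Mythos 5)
Your proposal is correct and follows essentially the same route as the paper: the energy bound $G(u_n)-G(u_1)<\alpha^2/2$ from Lemma~\ref{Lema: cota sup un alpha u1 bis}, the lower bound $u_n-u_1\ge g_1(u_1)/2$ from Lemma~\ref{Lema: positividad}, convexity of $G$ to evaluate the derivative at $u_1$, and the hypothesis on $\ln(G_1^d/g_2^2)$ to obtain the factor $1-d$. The only cosmetic difference is that you isolate the pointwise bound $G'(x)\ge(1-d)g_1(x)/g_2^2(x)$ and use the tangent-line inequality, whereas the paper applies the Mean Value Theorem together with monotonicity of $G'$ and expands $G'(u_1)g_1(u_1)$ directly.
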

\begin{proof} Combining Lemma \ref{Lema: cota sup un alpha u1} and the Mean Value Theorem, we deduce that exists $\xi > 0$ between $u_1$ and $u_n$ such that
$$
G'(\xi)(u_n - u_1)= G(u_n)-G(u_1)<\frac{\alpha^2}{2}.
$$
By Lemma \ref{Lema: positividad}(\ref{Lema: positividad - item 2}), we have that
$$
(u_n - u_1)= h^2\Big(\frac{n-1}{2}g_1(u_1) + \sum_{j=2}^{n-1}(n-j)g_1\big(u_j\big)\Big) > \frac{g_1(u_1)}{2}>0.
$$
Combining both inequalities, we obtain that
$$
G'(\xi) \frac{g_1(u_1)}{2}< G'(\xi)(u_n - u_1) < \frac{\alpha^2}{2}.
$$
Since $G''(x) \ge 0$ for all $x>0$, we see that $G'$ is an increasing function. Furthermore, we have the following inequality:
\begin{equation}\label{ec: 1er cota inf alpha^2}
\begin{array}{rcl}
\alpha^2 > G'(u_1)g_1(u_1) & = & \Big(\dfrac{g_1(u_1)g^2_2(u_1)-G_1(u_1)2g_2(u_1)g'_2(u_1)}{g^4_2(u_1)}\Big)g_1(u_1)\\[2ex]
  &=&\Big(1-\dfrac{G_1(u_1)2g_2(u_1)g'_2(u_1)}{g_1(u_1)g^2_2(u_1)}\Big)\dfrac{g^2_1(u_1)}{g^2_2(u_1)}\\[2ex]
  &=&\Big(1-\dfrac{\big(\ln(g^2_2(u_1))\big)'}{\big(\ln(G_1(u_1))\big)'}\Big)\dfrac{g^2_1(u_1)}{g^2_2(u_1)}.
\end{array}
\end{equation}
Taking into account the first condition of the statement, we deduce that
$$
\begin{array}{rcl}
0 \le \big(\ln(G^d_1(x)/g^2_2(x))\big)' &=& d\big(\ln(G_1(x))\big)'-\big(\ln(g^2_2(x))\big)'\\
&=&\big(\ln(G_1(x))\big)'\Big(d-\dfrac{\big(\ln(g^2_2(x))\big)'}{\big(\ln(G_1(x))\big)'}\Big).
\end{array}
$$
Since $\big(\ln(G_1(x))\big)'=g_1(x)/G_1(x) > 0$, we conclude that
\begin{equation}\label{ec: derivada log menor a c}
\dfrac{\big(\ln(g^2_2(x))\big)'}{\big(\ln(G_1(x))\big)'} \le d.
\end{equation}
Combining (\ref{ec: 1er cota inf alpha^2}) and (\ref{ec: derivada log menor a c}), we obtain
\begin{equation}\label{ec: 2da cota inf alpha^2}
\begin{array}{rcl}
\alpha^2 > (1-d)\dfrac{g^2_1(u_1)}{g^2_2(u_1)}=(1-d)g^2(u_1),
\end{array}
\end{equation}
and the first assertion of the proposition is proved.

Now, suppose that $g$ and $G$ are surjective functions. From (\ref{Lema: Crecimientos 2}), we have that $g$ and $G$ are strictly increasing functions. Combining this remark with (\ref{ec: 2da cota inf alpha^2}) and Lemma \ref{Lema: cota sup un alpha u1 bis}, we obtain the desired upper bounds for $u_1$ and $u_n$.
\end{proof}

Combining Proposition \ref{Prop: cota sup u1 y un alpha} and Lemma \ref{Lema: Cota inf u1 alpha} we obtain the following result.

\begin{lemma}\label{Lema: cota sup un alpha bis}
Let $(\alpha,u) \in (\R_{>0})^{n+1}$ be a solution of {\rm (\ref{ec: sistema_A})} and let $g$ and $G$ be the functions defined in (\ref{ec: definicion g}) and (\ref{ec: definicion G}) respectively. Suppose that
\begin{itemize}
\item $G$ and $g$  are surjective functions,
\item exists $d \in [0,1)$ such that $\big(\ln (G_1^d(x)/g_2^2(x))\big)' \ge 0$ for all $x>0$,
\item $G''(x) \ge 0$ for all $x>0$,
\end{itemize}
hold, where $G_1$ is the primitive function of $g_1$ such that $G_1(0)=0$. Then
$$
u_1 < g^{-1} \big(\alpha \widehat{C}(\alpha)\big),
$$
where
$$
\widehat{C}(\alpha):= 1 + \dfrac{g_2'\big(C_1(\alpha)\big) \alpha^2}{2g_2\big(g^{-1}(\alpha)/e^M\big)G'\big(g^{-1}(\alpha)/e^M\big)},
$$
with
$$
C_1(\alpha) := G^{-1}\Big(G\Big(g^{-1}\Big(\frac{\alpha}{\sqrt{1-d}}\Big)\Big) + \frac{\alpha^2}{2}\Big),
$$
and $M:=g'_1(C_1(\alpha))$.
Furthermore,
$$
u_n < G^{-1}\Big(G\Big(g^{-1}\Big(\alpha\widehat{C}(\alpha)\Big)\Big) + \frac{\alpha^2}{2}\Big).
$$
\end{lemma}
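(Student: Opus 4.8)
The plan is to sharpen the coarse bound $g^2(u_1)<\alpha^2/(1-d)$ of Proposition \ref{Prop: cota sup u1 y un alpha} into the estimate $g(u_1)<\alpha\,\widehat{C}(\alpha)$ by feeding the discrete energy identity back into the rough upper and lower bounds for $u_1$ and $u_n$ already at our disposal, and then to transport the resulting bound on $u_1$ to one on $u_n$ via Lemma \ref{Lema: cota sup un alpha u1 bis}. First I would record the auxiliary quantities in the right order. By Proposition \ref{Prop: cota sup u1 y un alpha}, $C_1(\alpha)$ is an upper bound for $u_n$ (hence for $u_1$), so with $M:=g_1'(C_1(\alpha))$ Lemma \ref{Lema: Cota inf u1 alpha} gives the lower bound $u_1>L:=g^{-1}(\alpha)/e^M$; moreover $g$ and $G$ are strictly increasing, hence $g^{-1}$ and $G^{-1}$ are well defined and increasing, by Lemma \ref{Lema: Crecimientos 2}(\ref{Lema: Crecimientos 2 - item 2}) and (\ref{Lema: Crecimientos 2 - item 3}) using the hypothesis on $d$, exactly as in the proof of Proposition \ref{Prop: cota sup u1 y un alpha}.

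The key step is to produce the clean inequality $g_1(u_1)<\alpha\,g_2(u_n)$. As in the proof of Lemma \ref{Lema: cota sup un alpha u1 bis}, Proposition \ref{Prop: discrete energy conservation} together with Lemma \ref{Lema: positividad} yields $G_1(u_n)-G_1(u_1)<\tfrac{\alpha^2}{2}g_2^2(u_n)$; since $g_1$ is increasing and $G_1'=g_1$ we have $G_1(u_n)-G_1(u_1)=\int_{u_1}^{u_n}g_1\ge g_1(u_1)(u_n-u_1)$, and by Lemma \ref{Lema: positividad}(\ref{Lema: positividad - item 2}) we have $u_n-u_1>\tfrac12 g_1(u_1)$. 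Chaining these gives $\tfrac{\alpha^2}{2}g_2^2(u_n)>\tfrac12 g_1^2(u_1)$, i.e.\ $g_1(u_1)<\alpha\,g_2(u_n)$, and therefore $g(u_1)<\alpha\,g_2(u_n)/g_2(u_1)$. It then remains to estimate the ratio $g_2(u_n)/g_2(u_1)$. By the Mean Value Theorem applied to $G$ we get $G(u_n)-G(u_1)=G'(\xi)(u_n-u_1)$ for some $\xi\in(u_1,u_n)$; since $G''\ge 0$ and $\xi>u_1>L$ we have $G'(\xi)>G'(L)$, so Lemma \ref{Lema: cota sup un alpha u1 bis} gives $u_n-u_1<\alpha^2/(2G'(L))$. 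Applying the Mean Value Theorem to $g_2$ and using that $g_2'$ is increasing together with $u_n<C_1(\alpha)$ and $g_2(u_1)>g_2(L)$, one obtains $g_2(u_n)/g_2(u_1)=1+(g_2(u_n)-g_2(u_1))/g_2(u_1)<1+g_2'(C_1(\alpha))\alpha^2/(2g_2(L)G'(L))=\widehat{C}(\alpha)$. Combining the last two displays gives $g(u_1)<\alpha\,\widehat{C}(\alpha)$, whence $u_1<g^{-1}(\alpha\,\widehat{C}(\alpha))$ by monotonicity of $g^{-1}$.

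For the ``furthermore'' statement I would simply invoke Lemma \ref{Lema: cota sup un alpha u1 bis}, which gives $u_n<G^{-1}(G(u_1)+\alpha^2/2)$; since $G$ and $G^{-1}$ are increasing and $u_1<g^{-1}(\alpha\,\widehat{C}(\alpha))$, this turns into $u_n<G^{-1}\big(G(g^{-1}(\alpha\,\widehat{C}(\alpha)))+\alpha^2/2\big)$, which is the claimed bound.

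I expect the main obstacle to be the cancellation in the key step: everything hinges on observing that the factor $u_n-u_1$ one picks up when lower-bounding $G_1(u_n)-G_1(u_1)$ by $g_1(u_1)(u_n-u_1)$ can itself be bounded below by $\tfrac12 g_1(u_1)$, so that $u_n-u_1$ cancels and leaves the comparison $g_1(u_1)<\alpha\,g_2(u_n)$ in exactly the form needed to pass to $g(u_1)$. Everything after that is routine Mean Value Theorem bookkeeping with the already-established coarse bounds $L<u_1$ and $u_n<C_1(\alpha)$; the only delicate point is introducing $C_1(\alpha)$, $M$ and $L$ in the correct order so that the estimates land precisely on $G'(L)$, $g_2(L)$ and $g_2'(C_1(\alpha))$, matching the definition of $\widehat{C}(\alpha)$.
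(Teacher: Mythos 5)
Your proposal is correct and follows essentially the same route as the paper: the paper likewise reduces everything to $g(u_1)<\alpha\,g_2(u_n)/g_2(u_1)$ and then bounds the ratio $g_2(u_n)/g_2(u_1)$ by $\widehat{C}(\alpha)$ via the Mean Value Theorem together with the coarse bounds $u_1>g^{-1}(\alpha)/e^{M}$ and $u_n<C_1(\alpha)$ from Lemma \ref{Lema: Cota inf u1 alpha} and Proposition \ref{Prop: cota sup u1 y un alpha}, finishing with Lemma \ref{Lema: cota sup un alpha u1 bis} for the bound on $u_n$. The only (harmless) deviations are that you obtain $g_1(u_1)<\alpha g_2(u_n)$ from the discrete energy identity and the estimate $u_n-u_1>\tfrac12 g_1(u_1)$, whereas the paper reads it off directly from the telescoped sum $\alpha g_2(u_n)=h\big(\tfrac12 g_1(u_1)+\sum_{k=2}^{n-1}g_1(u_k)+\tfrac12 g_1(u_n)\big)$, and that you apply the Mean Value Theorem to $G$ and then to $g_2$ instead of to $g_2$ and $G^{-1}$.
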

\begin{proof}
Let $(\alpha,u) \in (\R_{>0})^{n+1}$ be a solution of {\rm (\ref{ec: sistema_A})}. From Lemmas
\ref{Lema: positividad} and \ref{Lema: cota sup un alpha u1 bis} we deduce the inequalities
\begin{itemize}
\item $g_1(u_1)< h\Big(\dfrac{1}{2}g_1(u_1)+ g_1(u_2) + \cdots + g_1(u_{n-1}) + \dfrac{1}{2}g_1(u_n)\Big)=\alpha g_2(u_n),$
\item $ u_n <  G^{-1}\Big(G(u_1) + \frac{\alpha^2}{2}\Big).$
\end{itemize}
Combining both inequalities we obtain
$$
g(u_1)< \alpha \dfrac{g_2(u_n)}{g_2(u_1)} < \alpha \dfrac{g_2\Big(
G^{-1}\Big(G(u_1) + \frac{\alpha^2}{2}\Big)\Big)}{g_2(u_1)}.
$$
Since $g_2'$ is an increasing function in $\Rpos$ and $(G^{-1})'$ is a decreasing function in $\Rpos$, by the Mean Value Theorem, we obtain the following estimates
\begin{eqnarray*}
g(u_1)&<& \alpha \dfrac{g_2(u_1) + g_2'\Big(G^{-1}\Big(G(u_1) + \frac{\alpha^2}{2}\Big)\Big) \Big(G^{-1}\Big(G(u_1) + \frac{\alpha^2}{2}\Big)-u_1\Big)}{g_2(u_1)} \\
&<& \alpha \dfrac{g_2(u_1) + g_2'\Big(G^{-1}\Big(G(u_1) + \frac{\alpha^2}{2}\Big)\Big) (G^{-1})'(G(u_1))\frac{\alpha^2}{2}}{g_2(u_1)}\\
&<& \alpha \Big(1 + \dfrac{g_2'\Big(G^{-1}\Big(G(u_1) + \frac{\alpha^2}{2}\Big)\Big) \alpha^2}{2g_2(u_1)G'(u_1)}\Big).
\end{eqnarray*}
From Proposition \ref{Prop: cota sup u1 y un alpha} and Lemma \ref{Lema: Cota inf u1 alpha} we conclude that
$$
u_1 < g^{-1}\big(\alpha \widehat{C}(\alpha)\big),
$$
where
$$
\widehat{C}(\alpha):= 1 + \dfrac{g_2'\big(C_1(\alpha)\big) \alpha^2}{2g_2\big(g^{-1}(\alpha)/e^M\big)G'\big(g^{-1}(\alpha)/e^M\big)},
$$
with $M:=g'_1(C_1(\alpha))$ and
$$
C_1(\alpha) := G^{-1}\Big(G\Big(g^{-1}\Big(\frac{\alpha}{\sqrt{1-d}}\Big)\Big) + \frac{\alpha^2}{2}\Big).
$$
Combining this remark with Lemma \ref{Lema: cota sup un alpha u1 bis} we obtain
$$
u_n < G^{-1}\Big(G\Big(g^{-1}\Big(\alpha\widehat{C}(\alpha)\Big)\Big) + \frac{\alpha^2}{2}\Big),
$$
which immediately implies the statement of the lemma.
\end{proof}

%
%
\subsection{Existence and uniqueness}\label{subseccion: existencia_y_unicidad}
Let $P:(\Rpos)^2\to\R$ be the nonlinear map defined by
\begin{equation}\label{ec: funcion_G}
P(\alpha,u_1) := \mbox{$\frac{1}{h}$} \big(U_{n-1}(u_1)-U_n(u_1)\big) - \mbox{$\frac{h}{2}$} g_1\big(U_n(u_1)\big) + \alpha g_2\big(U_n(u_1)\big).
\end{equation}
Observe that $P(A,U_1)=0$ represents the minimal equation satisfied by the coordinates $(\alpha,u_1)$ of any (complex)
solution of the nonlinear system (\ref{ec: sistema_A}). Therefore, for fixed $\alpha \in \R_{>0}$, the positive roots of $P(\alpha,U_1)$ are the values of $u_1$ we want to obtain. Furthermore, from the parametrizations (\ref{ec: relaciones recursivas}) of the coordinates $u_2 \klk u_n$ of a given solution $(\alpha,u_1\klk u_n) \in (\Rpos)^{n+1}$ of (\ref{ec: sistema_A}) in terms of $u_1$, we conclude that the number of positive roots of $P(\alpha,U_1)$ determines the number of positive solutions of (\ref{ec: sistema_A}) for such a value of $\alpha$.

Since $P(A,U_1)$ is a continuous function in $(\R_{>0})^2$, as in \cite[Proposition 4]{Dratman11a} we have the following result:
\begin{proposition}\label{Prop: existencia}
Fix $\alpha> 0$ and $n \in \N$. If the function $g$ defined in (\ref{ec: definicion g}) is surjective, then {\rm (\ref{ec: sistema_A})} has a positive solution with $A=\alpha$.
\end{proposition}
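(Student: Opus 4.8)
The plan is to reduce the existence of a positive solution of (\ref{ec: sistema_A}) to the existence of a positive root of the single-variable function $u_1 \mapsto P(\alpha, u_1)$ defined in (\ref{ec: funcion_G}), and then to establish that root via the Intermediate Value Theorem. The key observation, already made in the excerpt, is that the recursive parametrizations (\ref{ec: relaciones recursivas}) of $u_2 \klk u_n$ in terms of $u_1$ mean that any positive root $u_1^*$ of $P(\alpha, \cdot)$ automatically yields a positive solution $(u_1^*, U_2(u_1^*) \klk U_n(u_1^*))$ of (\ref{ec: sistema_A}); by Lemma \ref{Lema: positividad}(\ref{Lema: positividad - item 2}) all these coordinates are indeed positive. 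So the whole problem is to show $P(\alpha, U_1)$ has a positive root.

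First I would rewrite $P(\alpha, u_1)$ using Lemma \ref{Lema: positividad}(\ref{Lema: positividad - item 1}), which gives $\frac{1}{h}(U_n - U_{n-1})(u_1) = h\big(\frac12 g_1(u_1) + \sum_{j=2}^{n-1} g_1(U_j(u_1))\big)$; hence
\[
P(\alpha, u_1) = -h\Big(\tfrac12 g_1(u_1) + \sum_{j=2}^{n-1} g_1(U_j(u_1)) + \tfrac12 g_1(U_n(u_1))\Big) + \alpha\, g_2(U_n(u_1)).
\]
In other words, $P(\alpha, u_1) = 0$ is exactly the equation $\alpha g_2(u_n) = \text{(trapezoidal sum of } g_1)$, which was the discrete analogue of the boundary condition discussed after (\ref{ec: continuous energy conservation}). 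Now examine the two endpoints of the interval $(0, \infty)$. As $u_1 \to 0^+$, continuity of $g_1, g_2$ and of each $U_j$, together with $g_1(0) = g_2(0) = 0$, forces $P(\alpha, u_1) \to 0$; but more precisely, for small $u_1 > 0$ one has $U_j(u_1) \approx u_1$ (the $h^2$-correction terms in Lemma \ref{Lema: positividad}(\ref{Lema: positividad - item 2}) are lower order), so $P(\alpha, u_1) \approx \alpha g_2(u_1) - h\cdot(n-1) g_1(u_1) \approx \big(\alpha g_2'(0) - g_1'(0)\big)u_1 + o(u_1)$, using $h(n-1) = 1$. Since $g$ is surjective (and $g(x) = g_1(x)/g_2(x) \to g_1'(0)/g_2'(0)$ as $x\to 0^+$ by L'Hôpital), the range of $g$ being all of $\Rpos$ forces $g_1'(0)/g_2'(0) = 0$, i.e. $g_1'(0) = 0 < \alpha g_2'(0)$, so $P(\alpha, u_1) > 0$ for all sufficiently small $u_1 > 0$. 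At the other end, as $u_1 \to +\infty$: again by surjectivity of $g$, $g(x) = g_1(x)/g_2(x) \to +\infty$, so $g_1$ eventually dominates $g_2$; combined with $U_n(u_1) \ge u_1$ and monotonicity, the trapezoidal term $h\big(\tfrac12 g_1(u_1) + \cdots + \tfrac12 g_1(U_n)\big) \ge \tfrac{h}{2} g_1(u_1)$ grows strictly faster than $\alpha g_2(U_n(u_1))$ once $u_1$ is large enough — one uses $\alpha g_2(U_n) / g_1(U_n) = \alpha / g(U_n) \to 0$ while $g_1(U_n) \ge g_1(u_1)$. Hence $P(\alpha, u_1) < 0$ for $u_1$ large. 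By the Intermediate Value Theorem applied to the continuous function $P(\alpha, \cdot)$ on $(\,u_1^{\text{small}},\, u_1^{\text{large}}\,)$, there is a positive $u_1^*$ with $P(\alpha, u_1^*) = 0$, and the associated tuple solves (\ref{ec: sistema_A}).

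The main obstacle is making the two limiting estimates fully rigorous without circular reasoning: in particular, the behavior of $P(\alpha, u_1)$ as $u_1 \to +\infty$ requires controlling the intermediate coordinates $U_j(u_1)$ for $2 \le j \le n-1$, which depend on $u_1$ through the nonlinear recursion, and confirming that the dominant balance is genuinely between $\tfrac{h}{2}g_1(u_1)$ (or $\tfrac{h}{2}g_1(U_n)$) and $\alpha g_2(U_n)$. The clean way around this, and what I expect the actual argument to do following \cite[Proposition 4]{Dratman11a}, is to factor out $g_2(U_n(u_1))$ and study the sign of $P(\alpha, u_1)/g_2(U_n(u_1))$; using Lemma \ref{Lema: positividad} to bound the trapezoidal sum below by $\tfrac{h}{2}g_1(U_n(u_1))$ gives $P(\alpha,u_1)/g_2(U_n(u_1)) \le \alpha - \tfrac{h}{2} g(U_n(u_1))$, and since $U_n(u_1) \ge u_1 \to \infty$ and $g$ is surjective hence unbounded, this is negative for large $u_1$; the small-$u_1$ side is handled by the derivative computation above (or simply by noting $P(\alpha, u_1)/g_2(U_n(u_1)) \to \alpha > 0$ is false in general, so one really does need $g_1'(0)=0$, which is exactly what surjectivity of $g$ delivers). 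Continuity of $P$ on $(\Rpos)^2$ — stated in the excerpt just before the proposition — then closes the argument.
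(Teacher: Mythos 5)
Your overall route---reduce existence to a sign change of the scalar function $u_1\mapsto P(\alpha,u_1)$, i.e.\ to solving $A(u_1)=\alpha$ where $A(u_1)$ is the trapezoidal sum divided by $g_2\big(U_n(u_1)\big)$, and then invoke the Intermediate Value Theorem---is exactly the mechanism the paper relies on (it is the same device used in the proof of Lemma \ref{Lema: Preimagen=intervalo}, via the two-sided bound $\frac{h}{2}g\big(U_n(u_1)\big)\le A(u_1)\le g\big(U_n(u_1)\big)$). Your treatment of the endpoint $u_1\to+\infty$ is correct: $P(\alpha,u_1)/g_2\big(U_n(u_1)\big)\le \alpha-\frac{h}{2}g\big(U_n(u_1)\big)$, $U_n(u_1)\ge u_1$, and $g$ increasing and surjective, hence unbounded, force negativity for large $u_1$.

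The gap is at the endpoint $u_1\to 0^+$. Your primary argument linearizes, $P(\alpha,u_1)\approx\big(\alpha g_2'(0)-g_1'(0)\big)u_1+o(u_1)$, and concludes positivity from $g_1'(0)=0<\alpha g_2'(0)$. But nothing in the hypotheses gives $g_2'(0)>0$: in the paper's prototype $g_2(x)=x^q$ with $q\ge 2$ (which satisfies $g_2'>0$, $g_2''>0$, $g_2'''\ge 0$ on $\Rpos$) one has $g_1'(0)=g_2'(0)=0$, the linear term vanishes identically, and the expansion decides nothing; also the step ``$g$ surjective forces $g_1'(0)/g_2'(0)=0$'' is meaningless when both derivatives vanish. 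Your parenthetical fallback is likewise off: the limit $P(\alpha,u_1)/g_2\big(U_n(u_1)\big)\to\alpha$ is not ``false in general''---it is precisely the statement to prove, and it does not follow from $g_1'(0)=0$ alone. The clean argument is the upper half of the displayed bound: the trapezoidal weights sum to $h(n-1)=1$ and $g_1$ is increasing with $U_1<\cdots<U_n$, so $A(u_1)\le g_1\big(U_n(u_1)\big)/g_2\big(U_n(u_1)\big)=g\big(U_n(u_1)\big)$; each $U_k$ is continuous with $U_k(0)=0$ (because $g_1(0)=0$), so $U_n(u_1)\to 0$ as $u_1\to 0^+$; and since $g$ is strictly increasing (a standing assumption of the paper, needed here in addition to surjectivity) and onto $\Rpos$, one has $\lim_{x\to 0^+}g(x)=\inf g=0$. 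Hence $P(\alpha,u_1)/g_2\big(U_n(u_1)\big)=\alpha-A(u_1)\to\alpha>0$, giving the required sign for small $u_1$ with no reference to $g_1'(0)$ or $g_2'(0)$. With that replacement your proof closes and coincides with the intended one.
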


In order to establish the uniqueness, we prove that the homotopy path that we obtain by moving the parameter $\alpha$ in $\Rpos$ is smooth. For this purpose, we show that the rational function $A(U_1)$ implicitly defined by the
equation $P(A,U_1)=0$ is increasing. We observe that an explicit expression for this function in terms of $U_1$ is obtained in (\ref{ec: relaciones recursivas}).
\begin{theorem}\label{Teo: derivada alpha positiva}
Let $\mathcal{A} > 0$ be a given constant and let $A(U_1)$ be the rational function of (\ref{ec: relaciones recursivas}). Let $g$ and $G$ be the functions defined in (\ref{ec: definicion g}) and (\ref{ec: definicion G}) respectively. Suppose that
\begin{itemize}
\item $G$ and $g$  are surjective functions,
\item exists $d \in [0,1)$ such that $\big(\ln (G_1^d(x)/g_2^2(x))\big)' \ge 0$ for all $x>0$,
\item $G''(x) \ge 0$ for all $x>0$,
\end{itemize}
hold, where $G_1$ is the primitive function of $g_1$ such that $G_1(0)=0$. Then there exists $M(\mathcal{A}) > 0$ such that the condition $A'(u_1) >0$ is satisfied for $n > 1 + M(\mathcal{A})/(2-2d)$ and $u_1 \in A^{-1}\big((0,\mathcal{A}]\big)\cap \R_{>0}$.
\end{theorem}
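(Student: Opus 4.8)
My plan is to differentiate $A(U_1)$, reduce the sign of $A'(u_1)$ to a single scalar inequality, and then control the two sides of that inequality by differentiating the discrete conservation law and invoking the bounds of Section~\ref{subseccion: cotas}. Write $u_k:=U_k(u_1)$ and $v_k:=U_k'(u_1)$. Since $A(U_1)$ is implicitly defined by $P(A,U_1)=0$ and $\partial P/\partial A=g_2(U_n)>0$, implicit differentiation gives
\[
A'(u_1)=\frac{1}{g_2(u_n)}\Big(\tfrac{1}{h}(v_n-v_{n-1})+\tfrac{h}{2}g_1'(u_n)v_n-A(u_1)\,g_2'(u_n)v_n\Big),
\]
so $A'(u_1)>0$ is equivalent to $\tfrac1h(v_n-v_{n-1})+\tfrac h2 g_1'(u_n)v_n>A(u_1)g_2'(u_n)v_n$. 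Using $v_n-v_{n-1}=h^2\big(\tfrac12 g_1'(u_1)+\sum_{j=2}^{n-1}g_1'(u_j)v_j\big)$ from Lemma~\ref{Lema: positividad} and the last equation $A(u_1)g_2(u_n)=\tfrac1h(u_n-u_{n-1})+\tfrac h2 g_1(u_n)$, this becomes a comparison, weighted by $g_2(u_n)$ and $g_2'(u_n)v_n$, between the trapezoidal-type sums $\tfrac12 g_1'(u_1)+\sum_{j=2}^{n-1}g_1'(u_j)v_j+\tfrac12 g_1'(u_n)v_n$ and $\tfrac12 g_1(u_1)+\sum_{j=2}^{n-1}g_1(u_j)+\tfrac12 g_1(u_n)$.

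The key step is to obtain a workable closed form for $A'(u_1)$. I would differentiate the identity of Lemma~\ref{Lema: derivada discreta vs trapecios} (with $m=n$) with respect to $u_1$, using that the trapezoidal sum of $g_1$ over $[u_1,u_n]$ equals $G_1(u_n)-G_1(u_1)+E$ with $E\ge0$ increasing (Proposition~\ref{Prop: discrete energy conservation}), and simplifying with $u_2-u_1=\tfrac{h^2}{2}g_1(u_1)$, $u_n-u_{n-1}=h^2\big(\tfrac12 g_1(u_1)+\sum_{j=2}^{n-1}g_1(u_j)\big)$ and the last equation of the system. The recursion corrections then cancel and one is led to an identity of the form
\[
A(u_1)\,g_2(u_n)^2\,A'(u_1)=g_1(u_n)v_n-g_1(u_1)+E'(u_1)+\tfrac{h^2}{4}\big(g_1'(u_n)g_1(u_n)v_n-g_1'(u_1)g_1(u_1)\big)-A(u_1)^2g_2(u_n)g_2'(u_n)v_n.
\]
As $A(u_1),g_2(u_n)>0$, it remains to show that the right-hand side is positive; the terms $E'(u_1)\ge0$ and $\tfrac{h^2}{4}\big(g_1'(u_n)g_1(u_n)v_n-g_1'(u_1)g_1(u_1)\big)>0$ (by $u_n>u_1$, $v_n>1$ and monotonicity of $g_1,g_1'$) work in our favor.

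Next I would use Proposition~\ref{Prop: discrete energy conservation} to replace $A(u_1)^2g_2(u_n)^2$ by $2\big(G_1(u_n)-G_1(u_1)\big)+2E+\tfrac{h^2}{4}\big(g_1^2(u_n)-g_1^2(u_1)\big)$, and then bound $g_2'(u_n)/g_2(u_n)$ from above via the hypothesis: $\big(\ln(G_1^d/g_2^2)\big)'\ge0$ yields $\tfrac{2g_2'(x)}{g_2(x)}\le d\,\tfrac{g_1(x)}{G_1(x)}$ for $x>0$ (cf.\ the proof of Proposition~\ref{Prop: cota sup u1 y un alpha}), so that $A(u_1)^2g_2(u_n)g_2'(u_n)v_n\le d\,g_1(u_n)v_n\big(1-\tfrac{G_1(u_1)}{G_1(u_n)}\big)$ plus contributions from the $E$- and $h^2$-terms. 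After substitution, positivity of $A'(u_1)$ reduces to an inequality whose principal part is $(1-d)g_1(u_n)v_n+d\,g_1(u_n)v_n\,\tfrac{G_1(u_1)}{G_1(u_n)}\ge g_1(u_1)$, i.e.\ essentially to a lower bound for $v_n$. To close it I would combine: the bounds of Section~\ref{subseccion: cotas} — since $A(u_1)\le\mathcal A$, both $u_1$ and $u_n$ lie in a compact set depending only on $\mathcal A$ (Lemma~\ref{Lema: cota sup un alpha bis}), and $u_n<e^{M}u_1$ with $M:=g_1'\big(C(\mathcal A)\big)$ (Lemma~\ref{Lema: cota sup un alpha u1}), so $G_1(u_1)/G_1(u_n)$ is bounded below by a positive constant; the estimate $v_n-1\ge\tfrac12 g_1'(u_1)$ (and its finer form) from Lemma~\ref{Lema: positividad}(\ref{Lema: positividad - item 4}); the convexity of $g_1$ and $g_2$ (so $g_1(x)\le xg_1'(x)$, $\tfrac12 g_1^2<g_1'G_1$); and the assumption $n-1>M(\mathcal A)/(2-2d)$, i.e.\ $\tfrac h2 M(\mathcal A)<1-d$ with $M(\mathcal A):=g_1'\big(C(\mathcal A)\big)$, which is exactly what lets the discretization errors be absorbed by the margin $1-d$.

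The main obstacle is this final estimate: making ``$v_n$ is large enough'' rigorous and uniform over $\alpha=A(u_1)\in(0,\mathcal A]$ and over $n>1+M(\mathcal A)/(2-2d)$. This is where all the structural hypotheses (surjectivity and monotonicity of $g$ and $G$, the $d$-condition, $G''\ge0$, convexity of $g_1,g_2$) and the precise threshold on $n$ are consumed; in particular the factor $2-2d=2(1-d)$ in the threshold is precisely the margin available in $\tfrac{2g_2'}{g_2}\le d\,\tfrac{g_1}{G_1}$ once it is combined with the discrete conservation law.
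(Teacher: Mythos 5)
Your setup is sound and closely parallels the paper's strategy: your implicit-differentiation formula for $A'$, your differentiated identity $AA'g_2^2(u_n)=g_1(u_n)v_n-g_1(u_1)+E'+\frac{h^2}{4}\big(g_1'(u_n)g_1(u_n)v_n-g_1'(u_1)g_1(u_1)\big)-A^2g_2(u_n)g_2'(u_n)v_n$ (which is just the derivative of the discrete conservation law of Proposition \ref{Prop: discrete energy conservation}), and the bound $\frac{2g_2'(x)}{g_2(x)}\le d\,\frac{g_1(x)}{G_1(x)}$ coming from the $d$-hypothesis are all correct, and the paper does essentially the same things (after first dividing the conservation law by $G_1(U_n)$ and discarding some terms via Lemmas \ref{Lema: Crecimientos} and \ref{Lema: Crecimientos 2}). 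But the decisive step is missing, and you say so yourself ("the main obstacle is this final estimate"). After your substitution the dangerous term is $-d\,v_n\frac{g_1(u_n)}{G_1(u_n)}E$, i.e.\ the trapezoidal error $E$ weighted by the same factor as the energy, and your "principal part" $(1-d)g_1(u_n)v_n+d\,g_1(u_n)v_n\frac{G_1(u_1)}{G_1(u_n)}\ge g_1(u_1)$ is essentially an equality in the small-$\alpha$ regime ($u_n/u_1\to1$, $v_n\to1$ when $g_1'(0)=0$), and for $d$ close to $1$ it can even fail on its own; so there is no spare margin there to absorb the $E$-term. In particular the reduction "essentially to a lower bound for $v_n$" misidentifies the difficulty: uniformly one only has $v_n\ge1$ (your $v_n-1\ge\frac12 g_1'(u_1)$ is correct but $g_1'(u_1)$ can be arbitrarily small), and no lower bound on $v_n$ closes the argument.

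What actually closes it in the paper is a quantitative comparison of the error with the trapezoidal sum itself: splitting off half of the energy term, the problem reduces to $\frac{1-d}{2}T_k-E_k>0$ on each subinterval, and this is proved via the trapezoid error bound $E_k\le\frac{g_1'(U_k)+g_1'(U_{k+1})}{8}(U_{k+1}-U_k)^2$ together with $U_{k+1}-U_k\le h\,\frac{g_1(U_k)+g_1(U_{k+1})}{2}$ (from Lemma \ref{Lema: positividad}), giving $E_k\le\frac{h\,g_1'(U_n)}{4}T_k$; then the a priori bound of Lemma \ref{Lema: cota sup un alpha bis} yields $g_1'(U_n)\le M(\mathcal{A})$ uniformly on $A^{-1}\big((0,\mathcal{A}]\big)$, so $E\le\frac{1-d}{2}T$ exactly when $n-1\ge M(\mathcal{A})/(2-2d)$. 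You correctly guessed the form of the threshold ($\frac{h}{2}M(\mathcal{A})\le 1-d$) and that it should "absorb the discretization error," but you give no mechanism by which $E$ is compared to $T$ (or to the energy), and without that the proof does not go through; this estimate is the heart of the theorem, not a routine verification, so the proposal as it stands has a genuine gap.
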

\begin{proof}
Let $U_1, U_2\klk U_n, A$ be the functions defined in (\ref{ec: relaciones recursivas}). For $u_1>0$, we denote by
$I(u_1) := G_1(U_n(u_1))-G_1(U_1(u_1))$ the integral of the function $g_1$ in $[U_1(u_1),U_n(u_1)]$, and by $T(u_1)$ the trapezoidal rule applied to $I(u_1)$, with the nodes $U_1(u_1),$ $ U_2(u_1),
\klk U_n(u_1)$. More precisely, $T$ is define as follows:
$$
T := \sum_{k=1}^{n-1}
\dfrac{g_1(U_{k+1})+g_1(U_k)}{2}(U_{k+1}-U_k).
$$
Finally, set $E:= T-I$. Combining Proposition \ref{Prop: discrete energy conservation} and the convexity of $g_1$, we deduce that $E>0$ and $E'>0$ in $\Rpos$, where $E'$ represent the derivative of $E$ with respect of $u_1$.

According to Proposition \ref{Prop: discrete energy conservation}, $U_1,U_2,\dots, U_n,A$ satisfy the discrete version (\ref{ec: discrete energy conservation}) of the energy conservation law (\ref{ec: continuous conservation law}). Dividing both sides of (\ref{ec: discrete energy conservation}) by $G_1(U_n)$ we obtain the following identities:
\begin{equation}\label{ec: discrete energy conservation bis}
\dfrac{1}{2}A^2 \dfrac{g^2_2(U_n)}{G_1(U_n)}= \dfrac{T}{G_1(U_n)} + \frac{h^2}{8} \dfrac{g^2_1(U_n)}{G_1(U_n)} \Big(1-\dfrac{g^2_1(U_1)}{g^2_1(U_n)}\Big).
\end{equation}
Taking derivatives with respect to $U_1$ at both sides of (\ref{ec: discrete energy conservation bis}), we have
\begin{equation}\label{ec: energia derivada}
\begin{array}{l}
A A' \dfrac{g^2_2(U_n)}{G_1(U_n)} + \dfrac{A^2}{2} \Big(\dfrac{g^2_2(U_n)}{G_1(U_n)}\Big)'= \\[2ex]
= \Big(\dfrac{T}{G_1(U_n)}\Big)' + \dfrac{h^2}{8} \Big(\dfrac{g^2_1(U_n)}{G_1(U_n)}\Big)'\Big(1-\dfrac{g^2_1(U_1)}{g^2_1(U_n)}\Big) - \dfrac{h^2}{8} \dfrac{g^2_1(U_n)}{G_1(U_n)}\Big(\dfrac{g^2_1(U_1)}{g^2_1(U_n)}\Big)'.
\end{array}
\end{equation}
Let $u_1 \in A^{-1}\big((0,\mathcal{A}]\big)\cap \Rpos$. By Lema \ref{Lema: positividad}, $U_i(u_1)$ and $U_i'(u_1)$ are positive for $1 \le i \le n$. Furthermore, $g_1$, $g_2$, $g$, $G$ and $G_1$ are positive and increasing functions in $\Rpos$. Throughout the demonstration we will use these conditions repeatedly.

From Lemmas \ref{Lema: Crecimientos 2} and \ref{Lema: Crecimientos} we deduce that ${g^2_1(U_n)}/{G_1(U_n)}$ is an increasing function and ${g^2_1(U_1)}/{g^2_1(U_n)}$ is a decreasing function. Combining these remarks with (\ref{ec: energia derivada}) we obtain
$$
\Big(A A' \dfrac{g^2_2(U_n)}{G_1(U_n)}\Big)(u_1) >
\bigg(\Big(\dfrac{T}{G_1(U_n)}\Big)' - \dfrac{A^2}{2}
\Big(\dfrac{g^2_2(U_n)}{G_1(U_n)}\Big)'\bigg)(u_1).
$$
we see that the inequality above may be rewritten in the form
\begin{eqnarray}
\Big(A A' \dfrac{g^2_2(U_n)}{G_1(U_n)}\Big)(u_1) &>&
\bigg(\dfrac{I^2}{G_1^{2}(U_n)}\Big(\dfrac{T}{I}\Big)' +
\dfrac{G_1^2(U_1)}{G_1^{2}(U_n)}\Big(\dfrac{T}{G_1(U_1)}\Big)'\bigg)(u_1) \nonumber\\
&& -\bigg( \dfrac{A^2}{2}
\Big(\dfrac{g^2_2(U_n)}{G_1(U_n)}\Big)'\bigg)(u_1). \label{ec: cota1 Aprima G-crec}
\end{eqnarray}
We claim that $\big({T}/{G_1(U_1)}\big)'(u_1)> 0$. Indeed, by
Lemmas \ref{Lema: positividad} and \ref{Lema: Crecimientos} we have that
$$
\Big(\dfrac{T}{g_1^2(U_1)}\Big)'(u_1) = \bigg(\sum_{k=1}^{n-1}
\dfrac{g_1(U_{k+1})+g_1(U_k)}{2g_1(U_1)}
h^2\Big(\dfrac{1}{2}+\sum_{j=2}^{k}
\dfrac{g_1(U_j)}{g_1(U_1)}\Big)\bigg)'(u_1) > 0.
$$
Combining this result with Lemma \ref{Lema: Crecimientos 2}, we conclude that
$$
\Big(\dfrac{T}{G_1(U_1)}\Big)'(u_1) = \Big(\dfrac{g_1^2(U_1)}{G_1(U_1)}
\dfrac{T}{g_1^2(U_1)}\Big)'(u_1) > 0.
$$
Combining the claim above with (\ref{ec: cota1 Aprima G-crec}) we deduce that
\begin{equation}\label{ec: cota2 Aprima G-crec}
\Big(A A' \dfrac{g^2_2(U_n)}{G_1(U_n)} \Big)(u_1) >
\bigg( \dfrac{I^2}{G_1^{2}(U_n)}\Big(\dfrac{T}{I}\Big)' - \dfrac{A^2}{2}
\Big(\dfrac{g^2_2(U_n)}{G_1(U_n)}\Big)'\bigg)(u_1).
\end{equation}
In order to prove the positivity of $A'(u_1)$, we rewrite the right side of (\ref{ec: cota2 Aprima G-crec}).
$$
\begin{array}{l}
\Big( \dfrac{I^2}{G_1^{2}(U_n)}\Big(\dfrac{T}{I}\Big)' - \dfrac{A^2}{2} \Big(\dfrac{g^2_2(U_n)}{G_1(U_n)}\Big)'\Big)(u_1) = \\[2ex]
= \bigg(\dfrac{T' I-T I'}{G_1^2(U_n)} + \dfrac{A^2}{2} \dfrac{g^2_2(U_n) \big(G_1(U_n)\big)'- \big(g_2^2(U_n)\big)'G_1(U_n)}{G^2_1(U_n)} \bigg)(u_1)\\[2ex]
= \Bigg( \dfrac{E' I-E I'}{G_1^2(U_n)} +
\dfrac{A^2g^2_2(U_n)\big(G_1(U_n)\big)'}{2G^2_1(U_n)} \bigg(1 -
\dfrac{\big(g_2^2(U_n)\big)'G_1(U_n)}{g^2_2(U_n)\big(G_1(U_n)\big)'}
\bigg)\Bigg)(u_1).
\end{array}
$$
Since there exists $d \in [0,1)$ such that $\big(\ln (G_1^d(x)/g_2^2(x))\big)' \ge 0$ for all $x>0$, we have that
$$
\Bigg(1 - \dfrac{\big(g_2^2(U_n)\big)'G_1(U_n)}{g^2_2(U_n)\big(G_1(U_n)\big)'}
\Bigg)(u_1) > 1-d.
$$
Furthermore, by the positivity of $E'(u_1)$ and the definition of $I(u_1)$, we conclude that $\big(E' I-E I'\big)(u_1)> -E(u_1) \big(G(U_n)\big)'(u_1)$. From these inequalities, we deduce that
\begin{eqnarray*}
\Big( \dfrac{I^2}{G_1^{2}(U_n)}\Big(\dfrac{T}{I}\Big)' - \dfrac{A^2}{2} \Big(\dfrac{g^2_2(U_n)}{G_1(U_n)}\Big)'\Big)(u_1) > \qquad\qquad\qquad\qquad\\
 \qquad\qquad\qquad\qquad > \Big( \dfrac{\big(G_1(U_n)\big)'}{G_1^2(U_n)}\Big( (1 - d) \dfrac{A^2g^2_2(U_n)}{2} - E \Big)\Big)(u_1).
\end{eqnarray*}
Combining these remarks with (\ref{ec: cota2 Aprima G-crec}), we obtain
$$
\Big(A A' \dfrac{g^2_2(U_n)}{G_1(U_n)} \Big)(u_1) >
\Bigg( \dfrac{\big(G_1(U_n)\big)'}{G_1^2(U_n)}\Big( (1 - d) \dfrac{A^2g^2_2(U_n)}{2} - E \Big)\Bigg)(u_1).
$$
From (\ref{ec: discrete energy conservation}), it follows that
{\small \begin{eqnarray}
\Big(A A' \dfrac{g^2_2(U_n)}{G_1(U_n)}\Big)(u_1) \!\!\!\!\!\!&>&\!\!\!\!\!\!
\Bigg(\dfrac{\big(G_1(U_n)\big)'}{G_1^2(U_n)} \dfrac{(1-d)A^2g^2_2(U_n)}{4}\Bigg)(u_1)\!\!\!\!\!\!
\label{ec: positividad Aprima Reduccion1}\\
&& \!\!\!\!\!\!+ \Bigg(\dfrac{\big(G_1(U_n)\big)'}{G_1^2(U_n)}\Big(\frac{1 \!-\! d}{2} T
+ \frac{1 \!-\! d}{2} \frac{h^2}{8}(g_1^2(U_n) \!-\! g_1^2(U_1)) \!-\! E
\Big)\Bigg)(u_1).\!\!\!\!\!\!\nonumber
\end{eqnarray}}
If we prove that the second term of the right side of (\ref{ec: positividad Aprima Reduccion1}) is
positive, we obtain that
\begin{equation}\label{ec: Cota Aprima G-crec}
\Big(A A' \dfrac{g^2_2(U_n)}{G_1(U_n)}\Big)(u_1) >
\Bigg(\dfrac{\big(G_1(U_n)\big)'}{G_1^2(U_n)}\dfrac{(1-d)A^2g^2_2(U_n)}{4}\Bigg)(u_1)
> 0,
\end{equation}
which immediately implies the statement of the theorem. Therefore, it suffices to show that
$$
\Bigg(\dfrac{\big(G_1(U_n)\big)'}{G_1^2(U_n)}\Big(\frac{1 - d}{2} T
+ \frac{1 - d}{2} \frac{h^2}{8}(g_1^2(U_n)-g_1^2(U_1)) - E
\Big)\Bigg)(u_1)>0.
$$
Since $g_1$ is an increasing function, we only need to show that
$$
\frac{1-d}{2} T(u_1) - E(u_1) = \sum_{k=1}^{n-1} \Big(\frac{1 - d}{2} T_k(u_1) - E_k(u_1)\Big)
> 0,
$$
where
\begin{eqnarray*}
T_k &:=& \dfrac{g_1(U_{k+1})+g_1(U_k)}{2}(U_{k+1}-U_k), \\
E_k &:=& T_k - I_k,
\end{eqnarray*}
with $I_k:=G_1(U_{k+1})-G_1(U_{k})$. Note that, for $u_1>0$, $I_k(u_1)$ is the integral of $g_1$ in $[U_{k}(u_1),U_{k+1}(u_1)]$, $T_k(u_1)$ is the trapezoidal rule applied to $I_k(u_1)$ and $E_k(u_1)$ is the error of such approximation.

In order to prove that $(1 - d) T(u_1)/2 - E(u_1) >0$, we show that
\begin{equation}\label{ec: positividad Aprima Reduccion2}
\frac{1 - d}{2} T_k(u_1) - E_k(u_1) >0
\end{equation}
holds for $1\le k \le n-1$. By \cite{DrAg98}, we have that
$$
E_k(u_1) \le \Big(\dfrac{g'_1(U_{k+1})+g'_1(U_k)}{8}(U_{k+1}-U_k)^2\Big)(u_1).
$$
From Lemma \ref{Lema: positividad} and the monotonicity of $g'_1$, we deduce that
\begin{eqnarray*}
E_k(u_1) &\le&
\Big(\dfrac{g'_1(U_{n})}{4}(U_{k+1}-U_k)^2 \Big)(u_1)\\
&\le& \bigg(h^2\dfrac{g'_1(U_{n})}{4} \sum_{j=1}^{k}\dfrac{g_1(U_{j+1})+g_1(U_j)}{2} (U_{k+1}-U_k)\bigg)(u_1)\\
&\le& \bigg(h \dfrac{g'_1(U_{n})}{4} \dfrac{g_1(U_{k+1})+g_1(U_k)}{2}
(U_{k+1}-U_k)\bigg)(u_1)\\
&& = \Big(h \dfrac{g'_1(U_{n})}{4} T_k\Big)(u_1).
\end{eqnarray*}
Thus, we see that (\ref{ec: positividad Aprima Reduccion2}) is satisfied if the inequality
\begin{equation}\label{ec: positividad Aprima Reduccion3}
h \dfrac{g'_1(U_{n})(u_1)}{4} \le \frac{1 - d}{2}
\end{equation}
holds. From Lemma \ref{Lema: cota sup un alpha bis} and the monotonicity of $g'_1$, we deduce that there exists a constant $M(\mathcal{A})> 0$ independent of $h$ such that $g'_1(U_{n})(u_1) \le M(\mathcal{A})$ for $u_1 \in A^{-1}((0,\mathcal{A}]) \cap \Rpos$. This shows that a sufficient condition for the fulfillment of (\ref{ec: positividad Aprima Reduccion3}), and thus of $A'(U_1) > 0$, is that $n-1 \ge
M(\mathcal{A})/(2-2d)$ holds. This finishes the proof of the theorem.
\end{proof}

In order to prove the uniqueness of positive solutions of (\ref{ec: sistema_A}), we still need a result
on the structure of the inverse image of $A$ on the interval under consideration.
\begin{lemma}\label{Lema: Preimagen=intervalo}
Let $\mathcal{A} > 0$ be a given constant and let $A(U_1)$ be the rational function of (\ref{ec: relaciones recursivas}). Let $g$ and $G$ be the functions defined in (\ref{ec: definicion g}) and (\ref{ec: definicion G}) respectively. Suppose that
\begin{itemize}
\item $G$ and $g$  are surjective functions,
\item exists $d \in [0,1)$ such that $\big(\ln (G_1^d(x)/g_2^2(x))\big)' \ge 0$ for all $x>0$,
\item $G''(x) \ge 0$ for all $x>0$,
\end{itemize}
hold, where $G_1$ is the primitive function of $g_1$ such that $G_1(0)=0$. Then there exists $M(\mathcal{A}) > 0$ that satisfies the following condition: for $n > 1 + M(\mathcal{A})/(2-2d)$ there exists $c:=c(n,\mathcal{A})>0$ such that
$A^{-1}((0,\mathcal{A}])\cap\R_{>0}=(0,c]$.
\end{lemma}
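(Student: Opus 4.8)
The plan is to show that $A^{-1}((0,\mathcal{A}])\cap\R_{>0}$ is a half-open interval of the form $(0,c]$ by exploiting the monotonicity established in Theorem~\ref{Teo: derivada alpha positiva}. First I would recall from Lemma~\ref{Lema: positividad} and the recursive definitions (\ref{ec: relaciones recursivas}) that $A$ is a continuous (indeed $\mathcal{C}^1$) function on $\R_{>0}$, since $g_1,g_2\in\mathcal{C}^3(\R)$ and $g_2$ stays positive on $\Rpos$. Next I would observe the limiting behavior: as $u_1\to 0^+$, all $U_k(u_1)\to 0$ by Lemma~\ref{Lema: positividad}(\ref{Lema: positividad - item 2}), so $(U_n-U_{n-1})(u_1)/h\to 0$ and $g_1(U_n(u_1))\to 0$, while $g_2(U_n(u_1))\to 0$; analysing the quotient in the definition of $A(u_1)$ — using $(U_n-U_{n-1})(u_1)=h^2\big(\tfrac12 g_1(u_1)+\sum_{j=2}^{n-1}g_1(U_j(u_1))\big)$ from Lemma~\ref{Lema: positividad}(\ref{Lema: positividad - item 1}) and the analyticity of $g_1,g_2$ at $0$ with $g_1(0)=g_2(0)=0$ — one gets $A(u_1)\to 0^+$ as $u_1\to 0^+$. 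Conversely, Proposition~\ref{Prop: existencia} guarantees that every value $\alpha\in(0,\mathcal{A}]$ is attained, so $A^{-1}((0,\mathcal{A}])\cap\R_{>0}$ is nonempty.

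The core of the argument is then: take $M(\mathcal{A})$ to be the constant produced by Theorem~\ref{Teo: derivada alpha positiva}, and fix $n > 1 + M(\mathcal{A})/(2-2d)$. By that theorem, $A'(u_1)>0$ for every $u_1\in A^{-1}((0,\mathcal{A}])\cap\R_{>0}$. I would argue that $A^{-1}((0,\mathcal{A}])\cap\R_{>0}$ is an interval: suppose $u_1 < u_1'$ both lie in this set, and let $v\in(u_1,u_1')$. Since $A$ is continuous and positive on $\Rpos$, if $A(v)>\mathcal{A}$ then by the intermediate value theorem there is a point $w\in(u_1,v)$ with $A(w)=\mathcal{A}$, hence $w\in A^{-1}((0,\mathcal{A}])$, so $A'(w)>0$; but then on a neighbourhood to the right of $w$ the value of $A$ exceeds $\mathcal{A}$ while at $v$... — here I need a cleaner route. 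The clean route: define $c:=\sup\{u_1>0 : A(u_1)\le\mathcal{A}\}$ (finite, since Lemma~\ref{Lema: cota sup un alpha bis} and the relation $g(u_1)<\alpha\widehat C(\alpha)$ force $u_1$ to be bounded whenever $A(u_1)=\alpha\le\mathcal{A}$, using surjectivity and strict monotonicity of $g$). By continuity $A(c)\le\mathcal{A}$, so $c\in A^{-1}((0,\mathcal{A}])$ and thus $A'(c)>0$; this shows $A(u_1)>\mathcal{A}$ for $u_1$ slightly larger than $c$, consistent with the definition of the supremum. Now for any $u_1\in(0,c]$ I must show $A(u_1)\le\mathcal{A}$: if not, pick the largest $u_1^\ast\le u_1$ — actually the smallest $u_1^\ast > u_1$ — hmm, let me instead use: the set $S:=\{u_1\in(0,c] : A(u_1)>\mathcal{A}\}$ is open in $(0,c]$; if it were nonempty, pick $u_1^\ast\in S$, and since $A(u_1)\to 0$ as $u_1\to 0^+$ and $A(c)\le\mathcal{A}$, the connected component of $u_1^\ast$ in $S$ is an open subinterval $(a,b)$ with $A(a)=A(b)=\mathcal{A}$ (endpoints interior to $(0,c]$); but then $a\in A^{-1}((0,\mathcal{A}])$ forces $A'(a)>0$, contradicting that $A>\mathcal{A}$ immediately to the right of $a$. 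Hence $S=\emptyset$, i.e. $A((0,c])\subseteq(0,\mathcal{A}]$, giving $A^{-1}((0,\mathcal{A}])\cap\R_{>0}=(0,c]$.

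The main obstacle I anticipate is making the limit $A(u_1)\to 0$ as $u_1\to 0^+$ fully rigorous: one must show the quotient $\big(\tfrac1h(U_n-U_{n-1})+\tfrac h2 g_1(U_n)\big)/g_2(U_n)$ tends to $0$, and since both numerator and denominator vanish this requires comparing the vanishing orders of $g_1$ and $g_2$ near $0$ via their analyticity (leading terms $g_i(x)\sim c_i x^{m_i}$ with $m_i\ge 1$, $c_i>0$), together with Lemma~\ref{Lema: positividad} to control each $U_j(u_1)=u_1+O(u_1^{\,m_1})$. A secondary subtlety is the finiteness of $c$, which hinges on Lemma~\ref{Lema: cota sup un alpha bis}: one needs that the bound $u_1 < g^{-1}(\alpha\widehat C(\alpha))$ there is uniform for $\alpha\in(0,\mathcal{A}]$, which follows because $\widehat C$ and the various auxiliary functions are continuous in $\alpha$ on the compact interval $(0,\mathcal{A}]$ extended to $[0,\mathcal{A}]$ by the same limiting analysis. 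Everything else is a routine connectedness/IVT argument once strict positivity of $A'$ on the relevant preimage is in hand.
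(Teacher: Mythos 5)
Your overall strategy is the paper's: invoke Theorem \ref{Teo: derivada alpha positiva} to get $A'>0$ on $A^{-1}((0,\mathcal{A}])\cap\R_{>0}$, check $A(u_1)\to 0$ as $u_1\to 0^+$, take $c$ to be a suitable supremum, and exclude any excursion of $A$ above $\mathcal{A}$ inside $(0,c]$ by a first-crossing argument. However, your final contradiction is extracted at the wrong endpoint and, as written, is not a contradiction at all: if $(a,b)$ is a connected component of $\{u_1\in(0,c]:A(u_1)>\mathcal{A}\}$, then $A(a)=\mathcal{A}$ together with $A'(a)>0$ is perfectly consistent with $A>\mathcal{A}$ immediately to the right of $a$ --- that is exactly what $A'(a)>0$ predicts, so nothing fails there. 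The contradiction lives at the \emph{right} endpoint: since $A>\mathcal{A}$ on $(a,b)$ and $A(b)=\mathcal{A}$, one has $A'(b)=\lim_{x\to b^-}\big(A(b)-A(x)\big)/(b-x)\le 0$, while $A(b)=\mathcal{A}$ puts $b$ in $A^{-1}((0,\mathcal{A}])\cap\R_{>0}$, where the theorem forces $A'(b)>0$. This is precisely the paper's argument, in which $c_1:=\inf\{\delta>c_0:\ A(\delta)\le\mathcal{A}\}$ plays the role of $b$. The repair is local and easy, but the step as you stated it fails.

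Two secondary points where you make the argument heavier than necessary and leave claims unverified: both the limit $A(u_1)\to 0$ as $u_1\to 0^+$ and the finiteness of your supremum $c$ follow at once from the sandwich $\frac{h}{2}\,g\big(U_n(u_1)\big)\le A(u_1)\le g\big(U_n(u_1)\big)$, which the paper gets from Lemma \ref{Lema: positividad} by writing $A(u_1)$ as an $h$-weighted average (total weight $1$) of the values $g_1\big(U_k(u_1)\big)$ divided by $g_2\big(U_n(u_1)\big)$; combined with $U_n(u_1)\ge u_1$, the monotonicity of $g$ and the surjectivity of $g$ onto $\Rpos$ (so $g(x)\to 0$ as $x\to 0^+$ and $g(x)\to\infty$ as $x\to\infty$), this yields both facts with no extra input. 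That replaces your appeal to the vanishing orders of $g_1,g_2$ at $0$ and, more importantly, your use of Lemma \ref{Lema: cota sup un alpha bis} for the finiteness of $c$, which would require a bound on $\alpha\widehat{C}(\alpha)$ uniform over $\alpha\in(0,\mathcal{A}]$ that you assert via a ``continuity extension'' but do not actually establish.
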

\begin{proof}
By Theorem \ref{Teo: derivada alpha positiva} we have that there exists $M(\mathcal{A}) > 0$ such that the condition $A'(u_1) >0$ is satisfied for $n > 1 + M(\mathcal{A})/(2-2d)$ and $u_1 \in A^{-1}\big((0,\mathcal{A}]\big)\cap \R_{>0}$. Fix $n \ge 1 + M(\mathcal{A})/(2-2d)$. From Lemma \ref{Lema: positividad} we deduce that $U_n(u_1)$ defines a bijective function in $\Rpos$ and that
$$
A(u_1) = \Big(\frac{h}{2}g_1(U_1(u_1))+\sum_{k=2}^{n-1}{h}g_1(U_k(u_1))+\frac{h}{2}g_1(U_n(u_1))\Big)/g_2(U_n(u_1)).
$$
Since $0< U_1(u_1) < \cdots <U_n(u_1)$, we have the following inequalities:
$$
\frac{h}{2} g\big(U_n(u_1)\big) \le A(u_1) \le g\big(U_n(u_1)\big).
$$
Since $\lim_{u_1 \rightarrow 0^+} g\big(U_n(u_1))=0$, there exists
$\epsilon > 0$ such that $(0,\epsilon] \subset A^{-1}((0,\mathcal{A}])
\cap \R_{>0}$. We claim that
$$
(0,c_0]=A^{-1}((0,\mathcal{A}])\cap\R_{>0}
$$
with
$$
c_0:=\sup\{\epsilon: (0, \epsilon] \subset A^{-1}((0,\mathcal{A}])
\cap \R_{>0}\}.
$$
Indeed, from the definition of $c_0$ we obtain that $(0,c_0) \subset
A^{-1}((0,\mathcal{A}]) \cap \R_{>0}$ and that $\lim_{u_1 \rightarrow
c_0^-} A(u_1) = A(c_0) \le \mathcal{A}$. Therefore, we deduce that
$$
(0,c_0] \subset A^{-1}((0,\mathcal{A}]) \cap \R_{>0}.
$$
We now show that the last set inclusion is an equality. Suppose that there exists $\delta > c_0$ such that $A(\delta) \le \mathcal{A}$. Let $c_1:=\inf\{\delta: \delta > c_0 , A(\delta) \le \mathcal{A} \}$. From the definition of $c_0$,
the interval $(c_0,c_1)$ is not empty. Since $A(x) > \mathcal{A}$ for all $x \in (c_0,c_1)$, we have that $A'(c_1) \le 0$, which contradicts the fact that $A'(u_1) > 0$ for all $u_1 \in A^{-1}((0,\mathcal{A}])\cap\R_{>0}$.
\end{proof}

Now we state and prove the main result of this section:
\begin{theorem}\label{Teo: unicidad}
Let $\alpha > 0$ be a given constant. Let $g$ and $G$ be the functions defined in (\ref{ec: definicion g}) and (\ref{ec: definicion G}) respectively. Suppose that
\begin{itemize}
\item $G$ and $g$  are surjective functions,
\item exists $d \in [0,1)$ such that $\big(\ln (G_1^d(x)/g_2^2(x))\big)' \ge 0$ for all $x>0$,
\item $G''(x) \ge 0$ for all $x>0$,
\end{itemize}
hold, where $G_1$ is the primitive function of $g_1$ such that $G_1(0)=0$. Then there exists $M(\alpha) > 0$ such that {\rm (\ref{ec: sistema_alpha})} has a unique positive solution for $n > 1 + M(\alpha)/(2-2d)$.
\end{theorem}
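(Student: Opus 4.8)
The plan is to deduce Theorem \ref{Teo: unicidad} as a direct combination of Proposition \ref{Prop: existencia}, Theorem \ref{Teo: derivada alpha positiva}, and Lemma \ref{Lema: Preimagen=intervalo}. The key point is that the positive solutions of (\ref{ec: sistema_alpha}) for a fixed $\alpha$ are in bijection with the positive roots $u_1$ of the equation $A(U_1)=\alpha$, where $A(U_1)$ is the rational function defined in (\ref{ec: relaciones recursivas}): indeed, by the recursive parametrization (\ref{ec: relaciones recursivas}), once $u_1>0$ is chosen the values $u_2\klk u_n$ are uniquely determined, and the last equation of (\ref{ec: sistema_alpha}) is exactly $A(u_1)=\alpha$. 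So it suffices to show that $A(U_1)=\alpha$ has exactly one positive root when $n$ is large enough.

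First I would apply Proposition \ref{Prop: existencia} (using that $g$ is surjective) to guarantee that there exists at least one positive solution, hence at least one $u_1^{*}>0$ with $A(u_1^{*})=\alpha$. Next I would set $\mathcal{A}:=\alpha$ and invoke Lemma \ref{Lema: Preimagen=intervalo}, which provides a threshold $M(\alpha)>0$ such that, for every $n>1+M(\alpha)/(2-2d)$, the set $A^{-1}\big((0,\alpha]\big)\cap\R_{>0}$ is an interval of the form $(0,c]$ with $c=c(n,\alpha)>0$. By Theorem \ref{Teo: derivada alpha positiva}, on this interval $A$ is strictly increasing (since $A'(u_1)>0$ there). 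A strictly increasing continuous function on $(0,c]$ takes the value $\alpha=A(c)$ at most once; since we already know it takes it at least once, namely at $u_1^{*}$, we conclude $u_1^{*}=c$ is the unique positive root of $A(U_1)=\alpha$. Note that any positive root necessarily lies in $A^{-1}\big((0,\alpha]\big)\cap\R_{>0}=(0,c]$, so no root is missed.

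Translating back through the parametrization (\ref{ec: relaciones recursivas}), the unique value $u_1=c$ yields a unique tuple $(u_1\klk u_n)\in(\Rpos)^{n}$ solving (\ref{ec: sistema_alpha}), which establishes the theorem with the same $M(\alpha)$ furnished by Lemma \ref{Lema: Preimagen=intervalo}. I do not expect any genuine obstacle here: all the hard analytic work (the monotonicity of $A$, the convexity/trapezoidal error estimates, and the interval structure of the preimage) has already been carried out in Theorem \ref{Teo: derivada alpha positiva} and Lemma \ref{Lema: Preimagen=intervalo}. The only point requiring a small amount of care is the bookkeeping that links positive solutions of (\ref{ec: sistema_alpha}) to positive roots of $A(U_1)=\alpha$ and the observation that every such root must lie in the interval $(0,c]$, so that uniqueness on that interval is in fact global uniqueness among positive reals.
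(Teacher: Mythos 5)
Your proposal is correct and takes essentially the same route as the paper: existence from Proposition \ref{Prop: existencia}, and uniqueness from the identification of positive solutions with positive roots of $A(U_1)=\alpha$ together with $A'>0$ on $A^{-1}\big((0,\alpha]\big)\cap\R_{>0}=(0,c]$ supplied by Theorem \ref{Teo: derivada alpha positiva} and Lemma \ref{Lema: Preimagen=intervalo}. The paper merely phrases the final step as a contradiction between two distinct roots rather than as injectivity of a strictly increasing function, which is the same argument.
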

\begin{proof}
Proposition \ref{Prop: existencia} shows that (\ref{ec: sistema_alpha}) has solutions in $(\R_{>0})^n$ for any $\alpha
>0$ and any $n\in\N$. Therefore, there remains to show the uniqueness assertion.

By Theorem \ref{Teo: derivada alpha positiva} we have that there exists $M(\mathcal{A}) > 0$ such that the condition $A'(u_1) >0$ is satisfied for $n > 1 + M(\mathcal{A})/(2-2d)$ and $u_1 \in A^{-1}\big((0,\mathcal{A}]\big)\cap \R_{>0}$. From Lemma \ref{Lema: Preimagen=intervalo}, there exists $c=c(n,\alpha)$ such that $A^{-1}((0,\alpha])\cap \R_{>0}=(0,c]$. Arguing by contradiction, assume that there exist two distinct positive solutions $(u_1\klk u_n)$, $(\widehat{u}_1\klk\widehat{u}_n)\in(\Rpos)^{n}$ of (\ref{ec: sistema_alpha}). This implies that $u_1\not=\widehat{u}_1$ and $A(u_1)=A(\widehat{u}_1)$, where $A(U_1)$ is defined in (\ref{ec: relaciones recursivas}).
But this contradicts the fact that $A'(u_1)>0$ holds in $(0,c]$, showing thus the theorem.
\end{proof}
%
%
\section{Numerical conditioning}\label{seccion: condicionamiento}
Let be given $n\in\N$ and $\alpha^*>0$. Let $g$ and $G$ be the functions defined in (\ref{ec: definicion g}) and (\ref{ec: definicion G}) respectively. Suppose that
\begin{itemize}
\item $G$ and $g$  are surjective functions,
\item exists $d \in [0,1)$ such that $\big(\ln (G_1^d(x)/g_2^2(x))\big)' \ge 0$ for all $x>0$,
\item $G''(x) \ge 0$ for all $x>0$,
\end{itemize}
hold, where $G_1$ is the primitive function of $g_1$ such that $G_1(0)=0$. In order to compute the positive solution of (\ref{ec: sistema_A}) for this value of $n$ and $A=\alpha^*$, we shall consider (\ref{ec: sistema_A}) as a family of systems parametrized by the values $\alpha$ of $A$, following the positive real path determined by (\ref{ec: sistema_A}) when $A$ runs through a suitable interval whose endpoints are $\alpha_*$ and $\alpha^*$, where $\alpha_*$ be a positive constant independent of $h$ to be fixed in Section \ref{seccion: algoritmo}.

A critical measure for the complexity of this procedure is the condition number of the path considered, which is essentially determined by the inverse of the Jacobian matrix of (\ref{ec: sistema_A}) with respect to the variables $U_1 \klk U_n$, and the gradient vector of (\ref{ec: sistema_A}) with respect to the variable $A$ on the path. In this section we prove the invertibility of such Jacobian matrix, and obtain an explicit form of its inverse. Then we obtain an upper bound on the condition number of the path under consideration.

Let $F:=F(A,U):\R^{n+1}\to\R^n$ be the nonlinear map defined by the right-hand side of (\ref{ec: sistema_A}). In this section we
analyze the invertibility of the Jacobian matrix of $F$ with respect to the variables $U$, namely,
$$
J(A,U):=\frac{\partial F}{\partial{U}}(A,U) :=\left(
\begin{matrix}
  \Gamma_1 & -1       \\
   -1   & \ddots & \ddots\\
   &    \ddots & \ddots & -1\\
   & & -1 &   \Gamma_n
\end{matrix}\right),
$$
with $\Gamma_1:=1+\frac{1}{2} h^2 g_1'(U_1)$, $\Gamma_i:=2 + h^2g_1'(U_i)$ for $2 \le i \le n-1$ and $\Gamma_n:= 1 + \frac{1}{2}h^2g_1'(U_n)-hAg_2'(U_n)$.

We start relating the nonsingularity of the Jacobian matrix $J(\alpha,u)$ with that of the corresponding point in the
path determined by (\ref{ec: sistema_A}). Let $(\alpha,u) \in (\Rpos)^{n+1}$ be a solution of (\ref{ec: sistema_A}) for $A=\alpha$. Taking derivatives with respect to $U_1$ in (\ref{ec: relaciones recursivas}) and substituting $u_1$ for $U_1$ we obtain the following tridiagonal system:
$$\left(\begin{array}{ccccc}
\Gamma_1(u_1) & -1 &\\
-1 & \ddots& \ddots \\
&\ddots &\ddots & -1\\
&  & -1 & \Gamma_n(u_1)
\end{array}\right)
\left(\begin{array}{c}
1\\
U_2'(u_1)\\
\vdots\\
U_n'(u_1)
\end{array}\right)=
\left(\begin{array}{c}
0\\
\vdots\\
0\\
h g_2\big(U_n(u_1)\big) A'(u_1)
\end{array}\right).
$$
For $1\le k\le n-1$, we denote by $\Delta_k:=\Delta_k(A,U)$ the $k$th principal minor of the matrix $J(A,U)$, that is, the $(k \times k)$-matrix formed by the first $k$ rows and the first $k$ columns of $J(A,U)$. By the Cramer rule we deduce the identities:
\begin{eqnarray}
h g_2\big(U_n(u_1)\big) A'(u_1) &=& \det\big(J(\alpha,u)\big), \label{ec: A'=det 1}\\
\det\big(J(\alpha,u)\big) U_k'(u_1) &=& h g_2\big(U_n(u_1)\big) A'(u_1) \det\big(\Delta_{k-1}(\alpha,u)\big), \label{ec: A'=det 2}
\end{eqnarray}
for $2\le k\le n$.

Let $\alpha > 0$ be a given constant. Then Theorem \ref{Teo: derivada alpha positiva} asserts that $A'(u_1)>0$ holds. Combining this inequality with (\ref{ec: A'=det 1}) we conclude that $\det\big(J(\alpha,u)\big)>0$ holds. Furthermore, by (\ref{ec: A'=det 2}), we have
\begin{equation} \label{ec: U'=det}
U_k'(u_1) = \det \big(\Delta_{k-1}(\alpha , u)\big) \quad (2 \le k \le n).
\end{equation}
Combining Remark \ref{Lema: positividad}(\ref{Lema: positividad - item 4}) and (\ref{ec: U'=det}) it follows that
$\det\big(\Delta_k(\alpha, u)\big)>0$ holds for $1 \le k \le n-1$. As a consequence, we have that all the principal minors of the symmetric matrix $J(\alpha,u)$ are positive. Then the Sylvester criterion shows that $J(\alpha,u)$ is positive definite. These remarks allows us to prove the following result.
\begin{theorem}\label{Teo: J inversible}
Let $(\alpha,v)\in(\R_{>0})^{n+1}$ be a solution of (\ref{ec: sistema_A}) for $A=\alpha$. Let $g$ and $G$ be the functions defined in (\ref{ec: definicion g}) and (\ref{ec: definicion G}) respectively. Suppose that
\begin{itemize}
\item $G$ and $g$  are surjective functions,
\item exists $d \in [0,1)$ such that $\big(\ln (G_1^d(x)/g_2^2(x))\big)' \ge 0$ for all $x>0$,
\item $G''(x) \ge 0$ for all $x>0$,
\end{itemize}
hold, where $G_1$ is the primitive function of $g_1$ such that $G_1(0)=0$. Then there exists $M(\alpha) > 0$ such that the matrix $J(\alpha, u)$ is symmetric and positive definite for $n > 1 + M(\alpha)/(2-2d)$.
\end{theorem}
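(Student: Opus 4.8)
The plan is to combine the determinantal identities established just above the statement with Sylvester's criterion. Symmetry of $J(\alpha,u)$ is immediate from its explicit tridiagonal form (the off-diagonal entries are all $-1$), so the entire content is to show that every leading principal minor of $J(\alpha,u)$ is strictly positive, and for this I will use the constant $M(\alpha)$ furnished by Theorem \ref{Teo: derivada alpha positiva}.

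First I would invoke Theorem \ref{Teo: derivada alpha positiva}: there exists $M(\alpha)>0$ such that for $n > 1 + M(\alpha)/(2-2d)$ one has $A'(u_1)>0$, where $A(U_1)$ is the rational function of (\ref{ec: relaciones recursivas}) and $u_1$ is the first coordinate of the given solution. Since $h>0$ and $g_2\big(U_n(u_1)\big)=g_2(u_n)>0$ (because $g_2$ is positive on $\Rpos$ and $u_n>0$), identity (\ref{ec: A'=det 1}) gives $\det\big(J(\alpha,u)\big)=h\,g_2(u_n)\,A'(u_1)>0$, which settles the full $n\times n$ minor. For the remaining leading minors I would then use (\ref{ec: U'=det}), namely $U_k'(u_1)=\det\big(\Delta_{k-1}(\alpha,u)\big)$ for $2\le k\le n$, together with Lemma \ref{Lema: positividad}(\ref{Lema: positividad - item 4}), which yields $U_k'(u_1)>1>0$; hence $\det\big(\Delta_k(\alpha,u)\big)>0$ for every $1\le k\le n-1$. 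Combining the two steps, all $n$ leading principal minors of the symmetric matrix $J(\alpha,u)$ are positive, so Sylvester's criterion shows that $J(\alpha,u)$ is positive definite, completing the proof.

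As for the main obstacle: essentially none remains at this point, since the analytic heart of the matter—the positivity of $A'(u_1)$ on the relevant interval—is precisely Theorem \ref{Teo: derivada alpha positiva}, whose proof (the trapezoidal-rule error bound and the sufficient condition $h\,g_1'(U_n)(u_1)/4\le(1-d)/2$) is the genuinely delicate argument. The only points requiring attention in the present theorem are bookkeeping ones: that the $U_k'(u_1)$ appearing in (\ref{ec: U'=det}) are exactly the cofactors produced by applying Cramer's rule to the tridiagonal system obtained by differentiating (\ref{ec: relaciones recursivas}) with respect to $U_1$, that the quotient identity (\ref{ec: U'=det}) is legitimate because $\det\big(J(\alpha,u)\big)\neq 0$ was just established, and that the three hypotheses imposed here are exactly those under which Theorem \ref{Teo: derivada alpha positiva} applies, so the threshold $n>1+M(\alpha)/(2-2d)$ carries over verbatim.
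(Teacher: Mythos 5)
Your proposal is correct and follows essentially the same route as the paper: the paper also derives $\det\big(J(\alpha,u)\big)>0$ from identity (\ref{ec: A'=det 1}) together with $A'(u_1)>0$ (Theorem \ref{Teo: derivada alpha positiva}), obtains the positivity of the leading principal minors from (\ref{ec: U'=det}) and Lemma \ref{Lema: positividad}(\ref{Lema: positividad - item 4}), and concludes by Sylvester's criterion. No gaps.
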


Having shown the invertibility of the matrix $J(\alpha,u)$ for every solution $(\alpha,u) \in (\Rpos)^{n+1}$ of (\ref{ec: sistema_A}), the next step is to obtain explicitly the corresponding inverse matrices $J^{-1}(\alpha,u)$. For this purpose, we establish a result on the structure of the matrix $J^{-1}(\alpha,u)$.
\begin{proposition}\label{Prop: factoriz J^{-1}}
Let $(\alpha,u) \in(\R_{>0})^{n+1}$ be a solution of (\ref{ec: sistema_A}). Let $g$ and $G$ be the functions defined in (\ref{ec: definicion g}) and (\ref{ec: definicion G}) respectively. Suppose that
\begin{itemize}
\item $G$ and $g$  are surjective functions,
\item exists $d \in [0,1)$ such that $\big(\ln (G_1^d(x)/g_2^2(x))\big)' \ge 0$ for all $x>0$,
\item $G''(x) \ge 0$ for all $x>0$,
\end{itemize}
hold, where $G_1$ is the primitive function of $g_1$ such that $G_1(0)=0$. Then there exists $M(\alpha) > 0$ such that the following matrix factorization holds:
$$
J^{-1}(\alpha,u) = \!\!
 \left(\begin{array}{ccccccc}
 1& \frac{1}{u_2'} & \frac{1}{u_3'} &\dots & \frac{1}{u_n'} \\[1ex]
  & 1 & \frac{u_2'}{u_3'}  & \dots &  \frac{u_2'}{u_n'} \\[1ex]
  &  & \ddots & \ddots & \vdots \\[1ex]
  &  & & 1 & \frac{u_{n-1}'}{u_n'} \\[1ex]
  &   &  &  & 1
 \end{array}
 \right)\!\!
 \left(\begin{array}{ccccccc}
  \frac{1}{u_2'} & \\[1ex]
  \frac{1}{u_3'} & \frac{u_2'}{u_3'} &  \\[1ex]
  \vdots &  \vdots & \ddots &  \\[1ex]
  \frac{1}{u_n'} & \frac{u_2'}{u_n'} & \dots & \frac{u_{n-1}'}{u_n'} &  \\[1ex]
  \frac{1}{d(J)} & \frac{u_2'}{d(J)}  & \dots & \frac{u_{n-1}'}{d(J)} & \frac{u_n'}{d(J)}
 \end{array}
 \right),
 $$
for $n > 1 + M(\alpha)/(2-2d)$, where $d(J) := \det\big(J(\alpha,u)\big)$ and $u_k':=U_k'(u_1)$ for $2 \le k \le n$.
\end{proposition}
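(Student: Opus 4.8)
The plan is to verify the claimed factorization by direct matrix multiplication, using the explicit description of $J(\alpha,u)$ as a tridiagonal matrix together with the relations \eqref{ec: U'=det} expressing the leading principal minors $\det(\Delta_{k-1}(\alpha,u))$ as the derivatives $U_k'(u_1)$. First I would invoke Theorem \ref{Teo: J inversible} to fix $M(\alpha)>0$ so that for $n > 1 + M(\alpha)/(2-2d)$ the matrix $J(\alpha,u)$ is symmetric and positive definite, hence invertible, so that $J^{-1}(\alpha,u)$ exists and the right-hand side of the asserted identity makes sense (all the $u_k'$ are positive and $d(J)>0$). Denote by $L$ the first (upper-triangular) factor and by $R$ the second factor in the statement; I would prove the equivalent identity $J(\alpha,u)\cdot L\cdot R = \mathrm{Id}_n$, or, what is cleaner, that $L\cdot R$ is a right inverse of $J(\alpha,u)$ by checking $J(\alpha,u)\,(LR) = \mathrm{Id}_n$ column by column.

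The key computational step is to identify $LR$ entrywise. Writing $u_1':=1$, one sees that $L$ has $(i,j)$-entry $u_i'/u_j'$ for $i\le j$ and $0$ otherwise, while $R$ has $(i,j)$-entry $u_j'/u_i'$ for $i\le n-1$, $j\le i$ (and $0$ for $j>i$), and $n$th row $u_j'/d(J)$ for all $j$. Multiplying, the $(i,j)$-entry of $LR$ for $i\le j$ is $\sum_{k} L_{ik}R_{kj}$; since $R_{kj}=0$ unless $k\ge j$ (for $k\le n-1$) and $L_{ik}=0$ unless $k\ge i$, the sum runs over $k$ from $j$ to $n$, giving $\sum_{k=j}^{n-1}\frac{u_i'}{u_k'}\frac{u_k'}{u_j'} + \frac{u_i'}{u_n'}\frac{u_j'}{d(J)} = \frac{u_i'}{u_j'}(n-j) + \frac{u_i'u_j'}{u_n'd(J)}$, and a symmetric computation handles $i>j$. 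The resulting symmetric matrix should then be checked to be annihilated correctly by $J(\alpha,u)$: I would use the recursion \eqref{ec: relaciones recursivas} differentiated in $u_1$, i.e. $U_{k+1}'-2U_k'+U_{k-1}' = h^2 g_1'(U_k)U_k'$ for $2\le k\le n-1$ together with $U_2'-U_1' = \frac{h^2}{2}g_1'(U_1)U_1'$ and the boundary relation $hg_2(U_n)A'(u_1)=\det(J(\alpha,u))$ from \eqref{ec: A'=det 1}, which translates the last row of $J$ acting on the columns of $LR$ into a telescoping identity.

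The main obstacle I anticipate is bookkeeping in the boundary (first and last) rows and columns, where the tridiagonal pattern degenerates: the $\Gamma_1$ entry carries the factor $\tfrac12$ and $\Gamma_n$ contains the term $-hAg_2'(U_n)$, so the annihilation identities there are not the ``generic'' three-term recursion but the special relations built into \eqref{ec: relaciones recursivas} and into the definition of $A(u_1)$. In particular, the $d(J)$-denominators in the last row of $R$ are precisely what is needed to make $J\cdot(LR)$ have the correct $n$th column; verifying this requires substituting \eqref{ec: A'=det 1} and \eqref{ec: U'=det} to re-express $\det(\Delta_{k-1}(\alpha,u))$ and $\det(J(\alpha,u))$ in terms of the $U_k'$. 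Once these boundary checks are done, the interior entries follow from the clean telescoping of the second difference against the partial sums appearing in $LR$, and the identity $J(\alpha,u)\,(LR)=\mathrm{Id}_n$ closes the proof; by uniqueness of the inverse this gives the stated factorization of $J^{-1}(\alpha,u)$.
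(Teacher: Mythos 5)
Your overall strategy (verifying directly that the displayed product is an inverse of $J(\alpha,u)$, using the differentiated recursion from (\ref{ec: relaciones recursivas}) together with (\ref{ec: A'=det 1}) and (\ref{ec: U'=det})) is viable, but the one concrete computation you actually carry out is wrong, and as written the verification would not close. You misread the second (lower-triangular) factor: for $k\le n-1$ and $j\le k$ its $(k,j)$ entry is $u_j'/u_{k+1}'$ --- note the index shift, row $k$ carries the denominator $u_{k+1}'$, as one sees from the first row being $1/u_2'$ --- not $u_j'/u_k'$ (your description of $R$) nor $u_k'/u_j'$ (what you actually use inside the sum). Consequently $L_{ik}R_{kj}=\frac{u_i'u_j'}{u_k'u_{k+1}'}$ and the sum over $k$ does not collapse; the correct closed form is
$$(LR)_{ij}=\sum_{k=\max\{i,j\}}^{n-1}\frac{u_i'u_j'}{u_k'u_{k+1}'}+\frac{u_i'u_j'}{u_n'\,d(J)},$$
which is precisely the expression used later in Section \ref{seccion: algoritmo}. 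Your formula $(LR)_{ij}=\frac{u_i'}{u_j'}(n-j)+\frac{u_i'u_j'}{u_n'\,d(J)}$ already fails at $(i,j)=(1,1)$, since by Lemma \ref{Lema: positividad} one has $u_k'>1$ for $k\ge 2$, so $\sum_{k=1}^{n-1}\frac{1}{u_k'u_{k+1}'}<n-1$; with that formula $J\cdot(LR)=\mathrm{Id}_n$ is false and the telescoping you invoke cannot be carried out.

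Once the entries are corrected, your plan does go through, and the boundary bookkeeping is exactly as you anticipate: for the interior rows one uses $\Gamma_m u_m'=u_{m-1}'+u_{m+1}'$ (the differentiated recursion), for the first row $\Gamma_1=u_2'$, and for the last row the identity $d(J)=\Gamma_n u_n'-u_{n-1}'$, which follows from cofactor expansion of the tridiagonal determinant together with $\det\Delta_{k-1}(\alpha,u)=u_k'$, i.e.\ (\ref{ec: U'=det}); these give $J\cdot(LR)=\mathrm{Id}_n$, and invertibility (Theorem \ref{Teo: J inversible}) fixes $M(\alpha)$ and makes the right-hand side meaningful. For comparison, the paper does not perform this verification at all: it observes that $J(\alpha,u)$ is symmetric, invertible, tridiagonal with positive definite $(n-1)$th principal minor and refers to \cite[Proposition 25]{DrMa09}. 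A self-contained check along your lines is therefore a reasonable alternative, but only after repairing the entrywise identification above.
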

\begin{proof}
Since $J(\alpha,u)$ is symmetric, invertible, tridiagonal and their $(n-1)$th principal minor is positive definite, the proof follows in the same way as that of \cite[Proposition 25]{DrMa09}.
\end{proof}
%
%

From the explicitation of the inverse of the Jacobian matrix $J(A,U)$ on the points of the real path
determined by (\ref{ec: sistema_A}), we can finally obtain estimates on the condition number of such a path.

Let $\alpha^*> 0$ and $\alpha_*>0$ constants independents of $h$ be given. Then Theorem \ref{Teo: unicidad} proves that (\ref{ec: sistema_A}) has a unique positive solution with $A=\alpha$ for every $\alpha$ in the real interval $\mathcal{I} := \mathcal{I}(\alpha_*, \alpha^*)$ whose endpoints are $\alpha_*$ and $\alpha^*$, which we denote by
$\big(u_1(\alpha), U_2\big(u_1(\alpha)\big) \klk U_n\big(u_1(\alpha)\big)\big)$. We bound the condition number
$$
\kappa := \max\{\|\varphi'(\alpha)\|_\infty : \alpha \in \mathcal{I}\},
$$
associated to the function $\varphi: \mathcal{I} \to \R^n$, $\varphi(\alpha):=\big(u_1(\alpha), U_2\big(u_1(\alpha)\big) \klk U_n\big(u_1(\alpha)\big)\big)$.

For this purpose, from the Implicit Function Theorem we have
\begin{eqnarray*}
\|\varphi'(\alpha)\|_\infty &=& \Big\|\Big(\frac{\partial F}{\partial U} \big(\alpha,\varphi(\alpha)\big)\Big)^{-1} \frac{\partial F}{\partial A}\big(\alpha,\varphi(\alpha)\big)\Big\|_\infty\\
&=& \Big\|J^{-1}\big(\alpha,\varphi(\alpha)\big) \frac{\partial F}{\partial A}\big(\alpha,\varphi(\alpha)\big)\Big\|_\infty.
\end{eqnarray*}
We observe that $(\partial F/\partial A)(\alpha, \varphi(\alpha))\!=\!\Big(0,\dots ,0, -h g_2\big(U_n\big(u_1(\alpha)\big)\big)\Big)^t$ holds. From Proposition \ref{Prop: factoriz J^{-1}} we obtain
$$
\|\varphi'(\alpha)\|_\infty = \Big\| \frac{h g_2\big(U_n\big(u_1(\alpha)\big)\big)} {\det\big(J\big(\alpha,\varphi(\alpha)\big)\big)} \Big(1,{U_2'\big(u_1(\alpha)\big)}, \dots ,  U_n'\big(u_1(\alpha)\big) \Big)^t\Big\|_\infty.
$$
Combining this identity with (\ref{ec: A'=det 1}), we conclude that
$$
\|\varphi'(\alpha)\|_\infty = \Big\| \frac{1} {A'\big(u_1(\alpha)\big)} \Big(1,{U_2'\big(u_1(\alpha)\big)}, \dots ,  U_n'\big(u_1(\alpha)\big) \Big)^t\Big\|_\infty.
$$
From Lemma \ref{Lema: positividad}, we deduce the following proposition.
\begin{proposition}\label{Prop: Nro condicion}
Let $\alpha^*> 0$ and $\alpha_*>0$ constants independents of $h$ be given. Let $g$ and $G$ be the functions defined in (\ref{ec: definicion g}) and (\ref{ec: definicion G}) respectively. Suppose that
\begin{itemize}
\item $G$ and $g$  are surjective functions,
\item exists $d \in [0,1)$ such that $\big(\ln (G_1^d(x)/g_2^2(x))\big)' \ge 0$ for all $x>0$,
\item $G''(x) \ge 0$ for all $x>0$,
\end{itemize}
hold, where $G_1$ is the primitive function of $g_1$ such that $G_1(0)=0$. Then there exists $M(\mathcal{I}) > 0$ such that
$$
\|\varphi'(\alpha)\|_\infty
=\frac{U_n'\big(u_1(\alpha)\big)}{A'\big(u_1(\alpha)\big)}
$$
holds for $\alpha \in \mathcal{I}$ and $n > 1 + M(\mathcal{I})/(2-2d)$.
\end{proposition}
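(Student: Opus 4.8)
The plan is to show that the supremum defining $\kappa$ is, coordinate-wise, dominated by the last coordinate of the vector $\varphi'(\alpha)$. By the computation carried out just before the statement, we already know that
$$
\|\varphi'(\alpha)\|_\infty = \Big\| \frac{1}{A'\big(u_1(\alpha)\big)} \Big(1, U_2'\big(u_1(\alpha)\big), \dots, U_n'\big(u_1(\alpha)\big)\Big)^t \Big\|_\infty,
$$
so the only thing to verify is that, among the entries $1, U_2'(u_1(\alpha)), \dots, U_n'(u_1(\alpha))$, the last one is the largest in absolute value, and that $A'(u_1(\alpha)) > 0$ so the prefactor is positive and can be pulled out of the infinity norm without changing signs.

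First I would fix $\alpha \in \mathcal{I}$ and recall that, since $\mathcal{I}$ has endpoints $\alpha_*$ and $\alpha^*$, there is a constant $\mathcal{A} := \max\{\alpha_*,\alpha^*\}$ with $\mathcal{I} \subseteq (0,\mathcal{A}]$; by Theorem \ref{Teo: derivada alpha positiva} there is $M(\mathcal{A}) > 0$ (which I would rename $M(\mathcal{I})$) so that $A'(u_1) > 0$ on $A^{-1}((0,\mathcal{A}]) \cap \R_{>0}$ whenever $n > 1 + M(\mathcal{I})/(2-2d)$. In particular $A'\big(u_1(\alpha)\big) > 0$. Then I would invoke Lemma \ref{Lema: positividad}: item (\ref{Lema: positividad - item 3}) gives $U_k'(u_1) - U_{k-1}'(u_1) > 0$ for $2 \le k \le n$, so the sequence $U_1'(u_1) = 1, U_2'(u_1), \dots, U_n'(u_1)$ is strictly increasing and, by item (\ref{Lema: positividad - item 4}), all its terms exceed $1$ and are positive. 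Hence the maximum of $\{1, U_2'(u_1(\alpha)), \dots, U_n'(u_1(\alpha))\}$ is exactly $U_n'(u_1(\alpha))$, and all entries are nonnegative, so
$$
\|\varphi'(\alpha)\|_\infty = \frac{1}{A'\big(u_1(\alpha)\big)} \max_{1 \le k \le n} U_k'\big(u_1(\alpha)\big) = \frac{U_n'\big(u_1(\alpha)\big)}{A'\big(u_1(\alpha)\big)},
$$
which is the claimed identity.

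There is essentially no hard step here; the statement is a bookkeeping corollary of the monotonicity already packaged in Lemma \ref{Lema: positividad} together with the positivity of $A'$ from Theorem \ref{Teo: derivada alpha positiva}. The only point requiring a little care is the quantifier: the constant $M(\mathcal{I})$ must be chosen uniformly for all $\alpha$ in the interval, which is immediate because $\mathcal{I}$ is bounded, so one may take $M(\mathcal{I}) := M(\mathcal{A})$ with $\mathcal{A} = \max\{\alpha_*,\alpha^*\}$ and the dependence on $h$ is absent by the same property in Theorem \ref{Teo: derivada alpha positiva}. I would also note in passing that this proposition is precisely what reduces the later condition-number estimate (Theorem \ref{Teo: Cota Nro Condicion}) to bounding the single scalar ratio $U_n'/A'$, which is why it is isolated as a separate statement.
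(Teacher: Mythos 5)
Your proof is correct and follows essentially the same route as the paper: the identity $\|\varphi'(\alpha)\|_\infty=\big\|\tfrac{1}{A'(u_1(\alpha))}(1,U_2'(u_1(\alpha)),\dots,U_n'(u_1(\alpha)))^t\big\|_\infty$ is taken from the computation preceding the statement, and the paper likewise concludes by invoking Lemma \ref{Lema: positividad} (monotonicity and positivity of the $U_k'$) together with $A'(u_1)>0$ from Theorem \ref{Teo: derivada alpha positiva}, exactly as you do, including the uniform choice of $M(\mathcal{I})$ over the bounded interval.
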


\noindent Combining Proposition \ref{Prop: Nro condicion} and (\ref{ec: Cota Aprima G-crec}) we conclude that
$$
\begin{array}{rcl}
\|\varphi'(\alpha)\|_\infty &<& \dfrac{4 G_1\big(U_n\big(u_1(\alpha)\big)\big)}{(1-d) \alpha g_1\big(U_n\big(u_1(\alpha)\big)\big)}.
\end{array}
$$
Applying Lemma \ref{Lema: Cota inf u1 alpha} and Proposition \ref{Prop: cota sup u1 y un alpha} we deduce the
following result.
\begin{theorem}\label{Teo: Cota Nro Condicion}
Let $\alpha^*> 0$ and $\alpha_*>0$ constants independents of $h$ be given. Let $g$ and $G$ be the functions defined in (\ref{ec: definicion g}) and (\ref{ec: definicion G}) respectively. Suppose that
\begin{itemize}
\item $G$ and $g$  are surjective functions,
\item exists $d \in [0,1)$ such that $\big(\ln (G_1^d(x)/g_2^2(x))\big)' \ge 0$ for all $x>0$,
\item $G''(x) \ge 0$ for all $x>0$,
\end{itemize}
hold, where $G_1$ is the primitive function of $g_1$ such that $G_1(0)=0$. Then there exists a constant $\kappa_1(\alpha_*,\alpha^*) > 0$ independent of $h$ such that
$$
\kappa < \kappa_1(\alpha_*,\alpha^*).
$$
\end{theorem}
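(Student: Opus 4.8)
The plan is to chain together the bounds already established and the condition-number identity from Proposition \ref{Prop: Nro condicion}. The starting point is the displayed inequality immediately preceding the statement, namely
$$
\|\varphi'(\alpha)\|_\infty < \frac{4\,G_1\big(U_n\big(u_1(\alpha)\big)\big)}{(1-d)\,\alpha\, g_1\big(U_n\big(u_1(\alpha)\big)\big)},
$$
valid for $n > 1 + M(\mathcal{I})/(2-2d)$ and $\alpha \in \mathcal{I}$. The quantity $\kappa$ is the maximum of $\|\varphi'(\alpha)\|_\infty$ over the compact interval $\mathcal{I}$ with endpoints $\alpha_*$ and $\alpha^*$, so it suffices to bound the right-hand side above uniformly in $\alpha \in \mathcal{I}$ by a constant depending only on $\alpha_*,\alpha^*$ (and the fixed functions $g_1,g_2,d$), but not on $h = 1/(n-1)$.

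First I would control the denominator from below. Since $g_1$ is positive and strictly increasing on $\Rpos$, and since Lemma \ref{Lema: Cota inf u1 alpha} together with Corollary \ref{Coro: cota inf un alpha} gives $U_n(u_1(\alpha)) > g^{-1}(\alpha) \ge g^{-1}(\alpha_*) > 0$ for all $\alpha \in \mathcal{I}$ (note $\alpha \mapsto g^{-1}(\alpha)$ is increasing, so the infimum over $\mathcal{I}$ is at the left endpoint), we get $g_1\big(U_n(u_1(\alpha))\big) > g_1\big(g^{-1}(\alpha_*)\big) =: m_1 > 0$, a positive constant independent of $h$. Likewise $\alpha \ge \alpha_*$ on $\mathcal{I}$. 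Second I would control the numerator from above: by Proposition \ref{Prop: cota sup u1 y un alpha} we have the $h$-independent upper bound $u_1 < g^{-1}(\alpha/\sqrt{1-d})$ and hence
$$
U_n(u_1(\alpha)) < G^{-1}\Big(G\Big(g^{-1}\Big(\tfrac{\alpha}{\sqrt{1-d}}\Big)\Big) + \tfrac{\alpha^2}{2}\Big) \le C_1(\alpha^*) =: C^*,
$$
where monotonicity of $G^{-1}, G, g^{-1}$ lets me replace $\alpha$ by $\alpha^*$ to obtain an $h$-independent constant; consequently $G_1\big(U_n(u_1(\alpha))\big) < G_1(C^*) =: M^*$ since $G_1$ is increasing. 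Combining these,
$$
\|\varphi'(\alpha)\|_\infty < \frac{4\,M^*}{(1-d)\,\alpha_*\,m_1} =: \kappa_1(\alpha_*,\alpha^*),
$$
which is visibly a positive constant independent of $h$. Taking the maximum over $\alpha \in \mathcal{I}$ yields $\kappa < \kappa_1(\alpha_*,\alpha^*)$, as claimed.

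There is essentially no deep obstacle here — the work was all done in the preceding lemmas. The one point requiring a little care is checking that each extremum over $\mathcal{I}$ is attained at a definite endpoint ($\alpha_*$ for lower bounds on $\alpha$ and on $g^{-1}(\alpha)$, $\alpha^*$ for the upper bound on $U_n$), which follows from the monotonicity of $g^{-1}$, $G$, $G^{-1}$, $G_1$ and $g_1$ established in Lemma \ref{Lema: Crecimientos 2} and the standing hypotheses. A second minor point is that $\mathcal{I}$ need not be $[\alpha_*,\alpha^*]$ with $\alpha_* \le \alpha^*$; the interval has endpoints $\alpha_*,\alpha^*$ in either order, but since both are fixed positive constants independent of $h$, replacing $\alpha$ by $\min\{\alpha_*,\alpha^*\}$ in lower bounds and by $\max\{\alpha_*,\alpha^*\}$ in upper bounds gives the same $h$-independent conclusion. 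Finally, one should remark that the constant implicitly depends on the threshold $n > 1 + M(\mathcal{I})/(2-2d)$ only through the validity of Proposition \ref{Prop: Nro condicion}, which is already part of the hypotheses in force.
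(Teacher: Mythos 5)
Your proposal is correct and follows essentially the same route as the paper: the paper also starts from the displayed bound $\|\varphi'(\alpha)\|_\infty < 4G_1\big(U_n(u_1(\alpha))\big)/\big((1-d)\alpha g_1(U_n(u_1(\alpha)))\big)$ obtained from Proposition \ref{Prop: Nro condicion} and (\ref{ec: Cota Aprima G-crec}), and then invokes Lemma \ref{Lema: Cota inf u1 alpha} and Proposition \ref{Prop: cota sup u1 y un alpha} to bound numerator and denominator uniformly in $\alpha$ independently of $h$. Your only (harmless) deviation is using Corollary \ref{Coro: cota inf un alpha} for the lower bound on $U_n$ rather than the lower bound on $u_1$, and you spell out the monotonicity and endpoint details the paper leaves implicit.
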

%
%
\section{An efficient numerical algorithm} \label{seccion: algoritmo}
As a consequence of the well conditioning of the positive solutions of (\ref{ec: sistema_A}), we shall exhibit an algorithm computing the positive solution of (\ref{ec: sistema_A}) for $A=\alpha^*$. This algorithm is a homotopy continuation method (see, e.g., \cite[\S 10.4]{OrRh70}, \cite[\S 14.3]{BlCuShSm98}) having a cost which is {\em linear} in $n$.

There are two different approaches to estimate the cost of our procedure: using Kantorovich--type estimates as in \cite[\S 10.4]{OrRh70}, and using Smale--type estimates as in \cite[\S 14.3]{BlCuShSm98}. We shall use the former, since we are able to control the condition number in suitable neighborhoods of the real paths determined by (\ref{ec: sistema_A}). Furthermore, the latter does not provide significantly better estimates.

Let $\alpha_* > 0$ be a constant independent of $h$. Let $g$ and $G$ be the functions defined in (\ref{ec: definicion g}) and (\ref{ec: definicion G}) respectively. Suppose that
\begin{itemize}
\item $G$ and $g$  are surjective functions,
\item exists $d \in [0,1)$ such that $\big(\ln (G_1^d(x)/g_2^2(x))\big)' \ge 0$ for all $x>0$,
\item $G''(x) \ge 0$ and $g''(x) \ge 0$ for all $x>0$,
\end{itemize}
hold, where $G_1$ is the primitive function of $g_1$ such that $G_1(0)=0$. Then the path defined by the positive solutions of (\ref{ec: sistema_A}) with $\alpha \in [\alpha_*, \alpha^*]$ is smooth, and the estimate of Theorem \ref{Teo: Cota Nro Condicion} hold. Assume that we are given a suitable approximation $u^{(0)}$ of the positive solution $\varphi(\alpha_*)$ of (\ref{ec: sistema_A}) for $A=\alpha_*$. In this section we exhibit an algorithm which, on input $u^{(0)}$, computes an approximation of $\varphi(\alpha^*)$. We recall that $\varphi$ denotes the function which maps each $\alpha > 0$ to the positive solution of (\ref{ec: sistema_A}) for $A=\alpha$. More precisely $\varphi : [\alpha_*,\alpha^*] \to \R^n$ is the function which maps each $\alpha \in [\alpha_*,\alpha^*]$ to the positive solution of (\ref{ec: sistema_A}) for $A=\alpha$, namely
$$
\varphi(\alpha):= \big(u_1(\alpha), \dots , u_n(\alpha)\big) := \big(u_1(\alpha), U_2\big(u_1(\alpha)\big),\dots, U_n\big(u_1(\alpha)\big)\big).
$$

From Lemma \ref{Lema: cota sup un alpha bis} and Lemma \ref{Lema: Cota inf u1 alpha}, we have that the coordinates of the positive solution of (\ref{ec: sistema_A}) tend to zero when $\alpha$ tends to zero. Therefore, for $\alpha$ small enough, we obtain a suitable approximation of the positive solution (\ref{ec: sistema_A}) for $A=\alpha_*$, and we track the positive real path determined by (\ref{ec: sistema_A}) until $A=\alpha^*$.

Let $0 < \alpha_* < \alpha^*$ be a constant independent of $h$ to be determined. Fix $\alpha \in [\alpha_*, \alpha^*]$. By Lemma \ref{Lema: cota sup un alpha bis} it follows that $\varphi(\alpha)$ is an interior point of the compact set
$$K_{\alpha}:=\{u \in \R^n : \|u\|_{\infty} \le 2 C_2(\alpha)\},$$
where
$$
C_2(\alpha) := G^{-1}\Big(G\Big(g^{-1}\Big({\alpha}{\widehat{C}(\alpha)}\Big)\Big) + \frac{\alpha^2}{2}\Big),
$$
with
$$
\widehat{C}(\alpha):= 1 + \dfrac{g_2'\big(C_1(\alpha)\big) \alpha^2}{2g_2\big(g^{-1}(\alpha)/e^M\big)G'\big(g^{-1}(\alpha)/e^M\big)},
$$
$$
C_1(\alpha) := G^{-1}\Big(G\Big(g^{-1}\Big(\frac{\alpha}{\sqrt{1-d}}\Big)\Big) + \frac{\alpha^2}{2}\Big),
$$
and $M:=g'_1(C_1(\alpha))$.

First we prove that the Jacobian matrix $J_\alpha(u):=({\partial F}/{\partial U})(\alpha,u)$ is invertible in a suitable subset of $K_{\alpha}$. Let $u \in \R^n$ and $v \in \R^n$ be points with
$$
\|u-\varphi(\alpha)\|_\infty < \delta_{\alpha}, \
\|v-\varphi(\alpha)\|_\infty < \delta_{\alpha},
$$
where $\delta_{\alpha} > 0$ is a constant to be determined. Note that if $\delta_\beta \le C_2(\alpha)$ then $u \in K_\alpha$ and $v \in K_\alpha$. By the Mean Value Theorem, we see that the entries of the diagonal matrix $J_\alpha(u)- J_\alpha(v)$ satisfy the estimates
$$
\begin{array}{lcll}
\Big|\big(J_\alpha(u)- J_\alpha(v)\big)_{ii} \Big| & \le & 2h^2
g_1''\big(2 C_2(\alpha)\big) \delta_{\alpha}, \ (1 \le i \le n-1)
\\[2ex]
\Big|\big(J_\alpha(u)- J_\alpha(v)\big)_{nn}\Big| & \le & 2{h}
\max\{{\alpha} g_2''\big(2C_2(\alpha)\big),g_1''\big(2
C_2(\alpha)\big)\} \delta_{\alpha}.
\end{array}
$$
By Theorem \ref{Teo: J inversible} and Proposition \ref{Prop: factoriz J^{-1}} we have that the matrix $J_{\varphi(\alpha)} := J_\alpha(\varphi(\alpha)) =({\partial F}/{\partial U})(\alpha ,\varphi(\alpha))$ is invertible and
$$\big(J_{\varphi(\alpha)}^{-1}\big)_{ij} =  \sum_{k=\max\{i,j\}}^{n-1} \frac{U_i'\big(u_1(\alpha)\big) U_j'\big(u_1(\alpha)\big)}{U'_k\big(u_1(\alpha)\big) U'_{k+1}\big(u_1(\alpha)\big)}+ \frac{U_i'\big(u_1(\alpha)\big) U_j'\big(u_1(\alpha)\big)}{U_n'\big(u_1(\alpha)\big)\det(J_{\varphi(\alpha)})} $$
holds for $1\le i, j \le n$. According to Lemma \ref{Lema: positividad}, we have $U_n'\big(u_1(\alpha)\big) \ge \cdots \ge U_2'\big(u_1(\alpha)\big) \ge 1$. These remarks show that
\begin{equation}\label{ec: Cota1 Exist Inv}
\begin{array}{l}
\Big\|J_{\varphi(\alpha)}^{-1}\big(J_\alpha(u)- J_\alpha(v)\big)\Big\|_{\infty} \le \\
\\
 \quad \le \eta_\alpha \delta_{\alpha} \bigg( 2  + \displaystyle\frac{h^2 +\sum_{j=2}^{n-1} h^2U'_j\big(u_1(\alpha)\big) + hU'_n\big(u_1(\alpha)\big)}{|\det(J_{\varphi(\alpha)})|}
 \bigg)\\
\\
 \quad \le 2\eta_\alpha \delta_{\alpha} \bigg( 1  + \displaystyle\frac{hU'_n\big(u_1(\alpha)\big)}{|\det(J_{\varphi(\alpha)})|}
 \bigg),
\end{array}
\end{equation}
where $\eta_\alpha:=2\max\{g_1''\big(2 C_2(\alpha)\big), \alpha g_2''\big(2 C_2(\alpha) \big) \}$. From (\ref{ec: A'=det 1}), we obtain the following identity:
$$\frac{hU'_n\big(u_1(\alpha)\big)}{|\det(J_{\varphi(\alpha)})|} = \frac{U'_n\big(u_1(\alpha)\big)\big)} {A'\big(u_1(\alpha)\big) g_2\big(u_n(\alpha)\big)}.$$
From (\ref{ec: Cota Aprima G-crec}), we have that
\begin{equation}\label{ec: Cota2 Exist Inv}
\frac{hU'_n\big(u_1(\alpha)\big)}{|\det(J_{\varphi(\alpha)})|} \!=\! \frac{U'_n\big(u_1(\alpha)\big)} {A'\big(u_1(\alpha)\big) g_2\big(u_n(\alpha)\big)} \!\le\!
\frac{4G_1\big(u_n(\alpha)\big)} {(1-d)g_1\big(u_n(\alpha)\big) g_2\big(u_n(\alpha)\big) A\big(u_1(\alpha)\big)}.
\end{equation}
From Lemma \ref{Lema: Crecimientos 2}, we have that $G'(x) > 0$ and $g'(x) > 0$ in $\Rpos$. Since $G$ is an increasing function, we deduce that
$$
\frac{G_1\big(u_n(\alpha)\big)} {g_1\big(u_n(\alpha)\big) g_2\big(u_n(\alpha)\big)}  <
\frac{1}{2 g_2'(u_n(\alpha))}.
$$
Combining the last inequality with (\ref{ec: Cota1 Exist Inv}) and (\ref{ec: Cota2 Exist Inv}), we obtain
$$
\Big\|J_{\varphi(\alpha)}^{-1}\big(J_\alpha(u)-
J_\alpha(v)\big)\Big\|_{\infty} \le
 2\eta_\alpha \delta_{\alpha} \bigg( 1  + \frac{2} {(1-d) g_2'\big(u_n(\alpha)\big) A\big(u_1(\alpha)\big)}
\bigg).
$$
From Corollary \ref{Coro: cota inf un alpha}, we have that
\begin{equation} \label{ec: Cota3 Exist Inv}
\Big\|J_{\varphi(\alpha)}^{-1}\big(J_\alpha(u)-
J_\alpha(v)\big)\Big\|_{\infty} \le  \bigg( \displaystyle\frac{4
\eta_\alpha (\theta^*+1)} {g_2'\big(g^{-1}(\alpha)\big) (1 - d)
\alpha}\bigg) \delta_{\alpha}.
\end{equation}
with $\theta^*:= g_2'\big(g^{-1}(\alpha^*)\big) (1 - d) \alpha^*/2$. Hence, defining $\delta_\beta$ in the following way:
\begin{equation}\label{ec: Def delta_alpha}
\delta_\alpha :=\min\Big\{ \displaystyle\frac{g_2'\big(g^{-1}(\alpha)\big) (1 - d) \alpha} {16 \eta_\alpha (\theta^*+1)}, C_2(\alpha) \Big\},
\end{equation}
we obtain
\begin{equation} \label{ec: Cota Exist Inv}
\Big\|J_{\varphi(\alpha)}^{-1}\Big(J_\alpha(u)- J_\alpha(v)\Big)\Big\|_{\infty} \le \frac{1}{4}.
\end{equation}
In particular, for $v=\varphi(\alpha)$, this bound allows us to consider $J_\alpha(u)$ as a perturbation of $J_{\varphi(\alpha)}$. More precisely, by a standard perturbation lemma (see, e.g., \cite[Lemma 2.3.2]{OrRh70}) we deduce that $J_\alpha(u)$ is invertible for every $u \in \mathcal{B}_{\delta_{\alpha}}(\varphi(\alpha))$ and we obtain the following upper bound:
\begin{equation} \label{ec: Cota Inv(u) por J}
\Big\|J_\alpha(u)^{-1} J_{\varphi(\alpha)}\Big\|_{\infty} \le \frac{4}{3}.
\end{equation}

In order to describe our method, we need a sufficient condition for the convergence of the standard Newton
iteration associated to (\ref{ec: sistema_A}) for any $\alpha \in [\alpha_*,\alpha^*]$. Arguing as in \cite[10.4.2]{OrRh70} we deduce the following remark, which in particular implies that the Newton iteration under consideration converges.
\begin{remark}\label{Remark: cond suf conv Newton}
Set $\delta:=\min\{ \delta_{\alpha}: \alpha \in [\alpha_*,\alpha^*] \}$. Fix $\alpha \in [\alpha_*,\alpha^*]$ and consider the Newton iteration
$$
u^{(k+1)} = u^{(k)}- J_\alpha(u^{(k)})^{-1} F(\alpha, u^{(k)}) \quad (k \ge 0), $$
starting at $u^{(0)} \in K_{\alpha}$. If $\|u^{(0)} - \varphi(\alpha)\|_\infty < \delta$, then
$$
\|u^{(k)}- \varphi(\alpha)\|_{\infty} < \frac{\delta}{3^{k}}
$$
holds for $k \ge 0$.
\end{remark}

Now we can describe our homotopy continuation method. Let
$\alpha_0:=\alpha_* < \alpha_1 < \cdots < \alpha_N:=\alpha^*$ be a uniform
partition of the interval $[\alpha_*,\alpha^*]$, with $N$ to be fixed. We
define an iteration as follows:
\begin{eqnarray}
u^{(k+1)} &=& u^{(k)} - J_{\alpha_k}(u^{(k)})^{-1} F(\alpha_k,u^{(k)}) \quad (0\le k\le N-1), \label{ec: 1ra iteracion}\\
u^{(N+k+1)} &=& u^{(N+k)} - J_{\alpha^*}(u^{(N+k)})^{-1} F(\alpha^*,u^{(N+k)}) \quad (k\ge 0). \label{ec: 2da iteracion}
\end{eqnarray}

In order to see that the iteration (\ref{ec: 1ra iteracion})--(\ref{ec: 2da iteracion}) yields an approximation of the positive solution $\varphi(\alpha^*)$ of (\ref{ec: sistema_A}) for $A=\alpha^*$, it is necessary to obtain a condition assuring that (\ref{ec: 1ra iteracion}) yields an attraction point for the Newton iteration (\ref{ec: 2da iteracion}). This relies on a suitable choice for $N$, which we now discuss.

By Theorem \ref{Teo: Cota Nro Condicion}, we have
\begin{eqnarray*}
 \|\varphi(\alpha_{i+1})-\varphi(\alpha_i)\|_{\infty} &\le& \max\{\|\varphi'(\alpha)\|_\infty : \alpha \in [\alpha_*,\alpha^*]\}\, |\alpha_{i+1}-\alpha_i| \\
 &\le& \kappa_1 \frac{\alpha^*}{N},
\end{eqnarray*}
for $0 \le i \le N-1$, where $\kappa_1$ is an upper bound of the condition number independent of $h$. Thus, for $N :=\lceil 3 \alpha^* \kappa_1 / \delta \rceil + 1 = O(1) $, by the previous estimate we obtain the following inequality:
\begin{equation}\label{ec: cota beta(i+1)-beta(i)}
\|\varphi(\alpha_{i+1})-\varphi(\alpha_i)\|_{\infty} < \frac{\delta}{3}
\end{equation}
for $0 \le i \le N-1$. Our next result shows that this implies the desired result.
\begin{lemma}\label{Lema: conv 1ra iteracion}
Set $N :=\lceil 3 \alpha^* \kappa_1 / \delta \rceil + 1$. Then, for every $u^{(0)}$ with $\|u^{(0)} - \varphi(\alpha_*)\|_\infty < \delta$, the point $u^{(N)}$ defined in (\ref{ec: 1ra iteracion}) is an attraction point for the Newton iteration (\ref{ec: 2da iteracion}).
\end{lemma}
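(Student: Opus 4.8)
The plan is to track the error $\|u^{(i)}-\varphi(\alpha_i)\|_\infty$ along the discrete homotopy and show, by induction on $i$, that it stays below $\delta$ for $0\le i\le N$; once this is established, the claim follows by applying Remark \ref{Remark: cond suf conv Newton} one last time at $\alpha=\alpha^*=\alpha_N$. Throughout, the partition is uniform with $N=\lceil 3\alpha^*\kappa_1/\delta\rceil+1$, so that the key inequality (\ref{ec: cota beta(i+1)-beta(i)}), i.e. $\|\varphi(\alpha_{i+1})-\varphi(\alpha_i)\|_\infty<\delta/3$, is available; recall also $\delta=\min\{\delta_\alpha:\alpha\in[\alpha_*,\alpha^*]\}$ and, by the definition (\ref{ec: Def delta_alpha}) of $\delta_\alpha$, that $\delta\le\delta_\alpha\le C_2(\alpha)$ for every $\alpha\in[\alpha_*,\alpha^*]$.

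For the base case, the hypothesis $\|u^{(0)}-\varphi(\alpha_*)\|_\infty<\delta$ is exactly the desired bound at $i=0$ since $\alpha_0=\alpha_*$; moreover, as $\varphi(\alpha_*)$ is an interior point of $K_{\alpha_*}=\{u:\|u\|_\infty\le 2C_2(\alpha_*)\}$ with $\|\varphi(\alpha_*)\|_\infty<C_2(\alpha_*)$ and $\delta\le C_2(\alpha_*)$, we get $u^{(0)}\in K_{\alpha_*}$, so Remark \ref{Remark: cond suf conv Newton} is applicable at $\alpha_0$. For the inductive step, suppose $\|u^{(k)}-\varphi(\alpha_k)\|_\infty<\delta$ and $u^{(k)}\in K_{\alpha_k}$. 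The map $u^{(k)}\mapsto u^{(k+1)}$ in (\ref{ec: 1ra iteracion}) is a single Newton step for (\ref{ec: sistema_A}) at $A=\alpha_k$, so Remark \ref{Remark: cond suf conv Newton} (with $k=1$ there) gives $\|u^{(k+1)}-\varphi(\alpha_k)\|_\infty<\delta/3$. Combining this with the triangle inequality and (\ref{ec: cota beta(i+1)-beta(i)}),
\[
\|u^{(k+1)}-\varphi(\alpha_{k+1})\|_\infty\le \|u^{(k+1)}-\varphi(\alpha_k)\|_\infty+\|\varphi(\alpha_k)-\varphi(\alpha_{k+1})\|_\infty<\tfrac{\delta}{3}+\tfrac{\delta}{3}<\delta .
\]
Since $\delta\le C_2(\alpha_{k+1})$ and $\|\varphi(\alpha_{k+1})\|_\infty<C_2(\alpha_{k+1})$, the same estimate yields $u^{(k+1)}\in K_{\alpha_{k+1}}$, closing the induction.

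Taking $i=N$ produces $\|u^{(N)}-\varphi(\alpha^*)\|_\infty<\delta$ with $u^{(N)}\in K_{\alpha^*}$. A final application of Remark \ref{Remark: cond suf conv Newton} at $\alpha=\alpha^*$ then shows that the Newton iteration (\ref{ec: 2da iteracion}) started at $u^{(N)}$ satisfies $\|u^{(N+k)}-\varphi(\alpha^*)\|_\infty<\delta/3^{k}$, hence converges to $\varphi(\alpha^*)$; that is, $u^{(N)}$ is an attraction point for (\ref{ec: 2da iteracion}), as claimed. I do not expect a genuine obstacle here: the only point requiring attention is the bookkeeping that keeps each iterate inside the compact set $K_{\alpha_k}$ at its own parameter value, so that Remark \ref{Remark: cond suf conv Newton} is legitimately invoked at every stage — and this is precisely what the clause $\delta_\alpha\le C_2(\alpha)$ in (\ref{ec: Def delta_alpha}) was arranged to guarantee.
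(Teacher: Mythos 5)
Your proof is correct and follows essentially the same route as the paper: induction on the step index, one application of Remark \ref{Remark: cond suf conv Newton} per step to get the factor $\delta/3$, and the triangle inequality combined with (\ref{ec: cota beta(i+1)-beta(i)}) to close the induction. The only difference is your explicit bookkeeping that each iterate lies in $K_{\alpha_k}$ via $\delta_\alpha\le C_2(\alpha)$, which the paper leaves implicit; this is a harmless (indeed slightly more careful) addition.
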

\begin{proof}
By hypothesis, we have $\|u^{(0)}-\varphi(\alpha_*)\|_\infty < \delta$. Arguing inductively, suppose that
$\|u^{(k)}-\varphi(\alpha_k)\|_\infty < \delta$ holds for a given $0\le k < N$. By Remark \ref{Remark: cond suf conv Newton} we have that $u^{(k)}$ is an attraction point for the Newton iteration associated to (\ref{ec: sistema_A}) for $A=\alpha_k$. Furthermore, Remark \ref{Remark: cond suf conv Newton} also shows that $\|u^{(k+1)} - \varphi(\alpha_k)\|_\infty < \delta/3$ holds. Then
\begin{eqnarray*}
\|u^{(k+1)}-\varphi(\alpha_{k+1})\|_{\infty} &\le& \|u^{(k+1)}-\varphi(\alpha_{k})\|_{\infty} + \|\varphi(\alpha_{k})-\varphi(\alpha_{k+1})\|_{\infty} \\
&<& \mbox{$\frac {1}{3}$}\delta + \mbox{$\frac {1}{3}$}\delta  < \delta,
\end{eqnarray*}
where the inequality $\|\varphi(\alpha_{k+1})-\varphi(\alpha_{k})\|_{\infty} < \delta/3$ follows by (\ref{ec: cota beta(i+1)-beta(i)}). This completes the inductive argument and shows in particular that $u^{(N)}$ is an
attraction point for the Newton iteration (\ref{ec: 2da iteracion}).
\end{proof}

Next we consider the convergence of (\ref{ec: 2da iteracion}), starting with a point $u^{(N)}$ satisfying the condition $\|u^{(N)}-\varphi(\alpha^*)\|_\infty < \alpha \le \delta_{\alpha^*}$. Combining this inequality with (\ref{ec: Def delta_alpha}) we deduce that $u^{(N)} \in K_{\alpha^*}$. Furthermore, we see that
{\small \begin{equation}\label{ec: conv 2da iteracion}
\begin{array}{l}
\|u^{(N+1)} \!-\! \varphi(\alpha^*) \|_{\infty}\!=\! \|u^{(N)} \!-\! J_{\alpha^*}(u^{(N)})^{-1}
F(\alpha^*, u^{(N)}) \!-\! \varphi(\alpha^*)\|_{\infty}\\[1ex]
\qquad =\Big\|\! J_{\alpha^*}(u^{(N)})^{-1} \big(\!
J_{\alpha^*}(u^{(N)}) \big(u^{(N)} \!-\! \varphi(\alpha^*)\big) \!-\!
F(\alpha^*,u^{(N)}) \!+\! F(\alpha^*,\varphi(\alpha^*))\!\big)\Big\|_{\infty}\\[1ex]
\qquad \le \|\! J_{\alpha^*}(u^{(N)})^{-1}
J_{\varphi(\alpha^*)}\|_{\infty}\\
\qquad \qquad \qquad \Big\|J_{\varphi(\alpha^*)}^{-1} \big(\!
J_{\alpha^*}(u^{(N)}) \big(u^{(N)} \!-\! \varphi(\alpha^*)\big) \!-\!
F(\alpha^*,u^{(N)}) \!+\! F(\alpha^*,\varphi(\alpha^*))\!\big)\Big\|_{\infty}\\[1ex]
\qquad \le\! \|J_{\alpha^*}(u^{(N)})^{-1} J_{\varphi(\alpha^*)}\|_{\infty} \|J_{\varphi(\alpha^*)}^{-1}
\big(J_{\alpha^*}(u^{(N)})\!-\!J_{\alpha^*}(\xi)\big)\|_{\infty}
\|\big(u^{(N)} \!-\!\varphi(\alpha^*)\big)\|_{\infty},
\end{array}
\end{equation}}
where $\xi$ is a point in the segment joining the points $u^{(N)}$ and $\varphi(\alpha^*)$. Combining (\ref{ec: Cota3 Exist Inv}) and (\ref{ec: Cota Inv(u) por J}) we deduce that
$$\begin{array}{rcl}
\|u^{(N+1)}-\varphi(\alpha^*)\|_\infty &<&
\mbox{$\frac{4}{3}$} \big\|J_{\varphi(\alpha^*)}^{-1}
\big(J_{\alpha^*}(u^{(N)})\!-\!J_{\alpha^*}(\xi)\big)\big\|_{\infty} \delta_{\alpha^*}\\[1ex]
&<& \mbox{$\frac{4c}{3}$}\delta_{\alpha^*}^2 \le
\mbox{$\frac{1}{3}$} \delta_{\alpha^*},
\end{array}$$
holds, with $c\!:=\! \big(4 \eta_{\alpha^*} (\theta^*+1)\big)/\big(g_2'\big(g^{-1}(\alpha^*)\big) (1 - d) \alpha^* \big)$. By an inductive argument we conclude that the iteration (\ref{ec: 2da iteracion}) is well-defined and converges to the positive solution $\varphi(\alpha^*)$ of (\ref{ec: sistema_A}) for $A=\alpha^*$. Furthermore, we conclude that the point $u^{(N+k)}$, obtained from the point $u^{(N)}$ above after $k$ steps of the iteration (\ref{ec: 2da iteracion}), satisfies the estimate
$$ \|u^{(N+k)}-\varphi(\alpha^*)\|_{\infty} \le \hat{c} \big(\mbox{$\frac{4c}{3}$} \delta_{\alpha^*}\big)^{2^k} \le \hat{c} \Big(\frac{1}{3}\Big)^{2^k},$$
with $\hat{c}:={3}/{4 c }$. Therefore, in order to obtain an $\varepsilon$-approximation of $\varphi(\alpha^*)$, we have to perform $\log_2\log_3(3/4c\varepsilon)$ steps of the iteration (\ref{ec: 2da iteracion}). Summarizing, we have the following result.
\begin{lemma}\label{Lema: conv 2da iteracion}
Let $\varepsilon>0$ be given. Then, for every $u^{(N)} \in (\Rpos)^n$ satisfying the condition $\|u^{(N)}-
\varphi(\alpha^*)\|_\infty < \delta$, the iteration (\ref{ec: 2da iteracion}) is well-defined and the estimate
$\|u^{(N+k)}-\varphi(\alpha^*)\|_\infty < \varepsilon$ holds for $k \ge \log_2\log_3(3/4c\varepsilon)$.
\end{lemma}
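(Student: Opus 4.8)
The plan is to reuse almost verbatim the estimates assembled in the discussion preceding the statement; the only genuinely new ingredient is the bookkeeping that turns the quadratic contraction into the stated iteration count.

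First I would note that $\delta=\min\{\delta_\alpha:\alpha\in[\alpha_*,\alpha^*]\}\le\delta_{\alpha^*}$, so the hypothesis $\|u^{(N)}-\varphi(\alpha^*)\|_\infty<\delta$ places $u^{(N)}$ in the ball $\mathcal{B}_{\delta_{\alpha^*}}(\varphi(\alpha^*))$, which by the choice (\ref{ec: Def delta_alpha}) of $\delta_{\alpha^*}$ is contained in $K_{\alpha^*}$. On this ball the perturbation argument leading to (\ref{ec: Cota Inv(u) por J}) gives that $J_{\alpha^*}(u)$ is invertible with $\|J_{\alpha^*}(u)^{-1}J_{\varphi(\alpha^*)}\|_\infty\le 4/3$, so the first step of (\ref{ec: 2da iteracion}) is well-defined. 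I would then reproduce the quadratic-convergence computation (\ref{ec: conv 2da iteracion}): writing the Newton increment as $u^{(N+1)}-\varphi(\alpha^*)=J_{\alpha^*}(u^{(N)})^{-1}\big(J_{\alpha^*}(u^{(N)})(u^{(N)}-\varphi(\alpha^*))-F(\alpha^*,u^{(N)})+F(\alpha^*,\varphi(\alpha^*))\big)$ and invoking the integral mean value theorem to produce a point $\xi$ on the segment $[\varphi(\alpha^*),u^{(N)}]$ (again inside the convex ball), one gets $\|u^{(N+1)}-\varphi(\alpha^*)\|_\infty\le\|J_{\alpha^*}(u^{(N)})^{-1}J_{\varphi(\alpha^*)}\|_\infty\,\|J_{\varphi(\alpha^*)}^{-1}\big(J_{\alpha^*}(u^{(N)})-J_{\alpha^*}(\xi)\big)\|_\infty\,\|u^{(N)}-\varphi(\alpha^*)\|_\infty$. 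Bounding the first factor by $4/3$ via (\ref{ec: Cota Inv(u) por J}) and the middle factor by $c\,\|u^{(N)}-\varphi(\alpha^*)\|_\infty$ via the Lipschitz estimate (\ref{ec: Cota3 Exist Inv}), with $c:=4\eta_{\alpha^*}(\theta^*+1)/\big(g_2'(g^{-1}(\alpha^*))(1-d)\alpha^*\big)$, yields $\|u^{(N+1)}-\varphi(\alpha^*)\|_\infty<\frac{4c}{3}\|u^{(N)}-\varphi(\alpha^*)\|_\infty^{2}$.

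Next I would run the induction. Since $\|u^{(N)}-\varphi(\alpha^*)\|_\infty<\delta\le\delta_{\alpha^*}$ and (\ref{ec: Def delta_alpha}) forces $\frac{4c}{3}\delta_{\alpha^*}\le\frac13$, the bound above shows $u^{(N+1)}$ again lies in $\mathcal{B}_{\delta_{\alpha^*}}(\varphi(\alpha^*))$, so the whole argument applies with $u^{(N+1)}$ in place of $u^{(N)}$; iterating gives that (\ref{ec: 2da iteracion}) is well-defined for all $k\ge 0$ and, with $\hat c:=3/(4c)$,
$$\|u^{(N+k)}-\varphi(\alpha^*)\|_\infty\le\hat c\,\Big(\frac{4c}{3}\,\delta_{\alpha^*}\Big)^{2^k}\le\hat c\,\Big(\frac13\Big)^{2^k}.$$
Finally I would solve $\hat c\,(1/3)^{2^k}<\varepsilon$ for $k$: this is equivalent to $3^{2^k}>3/4c\varepsilon$, i.e.\ to $2^k>\log_3(3/4c\varepsilon)$, i.e.\ to $k>\log_2\log_3(3/4c\varepsilon)$, which is the asserted bound.

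I expect the only delicate point — and the one I would flag as the main obstacle — to be the inductive verification that every iterate $u^{(N+k)}$, together with each intermediate point $\xi$ coming from the mean value theorem, stays inside $\mathcal{B}_{\delta_{\alpha^*}}(\varphi(\alpha^*))$: invertibility of $J_{\alpha^*}$, hence well-definedness of (\ref{ec: 2da iteracion}) and validity of (\ref{ec: Cota Inv(u) por J})--(\ref{ec: Cota3 Exist Inv}), is guaranteed only there. This is exactly what the choice (\ref{ec: Def delta_alpha}) of $\delta_{\alpha^*}$, together with the resulting contraction constant $\frac{4c}{3}\delta_{\alpha^*}\le\frac13<1$, is designed to ensure, so once the induction is set up correctly the estimate propagates automatically.
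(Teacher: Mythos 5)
Your proposal is correct and follows essentially the same route as the paper: the same Newton-step decomposition (\ref{ec: conv 2da iteracion}), the same bounds (\ref{ec: Cota Inv(u) por J}) and (\ref{ec: Cota3 Exist Inv}) giving the contraction factor $\frac{4c}{3}\delta_{\alpha^*}\le\frac13$, the same inductive doubling estimate with $\hat c=3/(4c)$, and the same resolution of $\hat c(1/3)^{2^k}<\varepsilon$ for the iteration count. Your explicit attention to keeping every iterate and every intermediate point $\xi$ inside $\mathcal{B}_{\delta_{\alpha^*}}(\varphi(\alpha^*))$ only makes precise the induction the paper carries out implicitly.
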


Let $\varepsilon>0$. Assume that we are given $u^{(0)} \in (\Rpos)^n$ such that $\|u^{(0)}-\varphi(\alpha_*)\|_\infty < \delta$ holds. In order to compute an $\varepsilon$-approximation of the positive solution $\varphi(\alpha^*)$ of (\ref{ec: sistema_A}) for $A=\alpha^*$,  we perform $N$ iterations of (\ref{ec: 1ra iteracion}) and $k_0 := \lceil
\log_2\log_3(3/4c\varepsilon)\rceil$ iterations of (\ref{ec: 2da iteracion}). From Lemmas \ref{Lema: conv 1ra iteracion} and \ref{Lema: conv 2da iteracion} we conclude that the output $u^{(N+k_0)}$ of this procedure satisfies
the condition $\|u^{(N+k_0)} - \varphi(\alpha^*)\|_\infty < \varepsilon$. Observe that the Jacobian matrix $J_{\alpha}(u)$ is tridiagonal for every $\alpha \in [\alpha_*,\alpha^*]$ and every $u \in K_{\alpha}$. Therefore, the solution of a linear system with matrix $J_{\alpha}(u)$ can be obtained with $O(n)$ flops. This implies that each iteration of both (\ref{ec: 1ra iteracion}) and (\ref{ec: 2da iteracion}) requires $O(n)$ flops. In conclusion, we have the following result.
\begin{proposition}\label{Prop: Complejidad}
Let $\varepsilon > 0$ and $u^{(0)} \in (\Rpos)^n$ with $\|u^{(0)}-\varphi(\alpha_*)\|_\infty < \delta$ be given, where $\delta$ is defined as in Remark \ref{Remark: cond suf conv Newton}. Then the output of the iteration (\ref{ec: 1ra iteracion})--(\ref{ec: 2da iteracion}) is an $\varepsilon$-approximation of the positive solution $\varphi(\alpha^*)$ of (\ref{ec: sistema_A}) for $A=\alpha^*$. This iteration can be computed with $O(N n + k_0 n)=O\big(n \log_2\log_2(1/\varepsilon)\big)$ flops.
\end{proposition}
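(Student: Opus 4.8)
The plan is to assemble the two convergence lemmas proved above with an elementary count of the arithmetic cost of one Newton step; all the analytic content has already been extracted, so the proposition is essentially bookkeeping.

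First, for correctness, I would invoke Lemma \ref{Lema: conv 1ra iteracion}: since $u^{(0)}\in(\Rpos)^n$ satisfies $\|u^{(0)}-\varphi(\alpha_*)\|_\infty<\delta$, the point $u^{(N)}$ produced by the $N$ steps of (\ref{ec: 1ra iteracion}) is an attraction point for the Newton iteration (\ref{ec: 2da iteracion}), and tracing the inductive step of that proof gives in particular $\|u^{(N)}-\varphi(\alpha^*)\|_\infty<\delta$. Feeding this bound into Lemma \ref{Lema: conv 2da iteracion} shows that (\ref{ec: 2da iteracion}) is well-defined and that after $k_0:=\lceil\log_2\log_3(3/4c\varepsilon)\rceil$ further steps the output $u^{(N+k_0)}$ satisfies $\|u^{(N+k_0)}-\varphi(\alpha^*)\|_\infty<\varepsilon$; that is, $u^{(N+k_0)}$ is the desired $\varepsilon$-approximation of $\varphi(\alpha^*)$.

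Next, for the cost, I would note that one step of either (\ref{ec: 1ra iteracion}) or (\ref{ec: 2da iteracion}) amounts to evaluating $F(\alpha,\cdot)$ and the diagonal entries $\Gamma_1,\dots,\Gamma_n$ of $J_\alpha(\cdot)$, each requiring $O(n)$ operations, followed by the solution of one linear system whose matrix $J_\alpha(u)$ is tridiagonal and invertible at the iterates (by (\ref{ec: Cota Inv(u) por J})); such a tridiagonal system is solved in $O(n)$ flops, e.g.\ by Gaussian elimination without pivoting. Hence each Newton step costs $O(n)$ flops and the whole iteration costs $O(Nn+k_0n)$ flops. It then remains to substitute orders: by Theorem \ref{Teo: Cota Nro Condicion} the condition-number bound $\kappa_1=\kappa_1(\alpha_*,\alpha^*)$ is independent of $h$, and $\delta$ depends only on $\alpha_*,\alpha^*$ and the fixed data $g_1,g_2,d$, so $N=\lceil 3\alpha^*\kappa_1/\delta\rceil+1=O(1)$ is independent of $n$; likewise $k_0=O(\log_2\log_2(1/\varepsilon))$. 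Therefore $O(Nn+k_0n)=O\big(n\log_2\log_2(1/\varepsilon)\big)$, which is the claimed bound.

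There is no genuine obstacle here: smoothness of the homotopy path, the $h$-independent condition-number bound, the tridiagonal structure of $J_\alpha$, and the basin-of-attraction estimate (Remark \ref{Remark: cond suf conv Newton}) were all obtained earlier. The one point I would double-check is that the hand-off from the first stage to the second is legitimate, i.e.\ that $u^{(N)}$ lies within the ball required by Lemma \ref{Lema: conv 2da iteracion}: this is the $k=N$ instance of the inductive invariant $\|u^{(k)}-\varphi(\alpha_k)\|_\infty<\delta$ from Lemma \ref{Lema: conv 1ra iteracion}, which is kept alive precisely because the path displacement over one partition step stays below $\delta/3$, as in (\ref{ec: cota beta(i+1)-beta(i)}), guaranteed by the choice $N=\lceil 3\alpha^*\kappa_1/\delta\rceil+1$.
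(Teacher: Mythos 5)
Your proposal is correct and follows essentially the same route as the paper: correctness by combining Lemma \ref{Lema: conv 1ra iteracion} (with the inductive invariant $\|u^{(k)}-\varphi(\alpha_k)\|_\infty<\delta$ giving the hand-off at $k=N$, since $\alpha_N=\alpha^*$) with Lemma \ref{Lema: conv 2da iteracion}, and cost by noting each Newton step reduces to an $O(n)$ evaluation plus an $O(n)$ tridiagonal solve, with $N=O(1)$ independent of $n$ and $k_0=O(\log_2\log_2(1/\varepsilon))$. No gaps; the extra detail you add on the stage-one/stage-two hand-off is exactly what the paper's argument implicitly uses.
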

Finally, we exhibit a starting point $u^{(0)} \in (\Rpos)^n$ satisfying the condition of Proposition \ref{Prop: Complejidad}. Let $\alpha_* > 0$ be a constant independent of $h$ to be determined. We study the constant
$$\delta :=  \min\{ \delta_{\alpha}: \alpha \in [\alpha_*,\alpha^*] \},$$
where
$$
\delta_\alpha :=\min\Big\{ \displaystyle\frac{g_2'\big(g^{-1}(\alpha)\big) (1 - d) \alpha} {16 \eta_\alpha (\theta^*+1)}, C_2(\alpha) \Big\},
$$
with
$$
C_2(\alpha) := G^{-1}\Big(G\Big(g^{-1}\Big({\alpha}{\widehat{C}(\alpha)}\Big)\Big) + \frac{\alpha^2}{2}\Big),
$$
$$
\widehat{C}(\alpha):= 1 + \dfrac{g_2'\big(C_1(\alpha)\big) \alpha^2}{2g_2\big(g^{-1}(\alpha)/e^M\big)G'\big(g^{-1}(\alpha)/e^M\big)},
$$
$$
C_1(\alpha) := G^{-1}\Big(G\Big(g^{-1}\Big(\frac{\alpha}{\sqrt{1-d}}\Big)\Big) + \frac{\alpha^2}{2}\Big),
$$
and $M:=g'_1(C_1(\alpha))$.

Since $\widehat{C}(\alpha) \ge 1$, we have that
\begin{eqnarray*}
\eta_\alpha &=& 2\max\{g_1''\big(2 C_2(\alpha)\big), g_2''\big(2 C_2(\alpha)\big) \alpha \}\\
           & \le &  2\max\{g_1''\big(2 C_2(\alpha)\big), g_2''\big(2 C_2(\alpha)\big) g\big(G^{-1}\big(G\big(g^{-1}\big(\alpha\widehat{C}(\alpha)\big)\big)\big)\big) \},\\
           & \le &  2\max\{g_1''\big(2 C_2(\alpha)\big), g_2''\big(2 C_2(\alpha)\big) g\big(C_2(\alpha)\big) \},
\end{eqnarray*}
As $g_1$ and $g_2$ are analytic functions in $x=0$, in a neighborhood of $0 \in \R^n$, we obtain the following estimate:
$$
\eta_\alpha  \le  2 \max\{ S_1(\alpha), S_2(\alpha)\} \big(C_2(\alpha)\big)^{p-2},
$$
where $p$ is the multiplicity of 0 as a root of $g_1$ and $S_i$ is an analytic function in $x=0$ such that $\lim_{\alpha \rightarrow 0} S_i(\alpha) \neq 0$ for $i=1,2$. Taking into account that $\alpha \in (0, \alpha^*]$ holds, we conclude that there exists a constant $\eta^* > 0$, which depends only on $\alpha^*$, with
$$
\eta_\alpha \le 2 \eta^* \big(C_2(\alpha)\big)^{p-2}
$$
for all $\alpha \in (0, \alpha^*]$. Moreover, with a similar argument we deduce that there exists a constant $\vartheta^* > 0$, which depends only on $\alpha^*$, such that
\begin{equation}\label{ec: cota1 delta}
\delta_\alpha \ge \min \bigg\{ \displaystyle\frac{\vartheta^* (1 -
d)}{16 \eta^* (\theta^* + 1) } \Big(\displaystyle\frac{
g^{-1}(\alpha)}{ C_2(\alpha)}\Big)^{p-2}, \displaystyle\frac{
C_2(\alpha)} {g^{-1}(\alpha)} \bigg\} g^{-1}(\alpha).
\end{equation}
We claim that
\begin{equation}\label{ec: limite cociente 1}
\lim_{\alpha \rightarrow 0^+} \displaystyle\frac{ C_2(\alpha)}{ g^{-1}(\alpha)} = 1^+.
\end{equation}
In fact, since we have $\widehat{C}(\alpha) \ge 1$ and $g^{-1}$ is increasing, it follows that
\begin{equation}\label{ec: limite cociente 1 AUX1}
\displaystyle\frac{ C_2(\alpha)}{ g^{-1}(\alpha)} \ge 1.
\end{equation}
On the other hand, there exist $\xi_1 \in \Big(G\big(g^{-1}\big({\alpha}{\widehat{C}(\alpha)}\big)\big), G\big(g^{-1}\big({\alpha}{\widehat{C}(\alpha)}\big)\big) + \frac{\alpha^2}{2}\Big)$ and $\xi_2 \in \big(\alpha, \alpha \widehat{C}(\alpha)\big)$ with
\begin{eqnarray}
\displaystyle\frac{ C_2(\alpha)} { g^{-1}(\alpha)} & = & \displaystyle\frac{ g^{-1}\big({\alpha}{\widehat{C}(\alpha)}\big) + (G^{-1})'(\xi_1)\frac{\alpha^2}{2}}{ g^{-1}(\alpha)} \nonumber\\
& = & \displaystyle\frac{ g^{-1}({\alpha}) + (g^{-1})'(\xi_2) \alpha \big( \widehat{C}(\alpha) -1 \big)+ (G^{-1})'(\xi_1)\frac{\alpha^2}{2}}{ g^{-1}(\alpha)} \nonumber\\
& \le & 1+ \displaystyle\frac{ \alpha \big( \widehat{C}(\alpha) -1 \big)}{ g'\big(g^{-1}(\alpha)\big) g^{-1}(\alpha)}+ \displaystyle\frac{ \alpha^2}{2 G'\big(g^{-1}\big(\alpha\widehat{C}(\alpha)\big)\big) g^{-1}(\alpha)} \nonumber\\
& \le & 1+ \displaystyle\frac{ g\big(g^{-1}(\alpha)\big) \big( \widehat{C}(\alpha) -1 \big)}{ g'\big(g^{-1}(\alpha)\big) g^{-1}(\alpha)}+ \displaystyle\frac{ \big(g\big(g^{-1}(\alpha)\big)\big)^2}{2 G'\big(g^{-1}(\alpha)\big) g^{-1}(\alpha)}.\label{ec: limite cociente 1 AUX2}
\end{eqnarray}
Since $g_1$ and $g_2$ are analytic functions in $x=0$, we see that
$$
\lim_{\alpha \rightarrow 0^+} \displaystyle\frac{ g\big(g^{-1}(\alpha)\big) \big( \widehat{C}(\alpha) -1 \big)}{ g'\big(g^{-1}(\alpha)\big) g^{-1}(\alpha)} = 0,
$$
$$
\lim_{\alpha \rightarrow 0^+} \displaystyle\frac{ \big(g\big(g^{-1}(\alpha)\big)\big)^2}{2 G'\big(g^{-1}(\alpha)\big) g^{-1}(\alpha)}=0,
$$
Combining these remarks with (\ref{ec: limite cociente 1 AUX1}) and (\ref{ec: limite cociente 1 AUX2}) we inmediately deduce (\ref{ec: limite cociente 1}).

Combining (\ref{ec: cota1 delta}) with (\ref{ec: limite cociente 1}) we conclude that there exists a constant $C^* > 0$, which depends only on $\alpha^*$, with
$$
\delta_\alpha \ge C^* g^{-1}(\alpha).
$$
Therefore,
\begin{equation}\label{ec: cota2 delta}
\delta = \min\{\delta_\alpha : \alpha \in [\alpha_*, \alpha^*]\} \ge C^* g^{-1}(1/\alpha_*).
\end{equation}
From Lemma \ref{Lema: cota sup un alpha bis} and Lemma \ref{Lema: Cota inf u1 alpha}, we have
$$
    \varphi(\alpha_*) \in [g^{-1}(\alpha_*)/e^M, C_2(\alpha_*)]^n.
$$
Furthermore, by (\ref{ec: limite cociente 1}), we deduce that
\begin{equation}\label{ec: cota3 delta}
\bigg(\frac{C_2(\alpha_*)e^M} {g^{-1}(\alpha_*)} - 1 \bigg)
\frac{g^{-1}(\alpha_*)}{e^M} \le \bigg(\frac{C_2(\alpha_*)e^M}
{g^{-1}(\alpha_*)} - 1 \bigg) g^{-1}(\alpha_*) < C^*
g^{-1}(\alpha_*).
\end{equation}
holds for $\alpha_*>0$ small enough. Combining this with (\ref{ec: cota2 delta}), we conclude that
\begin{equation}\label{ec: (u - phi) menor delta}
\|u-\varphi(\alpha_*)\|_{\infty} \le C_2(\alpha_*) - g^{-1}(\alpha_*)/e^M < \delta
\end{equation}
holds for all $u \in [g^{-1}(\alpha_*)/e^M, C_2(\alpha_*)]^n$. Thus, let $\alpha_* <
\alpha^*$ satisfy (\ref{ec: cota3 delta}). Then, for any $u^{(0)}$ in the hypercube $[g^{-1}(\alpha_*)/e^M, C_2(\alpha_*)]^n$, the inequality
$$
\|u^{(0)}-\varphi(\alpha_*)\|_{\infty}  < \delta
$$
holds. Therefore, applying Proposition \ref{Prop: Complejidad}, we obtain our main result.
\begin{theorem}\label{Teo: final estimate cost}
Let $\varepsilon\!>\!0$ be given. Then we can compute an $\varepsilon$-approximation of the positive
solution of (\ref{ec: sistema_A}) for $A=\alpha^*$ with $O\big(n\log_2\log_2(1/\varepsilon)\big)$ flops.
\end{theorem}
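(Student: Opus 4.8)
The plan is to assemble the theorem from Proposition~\ref{Prop: Complejidad} together with the explicit construction of an admissible starting point described just above it. First I would fix $\alpha_* \in (0,\alpha^*)$ small enough that the inequality (\ref{ec: cota3 delta}) holds; by the limit computation (\ref{ec: limite cociente 1}) such a choice exists, and it depends only on $\alpha^*$ (in particular, it is independent of $h$). With $\alpha_*$ fixed, I would take as starting point any $u^{(0)}$ in the hypercube $[g^{-1}(\alpha_*)/e^M, C_2(\alpha_*)]^n$ --- for instance the corner all of whose coordinates equal $g^{-1}(\alpha_*)/e^M$ --- which can be written down in $O(n)$ flops.

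Next I would invoke Lemma~\ref{Lema: cota sup un alpha bis} and Lemma~\ref{Lema: Cota inf u1 alpha}, which place $\varphi(\alpha_*)$ in the same hypercube, so that $\|u^{(0)}-\varphi(\alpha_*)\|_\infty \le C_2(\alpha_*) - g^{-1}(\alpha_*)/e^M$. Combining this with (\ref{ec: cota2 delta}) and (\ref{ec: cota3 delta}) gives $\|u^{(0)}-\varphi(\alpha_*)\|_\infty < \delta$, which is precisely the hypothesis of Proposition~\ref{Prop: Complejidad}. Applying that proposition with the given $\varepsilon$ produces, after $N$ steps of the homotopy iteration (\ref{ec: 1ra iteracion}) and $k_0 := \lceil \log_2\log_3(3/4c\varepsilon)\rceil$ steps of the Newton iteration (\ref{ec: 2da iteracion}), an output $u^{(N+k_0)}$ satisfying $\|u^{(N+k_0)}-\varphi(\alpha^*)\|_\infty < \varepsilon$. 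Since the Jacobian $J_\alpha(u)$ is tridiagonal, each step of (\ref{ec: 1ra iteracion}) or (\ref{ec: 2da iteracion}) costs $O(n)$ flops, and since $N = O(1)$ the total cost is $O(Nn + k_0 n) = O\big(n\log_2\log_2(1/\varepsilon)\big)$.

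The point that really needs attention --- and which the preceding sections were designed to settle --- is the uniformity of all the constants in $h$: one must check that $\alpha_*$, $\delta$, $\kappa_1$, $c$, and hence $N = \lceil 3\alpha^*\kappa_1/\delta \rceil + 1$, can be chosen depending only on $\alpha^*$ and the fixed data $g_1, g_2, d$, never on the mesh parameter $h = 1/(n-1)$. This is exactly what Theorem~\ref{Teo: Cota Nro Condicion} (for $\kappa_1$), Lemma~\ref{Lema: cota sup un alpha bis} (for the box radius $C_2(\alpha)$), and the asymptotic estimates (\ref{ec: cota1 delta})--(\ref{ec: cota2 delta}) (for $\delta$) provide. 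Once these $h$-independence statements are in place, the flop count is immediate and no further estimates are required.
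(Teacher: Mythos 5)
Your proposal is correct and follows essentially the same route as the paper: choose $\alpha_*$ small enough (via the limit (\ref{ec: limite cociente 1})) so that (\ref{ec: cota3 delta}) holds, take any starting point in the hypercube $[g^{-1}(\alpha_*)/e^M, C_2(\alpha_*)]^n$ containing $\varphi(\alpha_*)$, verify $\|u^{(0)}-\varphi(\alpha_*)\|_\infty<\delta$ using (\ref{ec: cota2 delta})--(\ref{ec: cota3 delta}), and conclude by Proposition \ref{Prop: Complejidad}. Your emphasis on the $h$-independence of $\alpha_*$, $\delta$, $\kappa_1$, $c$ and $N$ is exactly the point the paper's preceding estimates are designed to secure, so no further argument is needed.
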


\newcommand{\etalchar}[1]{$^{#1}$}
\providecommand{\bysame}{\leavevmode\hbox to3em{\hrulefill}\thinspace}
\providecommand{\MR}{\relax\ifhmode\unskip\space\fi MR }
\providecommand{\MRhref}[2]{%
  \href{http://www.ams.org/mathscinet-getitem?mr=#1}{#2}
}
\providecommand{\href}[2]{#2}

\end{document}